\providecommand*{\input@path}{}
\g@addto@macro\input@path{{include/}{../include/}{.}}
  \title{$\mathcal{V}$-Polyhedral Disjunctive Cuts}
  \author{Egon Balas \and Aleksandr M. Kazachkov}
  \date{February 18, 2024}
\providecommand{\bibDirName}{.}  %
\providecommand{\mainDir}{.}  %
\newcommand{\biblio}{\bibliographystyle{plainnat}\bibliography{\mainDir/\bibDirName/akazachk}}  %
\newcommand{\rlaptext}[1]{\rlap{\text{#1}}}
\DeclareMathAlphabet{\mathpzc}{OT1}{pzc}{m}{it}
\newcommand{\xangle}{-20}
\newcommand{\yangle}{45}
\newcommand{\zangle}{90}
\newcommand{\xlength}{1}
\newcommand{\ylength}{1}
\newcommand{\zlength}{1}
\pgfmathsetmacro{\xx}{\xlength*cos(\xangle)}
\pgfmathsetmacro{\xy}{\xlength*sin(\xangle)}
\pgfmathsetmacro{\yx}{\ylength*cos(\yangle)}
\pgfmathsetmacro{\yy}{\ylength*sin(\yangle)}
\pgfmathsetmacro{\zx}{\zlength*cos(\zangle)}
\pgfmathsetmacro{\zy}{\zlength*sin(\zangle)}
\newcommand{\xX}{\xx}
\newcommand{\xY}{\xy}
\newcommand{\yX}{\yx}
\newcommand{\yY}{\yy}
\newcommand{\zX}{\zx}
\newcommand{\zY}{\zy}
\algnewcommand\algorithmicinput{\textbf{Input:}}
\algnewcommand\Input{\item[\algorithmicinput]}
\algnewcommand\algorithmicinit{\textbf{Initialize:}}
\algnewcommand\Initialize{\item[\algorithmicinit]}
\algnewcommand\algorithmicoutput{\textbf{Output:}}
\algnewcommand\Output{\item[\algorithmicoutput]}
\algnewcommand\algorithmicpreprocessing{\textbf{Preprocessing:}}
\algnewcommand\Preprocessing{\item[\algorithmicpreprocessing]}
\let\OldStatex\Statex
\renewcommand{\Statex}[1][3]{%
  \setlength\@tempdima{\algorithmicindent}%
  {{\parfillskip0pt\par}}\OldStatex\hskip\dimexpr#1\@tempdima\relax}
\newcommand{\suchthat}{: }
\newcommand*{\defeq}{\mathrel{\vcenter{\baselineskip0.5ex \lineskiplimit0pt
                     \hbox{\footnotesize.}\hbox{\footnotesize.}}}%
                     =}
\newcommand*{\defas}{\defeq}
\DeclareMathOperator*{\argmin}{arg\,min}
\DeclareMathOperator*{\conv}{conv}
\DeclareMathOperator*{\cone}{cone}
\DeclareMathOperator*{\cl}{cl}
\newcommand{\abs}[1]{\lvert#1\rvert}
\newcommand{\card}[1]{\lvert #1 \rvert}
\newcommand{\norm}[1]{\lVert #1 \rVert}
\newcommand{\ceil}[1]{\left\lceil #1 \right\rceil}
\newcommand{\floor}[1]{\left\lfloor #1 \right\rfloor}
\newcommand{\field}[1]{\mathbbm{#1}}
\newcommand{\reals}{\field{R}}
\newcommand{\integers}{\field{Z}}
\newcommand{\R}{\reals}
\newcommand{\Z}{\integers}
\newcommand{\eps}{\epsilon}
\newcommand{\T}{\mathsf{\scriptscriptstyle T}} %
\def\ifmonospace{\ifdim\fontdimen3\font=0pt }
\def\Cpp{%
\ifmonospace%
    C++%
\else%
    C\nolinebreak[4]\raisebox{0.5ex}{\tiny\textbf{++}}%
\fi%
\spacefactor1000 }
\newcommand{\solver}[1]{#1}
\newcommand{\Gurobi}{\solver{Gurobi}}
\newcommand\NB{\mathcal{N}}
\newcommand{\pointset}{\mathcal{P}}
\newcommand{\rayset}{\mathcal{R}}
\newcommand{\opt}[1]{\bar{#1}}
\newcommand{\lpopt}{\opt x}
\newcommand{\intvars}{\mathcal{I}}
\newcommand{\PI}{P_{I}}
\newtheorem{theorem}{Theorem}
\newtheorem{corollary}[theorem]{Corollary}
\newtheorem{proposition}[theorem]{Proposition}
\newtheorem{definition}[theorem]{Definition}
\newtheorem*{theorem*}{Theorem}
\newtheorem*{corollary*}{Corollary}
\newtheorem*{lemma*}{Lemma}
\newtheorem*{proposition*}{Proposition}
\newtheorem*{claim*}{Claim}
\newtheorem*{example*}{Example}
\newtheorem*{remark*}{Remark}
\newtheorem*{observation*}{Observation}
\newtheorem*{definition*}{Definition}
\newtheorem*{conjecture*}{Conjecture}
\newcommand{\nbspace}[1]{\tilde{#1}}
\newcommand{\epsortho}{\epsilon_{\text{orth}}}
\def\PRLPeq{PRLP\textsuperscript{=}}
\renewcommand{\PI}{\ref{PI}}
\newcommand{\Pt}{\ref{Pt}}
\newcommand{\numgapinst}{332}
\newcommand{\numVgoodinst}{116}
\newcommand{\numBinaryInstances}{65}
\newcommand{\numtimeinst}{282}
\begin{document}
\maketitle

\begin{abstract}
We introduce \emph{$\mathcal{V}$-polyhedral disjunctive cuts} (VPCs) for generating valid inequalities from general disjunctions. Cuts are critical to integer programming solvers, but the benefit from many families is only realized when the cuts are applied recursively, causing numerical instability and ``tailing off'' of cut strength after several rounds. To mitigate these difficulties, the VPC framework offers a practical method for generating strong cuts without resorting to recursion. The framework starts with a disjunction whose terms partition the feasible region into smaller subproblems, then obtains a collection of points and rays from the disjunctive terms, from which we build a linear program whose feasible solutions correspond to valid disjunctive cuts. Though a na\"{i}ve implementation would result in an exponentially-sized optimization problem, we show how to efficiently construct this linear program, such that it is much smaller than the one from the alternative higher-dimensional cut-generating linear program. This enables us to test strong multiterm disjunctions that arise from the leaf nodes of a partial branch-and-bound tree. In addition to proving useful theoretical properties of the cuts, we evaluate their performance computationally through an implementation in the open-source COIN-OR framework. In the results, VPCs from a strong disjunction significantly improve the gap closed compared to existing cuts in solvers, and they also decrease some instances' solving time when used with branch and bound.
\end{abstract}

\section{Introduction}
\label{sec:intro}

This paper presents a new framework for generating disjunctive \emph{cutting planes}, or \emph{cuts}, in which a large number of strong cuts can be generated efficiently and nonrecursively.
We are motivated by a crucial drawback of many existing cut techniques, in their reliance on recursion to reach strong cuts, i.e., by computing cuts from previously-derived ones.
This can result in numerical issues (e.g., due to compounding inaccuracies) and a ``tailing off'' of the strength of the cuts in later rounds~\cite{BelFis75,CooDasFukGoy09,BalFisZan10,ZanFisBal11,MilRalSte18_exploring-numerics-branch-and-cut}.
Our primary computational innovation to circumvent recursion is an efficient use of the \emph{$\mathcal{V}$-polyhedral} perspective, which has been avoided in the past because, used na\"{i}vely, it yields intractable representations of instances.
The framework we introduce overcomes this hurdle and facilitates a cut generation scheme formulated in the original dimension of the problem, as opposed to a commonly-used higher-dimensional representation.
This enables us to test large multiterm disjunctions arising from partial branch-and-bound trees, resulting in cuts that are strong (compared to cuts currently deployed in solvers) 
and have the potential to reduce solving time.

One way to derive a general-purpose cut is from a \emph{disjunction}, which we define precisely later in this section.
\citet{Balas79} introduced the prevailing paradigm for disjunctive cuts, via a \emph{cut-generating linear program} (CGLP) with additional variables beyond those defining the cut.
The CGLP is too expensive to work with, despite being polynomially-sized (in the size of the original instance and disjunction).
As a result, solvers only use this idea for the simplest---split---disjunctions~\cite{BalCerCor93,BalCerCor96}.
Successfully applying such \emph{lift-and-project cuts} (L\&PCs) for splits hinges on 
the ability to compute the cuts in the space of the original linear program, which is possible due to a correspondence of basic solutions of the extended formulation to those in the lower-dimensional space~\cite{BalPer03,BalBon09,Bonami12}.
However, although such a correspondence exists for split disjunctions, it does not necessarily exist for more general ones~\cite{AndCorLi05,BalKis16}.
Hence, efficiently producing cuts from stronger disjunctions calls for a different perspective.

Our starting point for developing such a computationally-efficient procedure is a \emph{polar} representation of polyhedra: rather than the inequality description~\!\!---which is how an instance is usually provided and what underpins the CGLP---every polyhedron can also be equivalently represented by its $\mathcal{V}$-polyhedral description, i.e., through a collection of points and rays.
Cuts can be generated by inputting these points and rays into a {``point-ray'' linear program} with the same number of variables as that of the original problem.
The drawback of the point-ray approach is that the number of rows of the linear program will typically be exponentially large in the size of an inequality formulation, and these rows are necessary in the sense that dropping them may result in invalid cuts.
For this reason, prior work that has adopted the $\mathcal{V}$-polyhedral perspective resorts to row generation to guarantee validity~\cite{PerBal01,LouPoiSal15}.

We show how to avoid the expensive row generation step via a properly chosen compact collection of points and rays that we prove suffices to produce valid cuts, albeit a subset of the entire pool of possible disjunctive cuts.
We prove conditions under which the cuts obtained from our $\mathcal{V}$-polyhedral relaxation define facets for the disjunctive hull;
for example, for a split disjunction, every cut we generate is facet-defining when there is no primal degeneracy.

Most existing cut-generation procedures are applied to shallow disjunctions, such as split disjunctions.
Typically, strong cuts are attained by recursive applications, deriving cuts from other cuts, rather than only from the inequalities of the original system,
and several shallow disjunctions are considered in parallel.
In contrast, the efficiency of our technique enables testing larger disjunctions (up to 64 terms in our experiments).
Our disjunctions are derived from generating a partial branch-and-bound tree for each instance.
Theoretically, disjunctive cuts from such trees can solve the original integer programming problem without recursion~\cite{Jorg08_disj-cut-finiteness,CheKucSen11_finite-disjunctive-programming-characterizations}.

Another difference in our approach from usual cut generation paradigms is our choice of objective functions for the point-ray linear program.
Instead of producing cuts that are maximally violated by an optimal solution to the linear programming relaxation,
we aim for a diverse set of cuts with wider coverage of the disjunction: we explicitly aim for inequalities that are supporting on different points and rays.
In the process, we constructively prove that cuts from our relaxed point-ray collection can provide the same objective value as from using the complete (exponential-sized) point-ray collection.
Further, we develop and refine an objective selection scheme, which is a dynamic cut selection procedure, in that the next target direction depends on what cuts have been generated before.
Our theory also aims to avoid infeasibility, unboundedness, and duplicate cuts while solving the point-ray linear program, as these situations impose computational burden without generating new inequalities.

Supporting the theoretical results, our computational experiments indicate that applying our cuts, which we call \emph{$\mathcal{V}$-polyhedral (disjunctive) cuts} (VPCs), significantly improves the percent integrality gap closed over the baseline of \emph{Gomory mixed-integer cuts} (GMICs)~\cite{Gomory58_fractional-cut} and the default cuts for the leading commercial solver \Gurobi{}~\cite{Gurobi10.0.3}.
To complement our empirical evaluation of the strength of VPCs through the integrality gap they close, we also test the effect of VPCs on the solving time of \Gurobi{}.
The branch-and-bound results do not show an improvement in solver performance when using VPCs on average,
but we observe a benefit for a number of instances.
Our investigation leaves open the question of how to identify instances, hyperparameter settings, or solver modifications to take advantage of stronger disjunctive cuts in practice.

\paragraph{Paper organization.}
We describe properties of our cut generation scheme in Section~\ref{sec:PRLP}, such as conditions under which we obtain facet-defining inequalities for the disjunctive hull and theoretical results that benefit our implementation.
Section~\ref{sec:disj} specifies the disjunctions used in our experiments.
Section~\ref{sec:obj-theory} provides theory for the objective directions we consider.
The computational results in Section~\ref{sec:computation} indicate that a vital and challenging question that remains outstanding is efficiently selecting multiterm disjunctions that yield good cuts.

\paragraph{Notation.}
Let \ref{P} denote a polyhedron described by a set of $m$ inequalities:
  \begin{equation}
    P \defas \{x \in \R^n \suchthat A x \ge b\} \rlap{.}
    \tag{\ensuremath{P}}\label{P}
  \end{equation}
Let $\PI$ be the integer-feasible region:
  \begin{equation}
    P_I \defas \{x \in \ref{P} \suchthat x_j \in \Z \text{ for all $j \in \intvars$}\} \rlap{,}
    \tag{\ensuremath{P_I}}\label{PI}
  \end{equation}
where $\intvars \subseteq [n] \defeq \{1,\ldots,n\}$ is the index set of the integer-restricted variables.
We assume that \ref{P} is full dimensional and {pointed},
all data is rational,
and all variable bounds are subsumed by $Ax \ge b$.%
\footnote{The full-dimensionality assumption is made for ease of exposition and the subsequent results generally apply, with minor modifications, when this assumption is relaxed.}
For a given $c \in \R^n$, our goal is to solve the \emph{mixed-integer program}
  \begin{equation}
    \min_{x \in \ref{PI}} c^\T x \rlap{.} \tag{IP}\label{IP}
  \end{equation}
We start by solving the \emph{linear programming relaxation} of \eqref{IP}, obtained by removing the integrality restrictions on the variables:
  \begin{equation}
    \min_{x \in \ref{P}} c^\T x \rlap{.} \tag{LP}\label{LP}
  \end{equation}
This yields an optimal solution, $\lpopt$, which we assume does not belong to \ref{PI}.
To proceed, integer programming solvers next tighten the relaxation \ref{P} by the addition of inequalities that are valid for \ref{PI} but remove some of \ref{P}.
One way these cuts can be generated is via a valid \emph{disjunction}, which creates a partition such that \ref{PI} is contained in the union of the disjunctive terms.
Concretely, a disjunction takes the form
  \begin{equation}
    \bigvee_{t \in \mathcal{T}} \{x \in \R^n \suchthat D^t x \ge D^t_0\} \rlap{.}
    \label{eq:disj}
  \end{equation}
where $\mathcal{T}$ is a finite index set.
We denote \emph{disjunctive term} $t \in \mathcal{T}$ by
  \begin{equation}
    P^t \defas \{x \in \ref{P} \suchthat D^t x \ge D^t_0\} \rlap{.}
    \tag{\ensuremath{P^t}}\label{Pt}
  \end{equation}
Let $P_D \defas \cl\conv(\cup_{t \in \mathcal{T}} \Pt)$ be the \emph{disjunctive hull}, the \emph{closed convex hull} of the elements of \ref{P} satisfying the disjunction.
We assume the disjunction satisfies $\PI \subseteq P_D$ and $\lpopt \notin P_D$.

\section{Point-ray linear program}
\label{sec:PRLP}

Let $\pointset$ and $\rayset$ denote sets of points and rays in $\R^n$.%
\footnote{%
In this paper, \emph{rays} refer to either (1) a direction of unboundedness, or (2) an ``extended edge'' of a polyhedron, i.e., an edge extended (to infinity) from one of its endpoints.
}
Define the 
  \emph{point-ray linear program} (PRLP), 
taking as an input the point-ray collection $(\pointset, \rayset)$ and an objective direction $w \in \R^n$, as follows:
  \begin{align*}
    \min_{\alpha,\beta} \quad &\alpha^\T w \\ \label{PRLP} \tag{PRLP}
    &\begin{aligned}
      \alpha^\T p &\ge \beta &\quad& \rlaptext{for all $p \in \pointset$} \\
      \alpha^\T r &\ge 0 &\quad& \rlaptext{for all $r \in \rayset$.}
    \end{aligned}
  \end{align*}
Every feasible solution $(\alpha,\beta)$ to \eqref{PRLP} is an inequality $\alpha^\T x \ge \beta$; these are what we refer to as VPCs.
Define the \emph{point-ray hull} as $\conv(\pointset) + \cone(\rayset)$.

We immediately address the nature of the cuts obtainable from \eqref{PRLP}.
Theorem~\ref{thm:PRLP-facets} shows that the extreme ray solutions to \eqref{PRLP} correspond to facet-defining inequalities for the point-ray hull (which, in accordance with convention, we simply refer to as \emph{facets} of the point-ray hull).

\begin{theorem}
\label{thm:PRLP-facets}
  The inequality $\alpha^\T x \ge \beta$ is valid for $\conv(\pointset) + \cone(\rayset)$
  if and only if
  $(\alpha,\beta)$ is a feasible solution to \eqref{PRLP}. %
  The inequality defines a facet of the point-ray hull if and only if the solution $(\alpha,\beta)$ is an extreme ray of \eqref{PRLP}.
\end{theorem}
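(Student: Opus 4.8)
The plan is to establish the two biconditionals in turn. The first, validity $\Leftrightarrow$ feasibility, is elementary. Write a generic element of the point-ray hull $Q \defeq \conv(\pointset) + \cone(\rayset)$ as $x = \sum_{p \in \pointset} \lambda_p\, p + \sum_{r \in \rayset} \mu_r\, r$ with $\lambda,\mu \ge 0$ and $\sum_p \lambda_p = 1$. For the ``if'' direction, if $(\alpha,\beta)$ is feasible then multiplying $\alpha^\T p \ge \beta$ by $\lambda_p$ and $\alpha^\T r \ge 0$ by $\mu_r$ and summing (using $\sum_p \lambda_p = 1$) yields $\alpha^\T x \ge \beta$. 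For ``only if,'' I would test an assumed-valid inequality on the generators: each $p \in \pointset$ lies in $Q$, forcing $\alpha^\T p \ge \beta$; and for any $r \in \rayset$ and fixed $p_0 \in \pointset$, the half-line $p_0 + t r$ ($t \ge 0$) lies in $Q$, so $\alpha^\T p_0 + t\,\alpha^\T r \ge \beta$ for all $t \ge 0$ forces $\alpha^\T r \ge 0$. (This direction uses $\pointset \neq \emptyset$, which I take as a standing assumption.)

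For the second equivalence, the starting point is that the constraints of \eqref{PRLP} are homogeneous in $(\alpha,\beta)$, so its feasible region $F$ is a polyhedral cone, and by the first part $F$ is exactly the cone of valid inequalities for $Q$. For feasible $(\alpha,\beta)$ introduce the tight sets $\pointset_{=} \defeq \{p \in \pointset : \alpha^\T p = \beta\}$ and $\rayset_{=} \defeq \{r \in \rayset : \alpha^\T r = 0\}$. Expanding $\alpha^\T x - \beta$ over the representation above as a sum of nonnegative terms shows that equality forces every positively-weighted generator to be tight, so the face of $Q$ exposed by $(\alpha,\beta)$ is exactly $\conv(\pointset_{=}) + \cone(\rayset_{=})$.

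The crux is a dimension count tying the two sides together. Setting $V(S,T) \defeq \cspan(\{(p,1) : p \in S\} \cup \{(r,0) : r \in T\}) \subseteq \R^{n+1}$, I would first show that $\dim(\conv(S) + \cone(T)) = \dim V(S,T) - 1$ whenever $S \ne \emptyset$: fixing a base point $p_0 \in S$, the affine hull is $p_0 + \cspan(\{p - p_0 : p \in S\} \cup T)$, and $(p_0,1)$ contributes exactly one further dimension in the last coordinate. Applying this to $(\pointset,\rayset)$ and to $(\pointset_{=},\rayset_{=})$ turns the facet condition $\dim(\text{exposed face}) = \dim Q - 1$ into $\dim V(\pointset_{=},\rayset_{=}) = \dim V(\pointset,\rayset) - 1$. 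On the cut side, the constraints of \eqref{PRLP} active at $(\alpha,\beta)$ have normals $(p,-1)$ for $p \in \pointset_{=}$ and $(r,0)$ for $r \in \rayset_{=}$; since flipping the last coordinate is a linear isomorphism, their rank equals $\dim V(\pointset_{=},\rayset_{=})$. In the full-dimensional case $F$ is pointed, and $(\alpha,\beta)$ spans an extreme ray of $F$ precisely when this rank is $n$ (i.e. the minimal face through $(\alpha,\beta)$ is a line); since then $\dim V(\pointset,\rayset) = n+1$, the facet condition reads $\dim V(\pointset_{=},\rayset_{=}) = n$, matching the extreme-ray condition and closing the equivalence.

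The main obstacle is the bookkeeping in the degenerate cases, where the literal statement needs qualification. The trivial inequality $\alpha = 0$ (for instance $0 \ge \beta$ with $\beta < 0$) is always feasible and, when $\rayset$ linearly spans $\R^n$, even spans an extreme ray of $F$, yet it exposes the empty face and is not a facet; this is exactly the ``face at infinity'' of the homogenization, so I would exclude it by restricting to proper inequalities $\alpha \ne 0$. Moreover, when $Q$ is not full-dimensional the cone $F$ acquires a lineality space spanned by the equalities of $\operatorname{aff} Q$, so $F$ has no one-dimensional faces; there the bijection must be read modulo this lineality, or recovered by first restricting everything to $\operatorname{aff} Q$ and reducing to the full-dimensional case. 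Pinning down that these are the only obstructions, and that away from them the active-set/dimension correspondence is exact, is the step I would treat most carefully.
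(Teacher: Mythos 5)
Your proof is correct, and its first half (validity $\Leftrightarrow$ feasibility) is exactly the paper's argument: nonnegative combination of the constraints in one direction, testing the generators $p$ and the half-lines $p_0 + t r$ in the other. Where you diverge is the second equivalence. The paper's proof is a one-line certificate argument: an extreme ray of \eqref{PRLP} has $n$ linearly independent active constraints, these active constraints name $n$ affinely independent tight points and rays, and those certify a facet; the converse is dismissed as ``follows from the definitions.'' You instead run a full homogenization/dimension count: exposed face $=\conv(\pointset_{=})+\cone(\rayset_{=})$, the identity $\dim(\conv(S)+\cone(T)) = \dim V(S,T)-1$, and the observation that the rank of the active constraints equals $\dim V(\pointset_{=},\rayset_{=})$, so that ``rank $n$'' and ``dimension $\dim Q - 1$'' become the same condition. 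This is the same underlying correspondence the paper invokes, but made into an actual two-way proof rather than a sketch; in particular your version genuinely proves the facet $\Rightarrow$ extreme ray direction, which the paper does not.

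Your caveats are not mere bookkeeping: they expose a real (if minor) inaccuracy in the theorem as literally stated, which the paper's proof glosses over. Take $\pointset = \{0\}$ and $\rayset = \{e_1,\dots,e_n\}$, so the point-ray hull is the nonnegative orthant. Then $(\alpha,\beta) = (0,-1)$ is an extreme ray of \eqref{PRLP} (its active constraints $\alpha_i \ge 0$ have rank $n$), yet the inequality $0 \ge -1$ exposes the empty face, not a facet. Note also that the paper's certificate argument silently fails here: the $n$ tight rays are affinely independent, but tight rays only witness a facet when anchored at a tight point, and $\pointset_{=}$ is empty. Your exclusion of $\alpha = 0$, and your remark that the correspondence must be read modulo the lineality space when the hull is not full dimensional, are exactly the hypotheses under which the statement is true; in the paper these issues are implicitly suppressed by the later normalization ($\beta$ fixed positive, cuts required to separate $\lpopt$) and the standing full-dimensionality assumption, but the theorem itself is stated without them.
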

\begin{proof}
  Every point $\hat{x} \in \conv(\pointset) + \cone(\rayset)$ can be expressed as a convex combination of the elements in $(\pointset,\rayset)$.
  For any inequality $\alpha^\T x \ge \beta$ satisfied by all of these points and rays, it follows that $\alpha^\T \hat{x} \ge \beta$.
  Every extreme ray $(\alpha,\beta)$ to \eqref{PRLP} has the additional property that $n$ affinely independent points and rays from the point-ray collection satisfy $\alpha^\T x = \beta$, which means that the inequality defines a facet of $\conv(\pointset) + \cone(\rayset)$.
  The reverse directions follow, respectively, from the definitions of valid and facet-defining inequality.
\end{proof}

There are two primary challenges encountered with the PRLP.
First, the point-ray collection needs to be chosen judiciously, to balance the strength of the obtainable VPCs with the time required to generate them, while ensuring validity of the cuts.
Second, it is critically important to intelligently select the objective directions $w$ for \eqref{PRLP}.
The next sections are devoted to addressing these questions.
We first detail properties of the feasible region of \eqref{PRLP}.
In particular, we 
describe how we normalize the PRLP,
prove necessary and sufficient conditions for VPCs to be valid cuts for $\PI$,
and discuss the conditions under which VPCs are facet-defining for the disjunctive hull.

\subsection{Normalization of the PRLP}
\label{sec:normalization}

As presented, the linear program \eqref{PRLP} does not have a finite objective value for any nonzero objective direction $w$:
if $(\alpha,\beta)$ is feasible, then so is $(\lambda \alpha, \lambda \beta)$ for any nonnegative $\lambda \in \R$.
In theory, each ray $(\alpha,\beta)$ yields a valid cut; 
however, in practice, the ray returned by a solver might not be extreme, which may correspond to a weak cut.
This is the same well-documented issue that arises with the CGLP.
To resolve this, a \emph{normalization} is applied, truncating the cone defining the feasible region of \eqref{PRLP}.
The choice of normalization can have a significant effect on the cuts that are ultimately generated~\cite{FisLodTra11,LodTanVie23_disj-cuts-mixed-integer-conic}.

One solution is to constrain the magnitude of the cut coefficients, e.g., via (a linearization of) $\sum_{i = 1}^n \abs{\alpha_i} \le 1$~\cite{BalCerCor93}.
This normalization has the undesirable characteristic that it may add extreme points to the feasible region of \eqref{PRLP} that do not correspond to facets of the disjunctive hull.

A second normalization proposed by \citet{BalPer02}---also used by \citet{PerBal01} and \citet*{LouPoiSal15}---is to add the constraint 
  $\alpha^\T (p - \lpopt) = 1$,
for some $p \in P_D$.
This idea has been studied by \citet{Serra20_reformulating} and \citet{ConWol19}, when the role of the objective and normalization is swapped.
This guarantees that \eqref{PRLP} is always bounded and has other nice properties,
but depends on a good choice of $p$.

We take a third approach for normalizing \eqref{PRLP} of fixing $\beta$ to a constant value.
As observed by \citet{BalMar13}, it suffices to consider only three values for $\beta$: $\{-1,0,+1\}$.
This might indicate that one would need to solve three linear programs to generate cuts with such a normalization, as discussed by \citet*{LouPoiSal15}.
We avoid this issue by formulating \eqref{PRLP} in the \emph{nonbasic space} relative to $\lpopt$.
In this space, the \eqref{LP} optimal solution $\lpopt$ is the origin.
As a result, if we are looking for inequalities that are violated by $\lpopt$, it suffices to fix $\beta = 1$.
Another advantage of working in the nonbasic space is that \eqref{PRLP} may be much sparser than if it were formulated in the structural space of variables.
This is because the number of nonzero components in every row roughly corresponds to the number of simplex pivots from $\lpopt$ to the point or ray for that row.
When we normalize with $\beta = 1$, every basic feasible solution to \eqref{PRLP} corresponds to an inequality violated by $\lpopt$.

One limitation imposed by our normalization is that the cuts we generate are only those that remove $\lpopt$.
It is clearly necessary to remove this point in order to make progress beyond the linear programming relaxation towards an integer-feasible solution, but it is a ``transient'' point, in the sense that it is no longer truly important after the first cut that removes it
(see, e.g., the discussion by \citet{CadLem13}).
This limitation is easily avoided by generating cuts with $\beta = 0$ or $-1$, but effectively implementing this idea requires an independent in-depth investigation that we leave to future research.

\subsection{Proper point-ray collections: ensuring valid inequalities}

From Theorem~\ref{thm:PRLP-facets}, we have that feasible solutions to \eqref{PRLP} are valid for the point-ray hull.
We further need to ensure that VPCs are valid for \PI.
The point-ray collections with this guarantee will be called \emph{proper}, adapting the definition from \citet{KazNadBalMar20} within the \emph{generalized intersection cut} paradigm~\cite{BalMar13}, which uses a linear program analogous to the PRLP but derives points and rays via a different procedure.
We remove a dependency on $\lpopt$ present in the prior definition, which enables our framework to produce arbitrary disjunctive inequalities that do not necessarily cut $\lpopt$ and can be stronger than \emph{any} intersection cut obtainable from the same disjunction~\cite{BalKis16}.
This modification also simplifies the characterization of proper point-ray collections.

\begin{definition} \label{defn:globally-proper}
  The point-ray collection $(\pointset, \rayset)$
  is called \emph{proper} if $\alpha^\T x \ge \beta$ is valid for $\PI$ whenever
  $(\alpha,\beta)$ is feasible to \eqref{PRLP}.
\end{definition}

As a direct corollary to Theorem~\ref{thm:PRLP-facets}, we obtain a necessary and sufficient condition for a point-ray collection to be proper.

\begin{corollary}
\label{cor:PRLP-globallyproper}
  A point-ray collection $(\pointset,\rayset)$ is proper if and only if $\PI \subseteq \conv(\pointset) + \cone(\rayset)$.
\end{corollary}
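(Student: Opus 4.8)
The plan is to establish the two directions of the biconditional separately, with Theorem~\ref{thm:PRLP-facets} carrying most of the weight and the separating hyperplane theorem supplying the only additional ingredient. Throughout, write $Q \defas \conv(\pointset) + \cone(\rayset)$ for the point-ray hull.

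For the ``if'' direction, I would assume $\PI \subseteq Q$ and let $(\alpha,\beta)$ be any feasible solution to \eqref{PRLP}. By the first equivalence in Theorem~\ref{thm:PRLP-facets}, feasibility means precisely that $\alpha^\T x \ge \beta$ is valid for $Q$; since $\PI \subseteq Q$, the same inequality is valid for $\PI$. As this holds for every feasible $(\alpha,\beta)$, the collection is proper in the sense of Definition~\ref{defn:globally-proper}. This direction is routine.

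For the ``only if'' direction I would argue by contraposition: assume $\PI \not\subseteq Q$ and exhibit a feasible PRLP solution that witnesses non-properness. Pick $\hat{x} \in \PI \setminus Q$. Because $\pointset$ and $\rayset$ are finite, $Q$ is a closed, convex polyhedron, so the separating hyperplane theorem yields $(\alpha,\beta)$ with $\alpha^\T x \ge \beta$ for all $x \in Q$ but $\alpha^\T \hat{x} < \beta$. The first property says the inequality $\alpha^\T x \ge \beta$ is valid for $Q$, hence by Theorem~\ref{thm:PRLP-facets} the pair $(\alpha,\beta)$ is feasible to \eqref{PRLP}; the second property says this feasible cut is violated by the integer-feasible point $\hat{x}$, so the collection is not proper. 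Contrapositively, properness forces $\PI \subseteq Q$.

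The only real subtlety — and the step I would be most careful about — is the invocation of separation: it requires $Q$ to be closed, which is where finiteness of $(\pointset,\rayset)$ (equivalently, polyhedrality of the point-ray hull) enters, and it requires translating the recession-cone part of validity, namely $\alpha^\T r \ge 0$ for $r \in \rayset$, into PRLP feasibility. Both are handled cleanly by Theorem~\ref{thm:PRLP-facets}, which already encodes ``valid for $Q$'' as ``feasible to \eqref{PRLP},'' so no separate homogenization argument is needed. The degenerate cases, $Q = \emptyset$ or $\PI = \emptyset$, can be checked directly and cause no trouble.
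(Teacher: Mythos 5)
Your proof is correct and follows essentially the same route as the paper: sufficiency via Theorem~\ref{thm:PRLP-facets} (feasible PRLP solutions are valid for the point-ray hull, hence for $\PI$), and necessity by separation, exhibiting a feasible PRLP solution that cuts a point of $\PI \setminus (\conv(\pointset) + \cone(\rayset))$. The paper phrases the separating inequality as an extreme ray of \eqref{PRLP} rather than invoking the separating hyperplane theorem for the closed polyhedron directly, but this is the same argument.
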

\begin{proof}
  Sufficiency of the condition follows from Theorem~\ref{thm:PRLP-facets}: every feasible solution to \eqref{PRLP} is valid for $\conv(\pointset) + \cone(\rayset)$ and hence for $\PI$.
  Necessity is similarly evident as, otherwise, there exists an extreme ray of \eqref{PRLP} that cuts a point in $\PI$.
\end{proof}

Of course, we do not work with the integer hull directly.
The intermediary is the disjunctive hull.
The next corollary is a key result for the development of a practical procedure working with points and rays.
It states that as long as the point-ray hull forms a \emph{$\mathcal{V}$-polyhedral relaxation} of $P_D$, then the point-ray collection is proper.

\begin{corollary}
\label{cor:relaxedPRcollection}
  A point-ray collection $(\pointset,\rayset)$ is proper if $\Pt \subseteq \conv(\pointset) + \cone(\rayset)$ for all $t \in \mathcal{T}$, or, equivalently, if $P_D \subseteq \conv(\pointset) + \cone(\rayset)$.
\end{corollary}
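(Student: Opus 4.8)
The plan is to reduce Corollary~\ref{cor:relaxedPRcollection} to the already-proven Corollary~\ref{cor:PRLP-globallyproper}, which tells us that properness is equivalent to the containment $\PI \subseteq \conv(\pointset) + \cone(\rayset)$. So it suffices to establish the chain of inclusions $\PI \subseteq P_D \subseteq \conv(\pointset) + \cone(\rayset)$, after which properness follows immediately. The leftmost inclusion $\PI \subseteq P_D$ is exactly one of the standing assumptions on the disjunction made at the end of the Notation section, so no work is needed there.

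First I would dispatch the claimed equivalence between the per-term hypothesis and the single hypothesis on $P_D$. If $\Pt \subseteq \conv(\pointset) + \cone(\rayset)$ for every $t \in \mathcal{T}$, then the union $\cup_{t \in \mathcal{T}} \Pt$ is contained in $\conv(\pointset) + \cone(\rayset)$. Since the point-ray hull is itself a closed convex set (it is the Minkowski sum of a polytope and a finitely generated cone, hence a polyhedron), it contains the closed convex hull of that union, which is precisely $P_D = \cl\conv(\cup_{t \in \mathcal{T}} \Pt)$. Conversely, $P_D \supseteq \Pt$ for each $t$, so the single containment hypothesis trivially implies the per-term one. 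This settles that the two stated hypotheses are interchangeable.

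Granting the hypothesis $P_D \subseteq \conv(\pointset) + \cone(\rayset)$, I would then combine it with the assumption $\PI \subseteq P_D$ to obtain $\PI \subseteq \conv(\pointset) + \cone(\rayset)$, and invoke Corollary~\ref{cor:PRLP-globallyproper} to conclude that $(\pointset,\rayset)$ is proper. The entire argument is a short composition of set inclusions together with the convexity/closedness of the point-ray hull.

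The only genuine subtlety---and the step I would flag as the main obstacle---is the use of closedness. The term hull $P_D$ is defined with a closure operation, whereas $\conv(\pointset) + \cone(\rayset)$ is already closed because it is finitely generated (recall $\pointset$ and $\rayset$ are finite sets), so taking the closed convex hull of the union does not escape the point-ray hull. I would make this explicit rather than silently passing the closure through, since the containment of a union inside a closed convex set does pass to the closed convex hull precisely because the enclosing set is closed and convex. With that observation stated cleanly, the proof is essentially immediate.
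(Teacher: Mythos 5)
Your proof is correct and takes the route the paper intends: the paper states this corollary without proof, the implicit argument being exactly your chain $P_I \subseteq P_D \subseteq \conv(\pointset) + \cone(\rayset)$ combined with Corollary~\ref{cor:PRLP-globallyproper} and the standing assumption $P_I \subseteq P_D$ from the Notation section. Your explicit justification of the equivalence of the two hypotheses via closedness of the point-ray hull (a polyhedron, since $\pointset$ and $\rayset$ are finite, so $\cl\conv(\cup_{t \in \mathcal{T}} P^t)$ cannot escape it) is a detail the paper glosses over, and you handle it correctly.
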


\subsection{Simple VPCs from simple cone relaxations}
\label{sec:simple-vpcs}

To generate valid VPCs from the PRLP, we                                 first have to compute a proper point-ray collection.
A natural starting point is a $\mathcal{V}$-polyhedral description of the disjunctive hull.
For $t \in \mathcal{T}$, let $\pointset^{t*}$ and $\rayset^{t*}$ be the complete set of extreme points and rays of $\Pt$,
and define
  $\pointset^{*} \defas \cup_{t \in \mathcal{T}} \pointset^{t*}$ 
and 
  $\rayset^{*} \defas \cup_{t \in \mathcal{T}} \rayset^{t*}$.
As a corollary of Theorem~\ref{thm:PRLP-facets}, we know not only that $(\pointset^{*},\rayset^{*})$ is proper but also that basic feasible solutions of the normalized \eqref{PRLP}
from this point-ray collection correspond to 
facet-defining inequalities for the disjunctive hull, $P_D$.

\begin{corollary}
\label{cor:completePRLP}
  The point-ray collection $(\pointset^{*},\rayset^{*})$ is proper.
  Every extreme ray solution $(\alpha,\beta)$ of the associated \eqref{PRLP} corresponds to a facet $\alpha^\T x \ge \beta$ of $P_D$.
  Conversely, for every facet $\alpha^\T x \ge \beta$ of $P_D$, %
  the solution $(\alpha,\beta)$ to \eqref{PRLP} is feasible and extreme. %
\end{corollary}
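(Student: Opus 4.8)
The plan is to collapse the entire corollary onto a single geometric identity, namely that the point-ray hull of the complete collection is exactly the disjunctive hull:
\[ \conv(\pointset^*) + \cone(\rayset^*) = P_D. \]
Once this identity is in hand, all three assertions are immediate consequences of earlier results. Properness follows from Corollary~\ref{cor:relaxedPRcollection}, since the identity gives $P_D \subseteq \conv(\pointset^*) + \cone(\rayset^*)$. Both facet statements follow verbatim from Theorem~\ref{thm:PRLP-facets}: extreme rays of \eqref{PRLP} correspond to facets of the point-ray hull and conversely, and the point-ray hull is now literally $P_D$, so extreme rays correspond to facets of $P_D$ in both directions (feasibility of the facet-defining $(\alpha,\beta)$ being subsumed, as every extreme ray is a feasible solution and every facet is valid for $P_D$).

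To prove the identity, first I would apply the Minkowski--Weyl theorem termwise. Because $\Pt \subseteq \ref{P}$ and \ref{P} is pointed, each nonempty $\Pt$ is a pointed polyhedron, so $\Pt = \conv(\pointset^{t*}) + \cone(\rayset^{t*})$ (empty terms contribute no generators). Hence $\Pt \subseteq \conv(\pointset^*) + \cone(\rayset^*)$ for every $t \in \mathcal{T}$, so $\cup_{t}\Pt$, and therefore $\conv(\cup_t \Pt)$, is contained in the point-ray hull. The point-ray hull is finitely generated, hence a polyhedron and in particular closed, so it also contains $\cl\conv(\cup_t \Pt) = P_D$; this is the inclusion $\supseteq$.

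For the reverse inclusion I would check each generator lies in $P_D$ in the appropriate sense. Every $p \in \pointset^*$ belongs to some $\Pt \subseteq P_D$, so $\conv(\pointset^*) \subseteq P_D$ by convexity. Every $r \in \rayset^*$ is a recession direction of some $\Pt$; since $\Pt \subseteq P_D$ and $P_D$ is closed and convex, a recession direction from one point of $P_D$ is a recession direction from all of them, so $r \in \rec(P_D)$. Thus $\cone(\rayset^*) \subseteq \rec(P_D)$, and $\conv(\pointset^*) + \cone(\rayset^*) \subseteq P_D + \rec(P_D) = P_D$, completing the equality.

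The main obstacle is the closure. The raw hull $\conv(\cup_t \Pt)$ need not be closed (combining a bounded term with an unbounded one can produce boundary directions attainable only in the limit), so the identity genuinely relies both on $P_D$ being defined as a closure and on the point-ray hull being closed as a finitely generated set; it is exactly these two facts that let the two inclusions meet. The secondary delicate point is the verification that $\rec(\Pt) \subseteq \rec(P_D)$, which is where the closedness and convexity of $P_D$ are used, and where one should confirm that extending a ray of a single term never escapes the hull of the union.
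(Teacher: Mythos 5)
Your proof is correct and takes essentially the same route as the paper, which states this corollary as an immediate consequence of Theorem~\ref{thm:PRLP-facets} precisely because, for the complete collection, the point-ray hull $\conv(\pointset^{*}) + \cone(\rayset^{*})$ coincides with $P_D$. You have simply made explicit the Minkowski--Weyl decomposition of each pointed term and the closure/recession-cone details that the paper leaves tacit.
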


The complete $\mathcal{V}$-polyhedral description of each disjunctive term is, however, impractical, as the number of points and rays can grow exponentially large in $m$ and $n$.
A reasonable alternative would be to use some small subset of the point-ray collection $(\pointset^{*}, \rayset^{*})$.
It is not difficult to see that this could lead to invalid cuts, as we show by example in 
the extended manuscript~\cite{BalKaz22+_vpc-arxiv}.
It is for this reason that \citet{PerBal01} and \citet*{LouPoiSal15} employ constraint generation to obtain valid cuts.

We take an alternative approach, based on Corollary~\ref{cor:relaxedPRcollection}: instead of pursuing all facet-defining inequalities for the disjunctive hull, we use a relaxation of $P_D$ with a compact $\mathcal{V}$-polyhedral description.
One convenient relaxation for each disjunctive term is the \emph{basis cone} $C^t$ at an optimal solution $p^t$ to 
  $\min_{x} \{c^\T x \suchthat x \in \Pt\}$, 
formed from $p^t$ and a \emph{cobasis} $\NB(p^t)$ associated with $p^t$.
This cone is defined as the intersection of the $n$ inequalities corresponding to the nonbasic variables, which are indexed by $\NB(p^t)$, and it has a compact $\mathcal{V}$-polyhedral description, with only one extreme point ($p^t$) and $n$ extreme rays, which we denote $r^{t1},\ldots,r^{tn}$.
We refer to the union of these points and rays across all terms as the \emph{simple point-ray collection} $(\pointset^0,\rayset^0)$, where
$\pointset^0 \defeq \cup_{t \in \mathcal{T}} \{p^t\}$ and
$\rayset^0 \defeq \cup_{t \in \mathcal{T}} \{r^{tj}\}_{j \in [n]}$, 
and we will use the shorthand 
	$P_{D}^0 \defeq \conv(\pointset^0) + \cone(\rayset^0) 
	$ 
to denote the corresponding \emph{simple point-ray hull}.
The specific PRLP used in our experiments is given below as \eqref{PRLP0}.
Recall that we formulate the problem in the nonbasic space.
To make this explicit, for any $x \in \R^n$, we use $\nbspace{x}$ for its representation in the nonbasic space.
  \begin{align*}
    \min_{\nbspace{\alpha}} \quad &\nbspace{\alpha}^\T \nbspace{w} \\ \label{PRLP0} \tag{PRLP$^0$}
    &\begin{aligned}
      &\nbspace{\alpha}^\T \nbspace{p}^t \ge 1 &\quad& \text{for all } \rlaptext{$t \in \mathcal{T}$} \\
      &\nbspace{\alpha}^\T \nbspace{r}^{tj} \ge 0 &\quad& \text{for all } \rlaptext{$t \in \mathcal{T}$, $j \in [n]$}
    \end{aligned}
  \end{align*}
The cuts from \eqref{PRLP0} will be called \emph{simple} VPCs.
We state their validity as Proposition~\ref{prop:VPC1}.

\begin{proposition}
\label{prop:VPC1}
  The simple point-ray collection $(\pointset^0,\rayset^0)$ is proper.
\end{proposition}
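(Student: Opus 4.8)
The plan is to reduce the claim to Corollary~\ref{cor:relaxedPRcollection}, which guarantees that $(\pointset^0,\rayset^0)$ is proper as soon as $\Pt \subseteq \conv(\pointset^0) + \cone(\rayset^0) = P_D^0$ holds for every $t \in \mathcal{T}$. Thus the entire task is to establish this containment term by term, and its heart is the observation that each disjunctive term is contained in its own basis cone, $\Pt \subseteq C^t$.

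First I would make the $\mathcal{V}$-description of the basis cone explicit. Since $p^t$ is a basic solution of $\min\{c^\T x \suchthat x \in \Pt\}$, the $n$ constraints indexed by the cobasis $\NB(p^t)$ are linearly independent and binding at $p^t$; hence $C^t$, their intersection, is a pointed simplicial cone with apex $p^t$ and exactly $n$ extreme rays $r^{t1},\dots,r^{tn}$. Consequently $C^t = \conv(\{p^t\}) + \cone(\{r^{t1},\dots,r^{tn}\})$, which is precisely the description already used in building $(\pointset^0,\rayset^0)$.

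Next I would argue the relaxation $\Pt \subseteq C^t$. The polyhedron $\Pt$ is cut out by the full system $Ax \ge b$ together with $D^t x \ge D^t_0$, whereas $C^t$ retains only the $n$ inequalities of this system that are binding at $p^t$ and indexed by $\NB(p^t)$. Dropping the remaining (still satisfied) inequalities can only enlarge the feasible set, so every $x \in \Pt$ also lies in $C^t$. Combining with the previous step gives $\Pt \subseteq C^t = \conv(\{p^t\}) + \cone(\{r^{tj}\}_{j \in [n]})$. Because $p^t \in \pointset^0$ and $\{r^{tj}\}_{j\in[n]} \subseteq \rayset^0$, the right-hand side is contained in $\conv(\pointset^0) + \cone(\rayset^0) = P_D^0$, whence $\Pt \subseteq P_D^0$. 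As $t$ was arbitrary, Corollary~\ref{cor:relaxedPRcollection} applies and $(\pointset^0,\rayset^0)$ is proper.

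The main obstacle is the relaxation step and its interaction with degeneracy. If $p^t$ is primal degenerate there may be more than $n$ constraints binding at $p^t$, and $\NB(p^t)$ selects a particular linearly independent subset of size $n$. I would stress that the containment $\Pt \subseteq C^t$ is insensitive to this choice, since it holds for any subsystem of the defining inequalities we keep, and that linear independence of the chosen $n$ constraints (guaranteed because $p^t$ is basic) is exactly what makes $C^t$ simplicial with the claimed $n$ extreme rays, so that its $\mathcal{V}$-description coincides with the points and rays used to form $(\pointset^0,\rayset^0)$.
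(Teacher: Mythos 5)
Your proof is correct and follows essentially the same route as the paper: both reduce the claim to Corollary~\ref{cor:relaxedPRcollection} via the containment $P_D \subseteq P_D^0$, which the paper asserts in one line and you justify in detail by noting that each basis cone $C^t$ drops constraints of $\Pt$ (hence $\Pt \subseteq C^t$) and that $C^t = \conv(\{p^t\}) + \cone(\{r^{tj}\}_{j \in [n]}) \subseteq P_D^0$. Your added remark on degeneracy is a welcome clarification but not a departure from the paper's argument.
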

\begin{proof}
  The result follows from Corollary~\ref{cor:relaxedPRcollection} and the fact that $P_D \subseteq P_D^0$.
\end{proof}

It is useful at this point to make a theoretical comparison between the PRLP and extended formulation used for the CGLP.
Any valid disjunctive cut can be theoretically found with a CGLP having size polynomial in the dimensions of the original problem and the number of disjunctive terms.
In particular, the CGLP with a fixed right-hand side $\beta$ has $(n+1) \cdot \card{\mathcal{T}}$ constraints and $n + (m + m_t) \cdot \card{\mathcal{T}}$ variables (where $m_t$ denotes the number of rows of $D^t$).
It is similarly possible to produce all valid disjunctive cuts with the $\mathcal{V}$-polyhedral framework, when $(\pointset,\rayset) = (\pointset^{*},\rayset^{*})$, as we proved in Corollary~\ref{cor:completePRLP}.
The disadvantage is that the PRLP may now have exponentially many constraints, which is why we turned to the relaxation-based generator.
This yields \eqref{PRLP0}, whose feasible region is defined by only $(n+1) \cdot \card{\mathcal{T}}$ constraints (the same as for the CGLP) and only $n$ variables.
Moreover, working in the original dimension of the problem confers significant computational efficiency over the CGLP framework, as we discuss in Section~\ref{sec:disj} via the experiments of \citet{PerBal01}, but the tradeoff is being able to generate merely a subset of all valid disjunctive cuts.
Nevertheless, the subsequent theoretical results of this section and the later computational results indicate that this subset already captures strong disjunctive cuts.

\subsection{Simple VPCs corresponding to facets of the disjunctive hull}
\label{sec:vpc-facets}

\begin{figure}
  \centering
    \begin{subfigure}{.5\textwidth}%
    \centering
      \begin{tikzpicture}[line join=round,line cap=round,x={(\xX cm, \xY cm)},y={(\yX cm, \yY cm)},z={(\zX cm,\zY cm)},>=stealth,scale=2.9]
        \coordinate (barx) at (1/2,1/2,1);
        
        \coordinate (p11) at (0,1/4,0);
        \coordinate (p12) at (0,1/2,1/2);
        \coordinate (p13) at (0,3/4,0);
        
        \coordinate (p21) at (1,0,0);
        \coordinate (p22) at (1,1/4,1/2);
        \coordinate (p23) at (1,3/4,1/2);
        \coordinate (p24) at (1,1,0);
        
        \draw [polyhedron_edge] (barx) -- (p12) -- (p11) -- (p21) -- (p22) -- cycle;
        \draw [polyhedron_edge,dashed] (p12) -- (p13) -- (p11);
        \draw [polyhedron_edge] (p21) -- (p22) -- (p23) -- (p24) -- cycle;
        \draw [polyhedron_edge] (barx) -- (p22) -- (p23) -- cycle;
        \draw [polyhedron_edge,dashed] (p13) -- (p24);        
        
        \node [draw, point, fill=black, label={[label distance=0pt]above: $\lpopt$}] at (barx) {};
      \end{tikzpicture}
    \end{subfigure}%
    \begin{subfigure}{.5\textwidth}%
    \centering
      \begin{tikzpicture}[line join=round,line cap=round,x={(\xX cm, \xY cm)},y={(\yX cm, \yY cm)},z={(\zX cm,\zY cm)},>=stealth,scale=2.9]
        \fill [polyhedron_fill] (p11) -- (p12) -- (p13) -- cycle;
        \fill [polyhedron_fill] (p21) -- (p22) -- (p23) -- (p24) -- cycle;
        
        \draw [polyhedron_edge,gray] (p12) -- (p11);
        \draw [polyhedron_edge,gray] (p21) -- (p22);
        \draw [polyhedron_edge,gray] (p13) -- (p11);
        \draw [polyhedron_edge,gray] (p21) -- (p22) -- (p23) -- (p24) -- cycle;
        
        \draw [ray,line width=2pt] (p12) -- (0,1/8,-1/4);
        \draw [ray,line width=2pt] (p12) -- (0,7/8,-1/4);
        \draw [ray,line width=2pt] (p22) -- (1,-1/16,-1/8);
        
        \draw [cut_edge,dashed] (p13) -- (p12) -- (barx) -- (p23) -- (p24) -- cycle;
        
        \draw [ray,line width=2pt] (p22) -- (1,7/8,1/2);
        
        \node [draw, point, fill=black, label={[label distance=0pt]above: $\lpopt$}] at (barx) {};
        \node [draw, point, fill=black, label={[label distance=-6pt]above left: $C^1$}] at (p12) {};
        \node [draw, point, fill=black, label={[label distance=0pt]below left: $C^2$}] at (p22) {};
                
        \node [label={[label distance=-13pt]0:  $P^1$}] at (0,11/20,1/8) {};
        \node [label={[label distance=-13pt]0:  $P^2$}] at (1,1/2,1/4) {};
      \end{tikzpicture}
    \end{subfigure}
  \caption{%
		This example shows how the simple point-ray collection could limit the set of obtainable inequalities valid for the disjunctive hull.
		The right panel shows that the dashed inequality (that is valid for $P_I$) would violate a ray of the cone $C^2$.
		The cones are shown as two dimensional for ease of illustration.
  }
  \label{fig:non-facet}
  \end{figure}
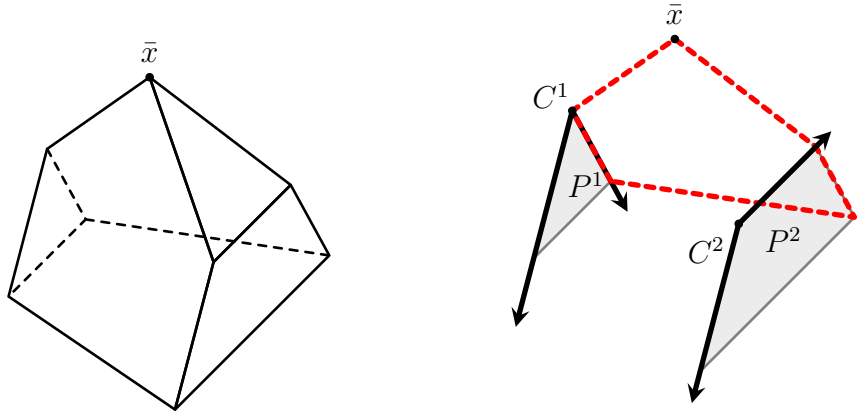

In this section, we compare the disjunctive hull $P_D$ with the relaxation $P_D^0$.
Although $P_D^0$ is a drastic relaxation of $P_D$, in that it is defined by a small fraction of the inequalities defining $P_D$, we show that $P_D^0$ can tightly approximate $P_D$ in the region of interest to us.
We distinguish between two types of facets of $P_D^0$: those that are facets of $P_D$, and those that are not.
Clearly not all facets of $P_D$ are captured by $P_D^0$, which we illustrate in Figure~\ref{fig:non-facet}.
The results below concern precisely which facets of $P_D$ exist as facets of $P_D^0$.

Consider a facet-defining inequality of $P_D^0$ that is violated by $\lpopt$
and a corresponding basic feasible solution to \eqref{PRLP0},
after adding slack variables on the constraints.
Since the cut coefficient variables $\nbspace{\alpha}$ are unrestricted in sign,
the nonbasic variables in this solution are all slack variables,
and the corresponding tight constraints of \eqref{PRLP0} identify
$n$ affinely independent points and rays of $(\pointset^0,\rayset^0)$
that lie on the inequality and certify that it defines a facet of $P_D^0$.
We call these the ``\emph{nonbasic points}'' and ``\emph{nonbasic rays}'' with respect to the basic feasible solution.

By construction of the simple point-ray collection,
each of the nonbasic points also belongs to $P_D$.
Now assume that, for each of the nonbasic rays $r$,
there is a point $p \in \pointset^0$ tight for the cut and a $\lambda > 0$ such that $p_r \defeq p + \lambda r \in P_D$,
which is thus also on the cut.
Then the nonbasic points and the points $p_r$ for the nonbasic rays certify that the inequality also defines a facet of $P_D$.

This assumption is, unfortunately, not generally satisfied:
for some nonbasic rays, we might not find a point in $P_D$ corresponding to that ray.
One difficulty, illustrated in the example shown in Figure~\ref{fig:non-facet2}, is that although a ray originates from some particular point and term (when building the point-ray collection), it is ultimately added to \emph{all} points in $\pointset^0$ to calculate $P_D^0$.
      In this example, a four-term disjunction is taken.
      The first panel shows \ref{P};
      the second panel shows $P_D$, as well as $\Pt$ and the cones $C^t$ for each $t \in [4]$, with $C^1$ labeled;%
      \footnote{For simplicity in making the example, these $C^t$ are not basis cones, due to degeneracy.}
      and the third panel shows $P_D^0$
      and the points and rays (as wavy arrows) that are tight for each of the facets of this point-ray hull.
      Consider the ray $r$ of $C^1$ labeled in two places in the third panel; it is added to the collection from one disjunctive term but impacts two facets.
      The effect of $r$ is it causes $F_1$ (a facet-defining inequality for $P_D$) to be invalid for the point-ray hull, and it adds a facet ($F_2$) to the point-ray hull that is redundant for \ref{P}.

Consider a basic feasible solution of \eqref{PRLP0} and associated cut $\alpha^\T x \ge \beta$.
If $r^{tj}$ is a nonbasic ray in this solution, %
but there is no term $t' \in \mathcal{T}$ for which $r^{tj}$ is an extreme ray of $C^{t'}$ and $\alpha^\T p^{t'} = \beta$,
then we call this a \emph{stray ray} (for the facet).
Note that $t' \ne t$ is possible if a ray is extreme for multiple terms.
With this, we say that a facet of $P_D^0$ is \emph{standard} if there is a corresponding basis of \eqref{PRLP0} with no stray rays,
i.e., when
$\nbspace{\alpha}^\T \nbspace{r}^{tj} = 0$ (with the row's slack variable nonbasic)
implies that $\nbspace{\alpha}^\T \nbspace{p}^{t'} = 1$, for some $t' \in \mathcal{T}$ for which $r^{tj}$ is an extreme ray of $C^{t'}$.
Thus, facet $F_2$ in Figure~\ref{fig:non-facet2} is not standard, due to stray ray $r$ from $C^1$.
We apply this concept in Theorem~\ref{thm:simple-facets} to state a sufficient condition for a facet of the simple point-ray hull to be a facet of the disjunctive hull. %

  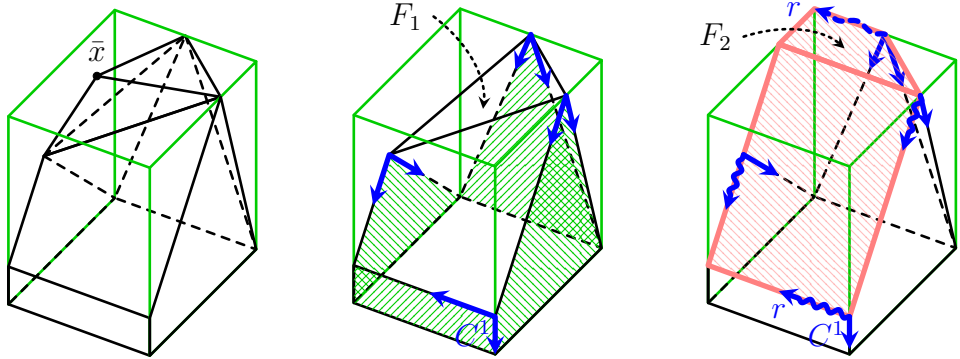
\begin{figure}
    \centering
    \begin{subfigure}{.33\linewidth}
    \centering
      \begin{tikzpicture}[line join=round,line cap=round,x={(\xX cm, \xY cm)},y={(\yX cm, \yY cm)},z={(\zX cm,\zY cm)},>=stealth,scale=2] 
        \coordinate (p11) at (0,0,-1/4);  
        \coordinate (p12) at (0,0,0);
        \coordinate (p13) at (0,1/3,1/2);
        \coordinate (p14) at (0,1,-1/4);

        \coordinate (p21) at (1,0,-1/4);  
        \coordinate (p22) at (1,0,0);
        \coordinate (p23) at (1,2/3,1);
        \coordinate (p24) at (1,1,-1/4);
        
        \coordinate (p0) at (1/4,1/2,1);
        \coordinate (p3) at (1/2, 1, 1);
        
        \draw [fill opacity=0.5,fill=none, color=green!80!black, line width=1pt] (0,0,-1/4) -- (0,1,-1/4) -- (0,1,1) -- (0,0,1) -- cycle;
                        
        \draw [polyhedron_edge] (p11) -- (p12) -- (p13) -- (p23);
        \draw [polyhedron_edge, dashed] (p13) -- (p14) -- (p11);
        \draw [polyhedron_edge] (p11) -- (p21) -- (p22) -- (p12) -- cycle;
        \draw [polyhedron_edge] (p13) -- (p23);
        \draw [polyhedron_edge, dashed] (p13) -- (p3);
        \draw [polyhedron_edge, dashed] (p3) -- (p24) -- (p14) -- cycle;
        \draw [polyhedron_edge] (p0) -- (p13);
        \draw [polyhedron_edge] (p0) -- (p3) -- (p23) -- cycle;
        
        \draw [fill opacity=0.5,fill=none, color=green!80!black, line width=1pt] (1,0,-1/4) -- (1,1,-1/4) -- (1,1,1) -- (1,0,1) -- cycle;
        \draw [polyhedron_edge] (p21) -- (p22) -- (p23) -- (p24) -- cycle;
        \draw [fill opacity=0.5,fill=none, color=green!80!black, line width=1pt] (0,0,1) -- (1,0,1) -- cycle;
        \draw [fill opacity=0.5,fill=none, color=green!80!black, line width=1pt] (0,1,1) -- (1,1,1) -- cycle;
        \node [draw, point, fill=black, label={[label distance=0pt]90: $\lpopt$}] at (p0) {};
      \end{tikzpicture}
    \end{subfigure}%
    \begin{subfigure}{.34\linewidth}
    \centering
      \begin{tikzpicture}[line join=round,line cap=round,x={(\xX cm, \xY cm)},y={(\yX cm, \yY cm)},z={(\zX cm,\zY cm)},>=stealth,scale=2]
        \draw [fill opacity=0.5,fill=none, color=green!80!black, line width=1pt] (0,0,-1/4) -- (0,1,-1/4) -- (0,1,1) -- (0,0,1) -- cycle;
        \draw [fill opacity=0.5,fill=none, color=green!80!black, line width=1pt] (0,1,1) -- (1,1,1) -- cycle;
                        \path [pattern=north east lines, pattern color=green!80!black] (p14) -- (p3) -- (p24) -- cycle;
        
        \path [pattern=north west lines, pattern color=green!80!black] (p11) -- (p12) -- (p13) -- (p14) -- cycle;
        \draw [polyhedron_edge] (p11) -- (p12) -- (p13);
        \draw [polyhedron_edge, dashed] (p13) -- (p14) -- (p11);
        \draw [polyhedron_edge] (p11) -- (p21) -- (p22) -- (p12) -- cycle;
        \draw [polyhedron_edge] (p13) -- (p3) -- (p23) -- cycle;
        \draw [polyhedron_edge, dashed] (p3) -- (p24) -- (p14) -- cycle;
        
        \draw [fill opacity=0.5,fill=none, color=green!80!black, line width=1pt] (1,0,-1/4) -- (1,1,-1/4) -- (1,1,1) -- (1,0,1) -- cycle;
                \draw [line width=1pt, pattern=north west lines, pattern color=green!80!black] (p21) -- (p22) -- (p23) -- (p24) -- cycle;
                        \path [pattern=north east lines, pattern color=green!80!black] (p11) -- (p12) -- (p22) -- (p21) -- cycle;
        \draw [ray, line width=2pt, blue,<->] (1,0,-1/4) -- (p22) node [pos=1,below left=-5pt] {$C^1$} -- (1/2,0,0); %
        \draw [ray, line width=2pt, blue,<->] (0,2/3,1/8) -- (p13) -- (0,1/6,1/4);
        \draw [ray,blue,line width=2pt,<->] (1,9/12,11/16) -- (p23) -- (1,1/2,3/4);
        \draw [ray,blue,line width=2pt,<->] (5/8,1,11/16) -- (p3) -- (3/8,1,11/16);
        \draw [fill opacity=0.5,fill=none, color=green!80!black, line width=1pt] (0,0,1) -- (1,0,1) -- cycle;
         \path
          (0,3/4,9/8) edge [->, shorten >= 0.25cm, line width=1pt, black, dotted, bend left=30] node [pos=0,left] {$F_1$} (1/3,3/4,1/2);
      \end{tikzpicture}
    \end{subfigure}%
    \begin{subfigure}{.33\linewidth}
    \centering
      \begin{tikzpicture}[line join=round,line cap=round,x={(\xX cm, \xY cm)},y={(\yX cm, \yY cm)},z={(\zX cm,\zY cm)},>=stealth,scale=2]
        \draw [fill opacity=0.5,fill=none, color=green!80!black, line width=1pt] (0,0,-1/4) -- (0,1,-1/4) -- (0,1,1) -- (0,0,1) -- cycle;
        \draw [fill opacity=0.5,fill=none, color=green!80!black, line width=1pt] (0,1,1) -- (1,1,1) -- cycle;
        
        \draw [polyhedron_edge] (p11) -- (p12) -- (p13);
        \draw [polyhedron_edge, dashed] (p13) -- (p14) -- (p11);
        \draw [polyhedron_edge] (p11) -- (p21) -- (p22) -- (p12) -- cycle;
        \draw [line width=1pt, dashed] (p3) -- (p24) -- (p14) -- cycle;

        \draw [fill opacity=0.5,fill=none, color=green!80!black, line width=1pt] (1,0,-1/4) -- (1,1,-1/4) -- (1,1,1) -- (1,0,1) -- cycle;
        \draw [polyhedron_edge] (p21) -- (p22) -- (p23) -- (p24) -- cycle;
        \draw [ray,blue,line width=2pt,<->] (5/8,1,11/16) -- (p3) -- (3/8,1,11/16);
        \draw [fill opacity=0.5,fill=none, color=red!50, line width=2pt,pattern=north west lines, pattern color=red!50] (p3) -- (p23) -- ($(p23) - (1,0,0)$) -- (0,1,1) -- cycle;
        \draw [fill opacity=0.5,fill=none, color=red!50, line width=2pt,pattern=north west lines, pattern color=red!50] (p23) -- ($(p23) - (1,0,0)$) -- (p12) -- (p22) -- cycle;
        \draw [%
             ->,
            decorate,%
            decoration={snake,amplitude=.4mm,segment length=2mm,post length=2mm},
            blue,line width=2pt] %
          (p22) -- (1/2,0,0) node [pos=1,below] {$r$};
        \draw [ray,blue,line width=2pt] (p22) -- (1,0,-1/4)  node [pos=0,below left=-4.5pt] {$C^1$} ;
        \draw [%
             <-,
            decorate,%
            decoration={snake,amplitude=.4mm,segment length=2mm,pre length=2mm},
            blue,line width=2pt] %
          (0,1/6,1/4) -- (p13) edge [->] (0,2/3,1/8);
        \draw [%
             ->,
            decorate,%
            decoration={snake,amplitude=.4mm,segment length=2mm,post length=2mm},
            blue,line width=2pt] %
          (p23) -- (1,1/2,3/4);
        \draw [ray,line width=2pt,blue] (p23) -- (1,9/12,11/16);
        \draw [%
             ->,
            decorate,%
            decoration={snake,amplitude=.4mm,segment length=5mm,post length=1mm},
            blue,line width=2pt,dashed] %
          (p3) -- (0,1,1) node [pos=1,left] {$r$};
        \draw [fill opacity=0.5,fill=none, color=green!80!black, line width=1pt] (0,0,1) -- (1,0,1) -- cycle;
         \path
          (-1/8,1/2,9/8) edge [->, shorten >= 0.25cm, line width=1pt, black, dotted, bend left=30] node [pos=0,left] {$F_2$} (1/2,3/4,1);
      \end{tikzpicture}
    \end{subfigure}
    \caption{
	  Rays in the point-ray collection impact the set of cuts that can be generated.
      The dashed wavy line in the third panel corresponds to a ray $r$ that is added to $\rayset^0$ from one term of the disjunction, but affects the point-ray hull when it originates from a point from a different term.
    }
    \label{fig:non-facet2}
  \end{figure}

\begin{theorem}
\label{thm:simple-facets}
	Suppose the basis defining $p^t$ is unique for each $t \in \mathcal{T}$.
	If a facet of $P_D^0$ is standard,
	then it is a facet of $P_D$ that cuts $\lpopt$.
\end{theorem}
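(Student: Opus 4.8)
Throughout I would work in the nonbasic space of \eqref{PRLP0}, where $\lpopt$ is the origin and the normalization fixes $\beta = 1$; since affine independence and the facet property are preserved by the affine bijection between this space and the structural space, it is harmless to argue here. The plan is to verify that a standard facet satisfies the sufficient condition isolated in the discussion preceding the theorem, namely that every nonbasic ray can be traded for an honest point of $P_D$ lying on the cut. Fix a standard facet, represented by a basic feasible solution $\nbspace\alpha$ of \eqref{PRLP0} with no stray rays. The ``cuts $\lpopt$'' claim is immediate: evaluating the cut $\nbspace\alpha^\T\nbspace x \ge 1$ at the origin gives $0 \ge 1$, a violation. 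Validity for $P_D$ is equally immediate, since every feasible solution of \eqref{PRLP0} is valid for $P_D^0 = \conv(\pointset^0) + \cone(\rayset^0)$ by Theorem~\ref{thm:PRLP-facets} and $P_D \subseteq P_D^0$ by Proposition~\ref{prop:VPC1}. Thus $F \defeq P_D \cap \{x : \nbspace\alpha^\T \nbspace x = 1\}$ is a face of $P_D$, and it remains only to prove $\dim F = n-1$.

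From the basic feasible solution I would read off the $n$ tight rows, giving the affinely independent nonbasic points and nonbasic rays on the cut hyperplane $H$. Each nonbasic point is some $\nbspace p^t$, which lies in $P^t \subseteq P_D$ by construction of the simple collection and hence in $F$. To convert a nonbasic ray $\nbspace r^{tj}$ (for which $\nbspace\alpha^\T \nbspace r^{tj} = 0$) into a point of $F$, I invoke both hypotheses. Standardness supplies a term $t'$ such that $r^{tj}$ is an extreme ray of $C^{t'}$ and $\nbspace\alpha^\T \nbspace p^{t'} = 1$. Uniqueness of the basis defining $p^{t'}$ makes $p^{t'}$ a primal-nondegenerate vertex of $P^{t'}$: exactly the $n$ cobasis inequalities are active there, so $P^{t'}$ agrees with $C^{t'}$ in a neighborhood of $p^{t'}$ and the extreme rays of $C^{t'}$ are genuine edge directions of $P^{t'}$. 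Therefore $p_r \defeq p^{t'} + \lambda\, r^{tj} \in P^{t'} \subseteq P_D$ for all sufficiently small $\lambda > 0$, and because $\nbspace\alpha^\T \nbspace p^{t'} = 1$ while $\nbspace\alpha^\T \nbspace r^{tj} = 0$, this $p_r$ lies on $H$ and thus in $F$. This is precisely where uniqueness is essential: without it, $C^{t'}$ could strictly contain the local cone of $P^{t'}$, and the edge could leave $P^{t'}$ immediately.

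Collecting the nonbasic points together with one replacement point $p_r$ for each nonbasic ray yields $n$ points of $F$, and I would finish by showing they can be chosen affinely independent; this is the step I expect to be the main obstacle, since the surrounding discussion asserts it without proof and replacing a ray by a finite point could a priori drop the dimension. I would treat the step lengths as free parameters $\lambda_1,\dots,\lambda_{n-k}$: the affine independence of the $n$ points is equivalent to the nonvanishing of a polynomial $W(\lambda)$ (a fixed maximal minor, or the exterior product of the difference vectors), and expanding multilinearly shows that the coefficient of the top monomial $\prod_\ell \lambda_\ell$ is exactly the nonzero determinant witnessing affine independence of the original nonbasic points and rays. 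Hence $W \not\equiv 0$, so its zero set is nowhere dense, and we may pick each $\lambda_\ell$ inside the nonempty interval on which $p^{t'_\ell} + \lambda_\ell\, r^{t_\ell j_\ell}$ stays in its term while keeping $W(\lambda) \ne 0$. This produces $n$ affinely independent points in $F$, so $\dim F \ge n-1$; as $F \subseteq H$ forces $\dim F \le n-1$, we get $\dim F = n-1$, and since $P_D$ is full-dimensional this means $\nbspace\alpha^\T \nbspace x \ge 1$ is a facet of $P_D$.
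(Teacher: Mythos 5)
Your proof is correct and follows essentially the same route as the paper's: standardness attaches each nonbasic ray to a tight point $p^{t'}$, uniqueness of the basis (primal nondegeneracy) lets you slide a small step along that ray while staying inside the term and hence inside $P_D$, and the nonbasic points together with these replacement points give $n$ affinely independent points of $P_D$ on the cut hyperplane. The only difference is that you rigorously justify the affine-independence step via the multilinear perturbation argument in the step lengths, a detail the paper's proof simply asserts, so this is a filling-in of detail rather than a different approach.
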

\begin{proof}
	Given a standard facet of $P_D^0$ and a corresponding basic feasible solution of \eqref{PRLP0} with no stray rays,
	we construct $n$ affinely independent points from $P_D$ that lie on the facet
	based on the nonbasic points and rays in this solution.
	For each nonbasic ray $r \in \rayset^0$, there is a term $t \in \mathcal{T}$ such that $r$ is an extreme ray of $C^t$ and the point $p^t$ lies on the facet.
	Since the basis defining $p^t$ is not primal degenerate, for small enough $\lambda > 0$, the point $p_r \defeq p + \lambda r$ is in $\Pt$ (and thus in $P_D$), as otherwise the ray would be cut by a hyperplane of $\Pt$ that is tight at $p^t$ but not defining $C^t$.
	The nonbasic points---all belonging to $P_D$---and the points $p_r$ provide the desired $n$ affinely independent points.
\end{proof}

The requirement that the facet of $P_D$ has to cut $\lpopt$ is due to the normalization of \eqref{PRLP0} by $\nbspace{\beta}=1$.

The case of a split disjunction deserves special attention given its importance in prior work, especially in the context of L\&PCs.
Although a facet of $P_D$ is not necessarily a simple VPC (whereas it exists as an L\&PC),
we can conclude the converse using our much more compact formulation and Theorem~\ref{thm:simple-facets}, i.e., that all simple VPCs are facet-defining for the disjunctive hull.

\begin{theorem}
\label{thm:split-facets}
	Suppose that \eqref{eq:disj} is a split disjunction and the bases defining $p^1$ and $p^2$ are unique.
	Then every facet of $P_D^0$ that is tight on both $p^1$ and $p^2$ is a facet of $P_D$.
	Moreover, every facet of $P_D$ that is tight on both $p^1$ and $p^2$ and cuts $\lpopt$ exists as a facet of $P_D^0$.
\end{theorem}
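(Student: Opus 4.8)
The plan is to treat the two implications separately, in both cases using that a split disjunction has exactly two terms, so that ``tight on both $p^1$ and $p^2$'' coincides with ``tight on every $p^t$.'' For the forward statement I would deduce the result from Theorem~\ref{thm:simple-facets}, whose non-degeneracy hypothesis is precisely the assumed uniqueness of the bases defining $p^1$ and $p^2$. It then suffices to verify that a facet of $P_D^0$ tight on both $p^1$ and $p^2$ is \emph{standard}. Fixing a basic feasible solution of \eqref{PRLP0} that produces the facet, every nonbasic ray is an extreme ray $r^{tj}$ of some cone $C^t$; since the facet is tight at $p^t$ (it is tight on both apexes and there are only two terms), that ray cannot be stray. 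Hence no stray rays occur, the facet is standard, and Theorem~\ref{thm:simple-facets} yields that it is a facet of $P_D$ cutting $\lpopt$.

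For the ``moreover'' statement, write the given facet of $P_D$ as $\alpha^\T x \ge \beta$ with $\alpha^\T p^1 = \alpha^\T p^2 = \beta$. Because $P_D \subseteq P_D^0$, it is enough to show (i) that this inequality is valid for $P_D^0$ and (ii) that it is supported by $n$ affinely independent points of $P_D^0$. For (i), validity at $p^1, p^2 \in P_D$ is immediate, so the work is to show $\alpha^\T r^{tj} \ge 0$ for every extreme ray of $C^1$ and $C^2$. Uniqueness of the basis at $p^t$ makes $p^t$ a non-degenerate vertex of $P^t$, so each $r^{tj}$ is a genuine edge direction and $p^t + \lambda r^{tj} \in P^t \subseteq P_D$ for all small enough $\lambda > 0$; combining $\alpha^\T(p^t + \lambda r^{tj}) \ge \beta$ with $\alpha^\T p^t = \beta$ forces $\alpha^\T r^{tj} \ge 0$. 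For (ii), a facet of $P_D$ already carries $n$ affinely independent points of $P_D$, which lie in $P_D \subseteq P_D^0$; once validity for $P_D^0$ is established, the same points certify that the inequality is facet-defining for $P_D^0$. The hypothesis that the facet cuts $\lpopt$ is what permits normalizing it to $\nbspace\beta = 1$, so that it is realized as a solution of \eqref{PRLP0}.

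The linchpin in both directions is the same geometric fact: under the non-degeneracy guaranteed by uniqueness of the defining bases, the extreme rays of the basis cone $C^t$ coincide with the edge directions of $P^t$ at $p^t$. This is what lets tightness of a $P_D$-valid inequality at the single vertex $p^t$ propagate to nonnegativity on all of that term's rays, and what rules out stray rays in the forward direction. I expect the step needing the most care to be the affine-independence bookkeeping after replacing each tight ray $r^{tj}$ by the nearby feasible point $p^t + \lambda r^{tj}$: relative to a fixed tight apex this substitution is a triangular, hence invertible, change of coordinates, so linear independence of the original point-ray certificate is preserved and the resulting $n$ points of $P_D$ stay affinely independent.
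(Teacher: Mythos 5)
Your proof is correct and takes essentially the same route as the paper's: the first direction by noting that tightness at both $p^1$ and $p^2$ means no nonbasic ray can be stray, so the facet is standard and Theorem~\ref{thm:simple-facets} applies; the second by using non-degeneracy of the bases to show the facet is valid for $C^1$ and $C^2$ (hence for $P_D^0$) and then certifying facet-ness via the $n$ affinely independent tight points inherited from $P_D \subseteq P_D^0$. You simply make explicit the validity and affine-independence steps that the paper's terse proof leaves implicit.
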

\begin{proof}
  For the first statement, our assumptions imply that every facet of $P_D^0$ that is tight on both $p^1$ and $p^2$ is standard, because then all points of $\pointset$ have zero slack, so there can be no stray rays.
  The second statement follows from convexity, in that a facet of $P_D$ that is tight on $p^t$ will be valid for $C^t$, $t \in \{1,2\}$, as otherwise that facet cuts a ray of $C^t$ and the corresponding point from $P_D$ in the ray's relative interior.
\end{proof}

Theorem~\ref{thm:split-facets} can be extended to more general disjunctions for facets of $P_D^0$ that are tight on all points $p^t$, $t \in \mathcal{T}$.

\section{Choosing strong disjunctions}
\label{sec:disj}

The previous section introduces our computationally-viable way to generate disjunctive cuts through the PRLP via a $\mathcal{V}$-polyhedral relaxation of the given disjunctive hull.
To complete the setup of the constraints of \eqref{PRLP}, it remains to specify which class of disjunctions we will use in our experiments.

Much of the focus in the recent literature on cutting planes has been on generating stronger cuts from shallow disjunctions, i.e., those that utilize relatively few (one or two) integer variables, based on indices of integer variables that are fractional in $\lpopt$.
This includes disjunctions based on the complements of triangles, quadrilaterals, and crosses.
To compensate for the weakness of the disjunction, typically cuts are generated from several disjunctions from the same class in each round.
These families of disjunctions do contain cuts that outperform Gomory cuts, but one needs to carefully select which disjunctions to test within each family,
and the computational cost associated with finding the stronger cuts often outweighs their benefit (see, e.g., \cite{DasGunVie14}).

We circumvent some of these difficulties by expending additional effort to generate one strong ``deep'' disjunction per instance for cut generation.
Specifically, the disjunctions we define come from the set of leaf nodes of a partial branch-and-bound tree.%
\footnote{Besides intuitive appeal, our choice of disjunctions is further bolstered by the results in 
Appendix~G of the extended manuscript~\cite{BalKaz22+_vpc-arxiv},
indicating that VPCs from a multitude of split or cross disjunctions are weaker.}
A partial branch-and-bound tree has the advantage of conveying additional information about \eqref{IP} 
(with respect to an alternative disjunction with the same number of terms obtained without branching).
For instance, the partial tree may be asymmetric and include pruning by infeasibility, by integrality, and by bound.
We demonstrate this by example in Figure~\ref{fig:bbtree-example}, contrasting a cross disjunction generated from two integer variables $x_1$ and $x_2$ to a four-term disjunction that might be obtained using the branch-and-bound process.

\begin{figure}
\floatsetup{valign=t, heightadjust=all}
\ffigbox{%
\begin{subfloatrow}
	\ffigbox{%
    \centering
      \begin{tikzpicture}[line join=round,line cap=round,>=stealth,scale=5]
		\coordinate (root) at (5/10, 5/5);
		\coordinate (x0) at (2/10,4/5);
		\coordinate (x1) at (8/10,4/5);
		\coordinate (x00) at (0/10,3/5);
		\coordinate (x01) at (4/10,3/5);
		\coordinate (x10) at (6/10,3/5);
		\coordinate (x11) at (10/10,3/5);
        
        \coordinate (translate) at (0,0);
        \coordinate (root2) at ($(root)+(translate)$);
		\coordinate (y0) at ($(x0)+(translate)$);
		\coordinate (y00) at ($(x00)+(translate)$);
		\coordinate (y01) at ($(x01)+(translate)$);
		\coordinate (y1) at ($(x1)+(translate)$);
		\coordinate (y10) at ($(x10)+(translate)$);
		\coordinate (y11) at ($(x11)+(translate)$);
		\coordinate (y110) at ($(9/10,2/5)+(translate)$);
		\coordinate (y111) at ($(11/10,2/5)+(translate)$);
		\coordinate (y1100) at ($(8.5/10,1/5)+(translate)$);
		\coordinate (y1101) at ($(9.5/10,1/5)+(translate)$);
		
        \draw [line width=1pt] (root) -- node [pos=0.4,left=0.25em] {\small $x_1 \le 0$ } (x0);
        \draw [line width=1pt] (root) -- node [pos=0.4,right=0.25em] {\small $x_1 \ge 1$ } (x1);
        \draw [line width=1pt] (x0) -- node [pos=0.4,left=0.1em] {\small \llap{$x_2 \le 0$} } (x00);
        \draw [line width=1pt] (x0) -- node [pos=0.2,right=0.1em] {\small $x_2 \ge 1$ } (x01);
        \draw [line width=1pt] (x1) -- node [pos=0.6,left=0.1em] {\small $x_2 \le 0$ } (x10);
        \draw [line width=1pt] (x1) -- node [pos=0.4,right=0.1em] {\small \rlap{$x_2 \ge 1$} } (x11);
		
        \node [draw, point, fill=black, label={[label distance=0pt]90: }] at (root2) {};
        \node [draw, point, fill=black, label={[label distance=-3pt]90: }] at (x0) {};
        \node [draw, point, fill=black, label={[label distance=-3pt]45: }] at (x1) {};
        \node [draw, point, fill=black, label={[label distance=0pt]-90: }] at (x00) {};  
        \node [draw, point, fill=black, label={[label distance=0pt]-90: }] at (x01) {};
        \node [draw, point, fill=black, label={[label distance=0pt]-90: }] at (x10) {};
        \node [draw, point, fill=black, label={[label distance=0pt]-90: }] at (x11) {};
		\end{tikzpicture}}{\caption{The disjunction is all possible assignments of $x_1$ and $x_2$.}}%
	\ffigbox{%
    \centering
      \begin{tikzpicture}[line join=round,line cap=round,>=stealth,scale=5]
        \draw [line width=1pt] (root2) -- node [pos=0.4,left=0.25em] {\small $x_1 \le 0$ } (y0);
        \draw [line width=1pt] (root2) -- node [pos=0.4,right=0.25em] {\small $x_1 \ge 1$ } (y1);
        \draw [line width=1pt] (y0) -- node [pos=0.4,left=0.1em] {\small \llap{$x_2 \le 0$} } (y00);
        \draw [line width=1pt] (y0) -- node [pos=0.2,right=0.1em] {\small $x_2 \ge 1$ } (y01);
        \draw [line width=1pt] (y1) -- node [pos=0.6,left=0.1em] {\small $x_3 \le 0$ } (y10);
        \draw [line width=1pt] (y1) -- node [pos=0.4,right=0.1em] {\small $x_3 \ge 1$ } (y11);
        \draw [line width=1pt] (y11) -- node [pos=0.4,left=0em] {\small $x_2 \le 0$ } (y110);
        \draw [line width=1pt] (y11) -- node [pos=0.4,right=0em] {\small \rlap{$x_2 \ge 1$} } (y111);
        \draw [line width=1pt] (y110) -- node [pos=0.5,left=0em] {\small $x_4 \le 0$ } (y1100);
        \draw [line width=1pt] (y110) -- node [pos=0.5,right=0em] {\small $x_4 \ge 1$ } (y1101);
		
        \node [draw, point, fill=black, label={[label distance=0pt]90: }] at (root) {};
        \node [draw, point, fill=black, label={[label distance=0pt]90: }] at (y0) {};
        \node [draw, point, fill=black, label={[label distance=0pt]0: }] at (y00) {};
        \node [draw, point, fill=black, label={[label distance=0pt]0: }] at (y01) {};
        \node [draw, point, fill=black, label={[label distance=0pt]90: }] at (y1) {};
        \node [draw, point, fill=red, label={[label distance=0pt]0: }] at (y10) {};
        \node [draw, cross=3pt, fill=red] at (y10) {};
        \node [draw, point, fill=black, label={[label distance=0pt]90: }] at (y11) {};
        \node [draw, point, fill=black, label={[label distance=0pt]90: }] at (y110) {};
        \node [draw, point, fill=red, label={[label distance=0pt]0: }] at (y111) {};
        \node [draw, cross=3pt, fill=red] at (y111) {};
        \node [draw, point, fill=black, label={[label distance=0pt]90: }] at (y1100) {};
        \node [draw, point, fill=black, label={[label distance=0pt]90: }] at (y1101) {};
      \end{tikzpicture}}{\caption{A different variable is branched on, resulting in some pruned nodes and two stronger disjunctive terms.}}
\end{subfloatrow}}
	{\caption{Two four-term disjunctions (the leaf nodes of the trees).}
	\label{fig:bbtree-example}}
\end{figure}
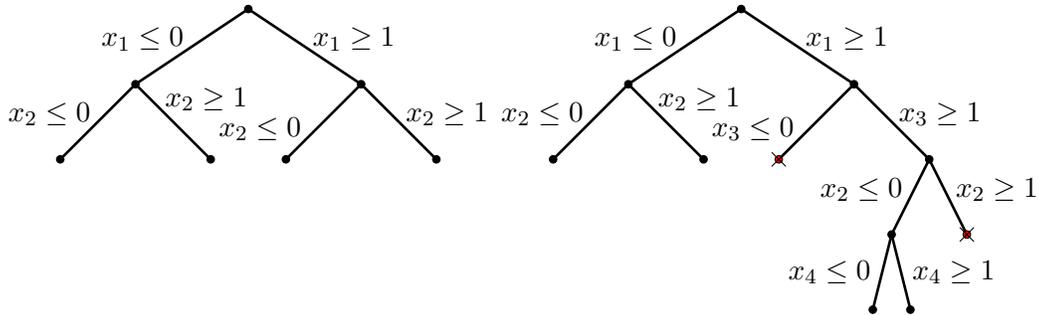

Disjunctive cuts coming from partial branch-and-bound trees have previously been proposed and tested in several contexts, indicating the potential impact of a more efficient method for obtaining such cuts.
The majority of these previous experiments rely on variants of the higher-dimensional CGLP, e.g., in the context of the cutting plane tree algorithm~\cite{CheKucSen11_finite-disjunctive-programming-characterizations,CheKucSen12_computational-study-cutting-plane-tree} and in stochastic mixed-integer programming applications~\cite{SenShe06_decomposition-with-branch-and-cut-for-two-stage-stochastic-mip,NtaTan08,YuaSen09,Ntaimo10,GadKucSen14,QiSen17}.
A famous example is the computational experience of solving the \emph{seymour} problem with the aid of L\&PCs, as documented by \citet*{FerPatSch01}. %

Methods resembling the VPC approach include the aforementioned cuts from the $\mathcal{V}$-polyhedral perspective based on row generation~\cite{PerBal01,LouPoiSal15}.
One takeaway from the paper by \citet{PerBal01} is a comparison to generating disjunctive cuts via solving the higher-dimensional CGLP: %
the authors conclude that solving the CGLP becomes relatively much slower as the number of terms of the disjunction grows, a result that would be more pronounced if the expense of row generation were avoided.
In other closely related work, \citet*{ChvCooEsp13} experiment with partial branch-and-bound trees as part of local or target cut algorithms~\cite{AppBixChvCoo95,BucLieOsw08},
which are dual to the separation schemes in this paper and that of \cite{PerBal01,LouPoiSal15}.
Most recently, \citet{CheLue23_sparse-multiterm-disj-cuts} generate objective cuts from multiterm disjunctions in the context of two-stage stochastic programs.

We refer the interested reader to the dissertation of \citet{Kazachkov18} for a more in-depth treatment of related literature.

\section{Choosing appropriate objectives}
\label{sec:obj-theory}

Having set up the constraints of the PRLP, we now analyze the theoretical strength of VPCs to drive our choices of objective functions $\nbspace{w}$ for \eqref{PRLP0}.
Choosing these carefully is critical to the success of any VPC algorithm, as the objectives directly determine the nature of the VPCs obtained.
Aside from the type of cuts obtained, it is also important to make the cut-generating process efficient.
We say that every time we solve \eqref{PRLP} and a new cut is not generated, a \emph{failure} occurs.
In an early implementation of VPCs, failures were frequent (over 85\% of the objectives tried).

One reason for failure is that \eqref{PRLP0} may be infeasible for a particular point-ray collection.
This occurs, for instance, if $\lpopt$ belongs to $P_D^0$ (feasible solutions of \eqref{PRLP0} are inequalities that separate $\lpopt$, the origin in the nonbasic space).
Figure~\ref{fig:non-facet2} actually illustrates such a situation. %
The next proposition gives a sufficient condition for the feasibility of \eqref{PRLP0} that we use in our implementation, though it can be extended to apply to \eqref{PRLP} more generally.
Let $\ref{p*}$ denote a point from $\pointset^0$ with minimum objective value, i.e.,
  \begin{equation}
    p^* \in \argmin_{t} \{c^\T p^t \suchthat t \in \mathcal{T}\} \rlap{.}
    \tag{\ensuremath{p^*}}\label{p*}
  \end{equation}

\begin{proposition} 
\label{prop:PRLP-feasible}
  If $\nbspace{c}^\T \nbspace{\ref{p*}} > 0$, then \eqref{PRLP0} is feasible.
\end{proposition}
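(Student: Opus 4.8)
The plan is to exhibit an explicit feasible solution to \eqref{PRLP0} built directly from the objective vector, rather than to reason about the feasible region abstractly. Working in the nonbasic space relative to $\lpopt$, the vector $\nbspace{c}$ is the vector of reduced costs at the \eqref{LP} optimum (so in particular $\nbspace{c} \ge 0$). I would propose the candidate cut-coefficient vector
$\nbspace{\alpha} \defeq \nbspace{c}/(\nbspace{c}^\T \nbspace{p}^*)$,
which is well defined precisely because the hypothesis $\nbspace{c}^\T \nbspace{p}^* > 0$ makes the denominator nonzero (and in particular forces $\nbspace{c} \ne 0$). It then remains only to check that this $\nbspace{\alpha}$ satisfies both families of constraints of \eqref{PRLP0}.

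For the ray constraints, I would argue that $\nbspace{c}^\T \nbspace{r}^{tj} = c^\T r^{tj} \ge 0$ for every $t \in \mathcal{T}$ and $j \in [n]$, using that the inner product of the objective with a ray direction is coordinate-free. Each $r^{tj}$ is an extreme ray of the basis cone $C^t$ emanating from $p^t$, and $p^t$ minimizes $c^\T x$ over \Pt{} with cobasis $\NB(p^t)$; hence the reduced cost of each nonbasic variable is nonnegative, which is exactly the statement that the objective does not decrease along any $r^{tj}$. Scaling by the positive factor $1/(\nbspace{c}^\T \nbspace{p}^*)$ preserves the sign, so $\nbspace{\alpha}^\T \nbspace{r}^{tj} \ge 0$.

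For the point constraints, I would use that $\nbspace{c}^\T \nbspace{p}^t = c^\T p^t - c^\T \lpopt \ge 0$, since $p^t \in \Pt \subseteq \ref{P}$ and $\lpopt$ is optimal for \eqref{LP}. By the definition of $p^*$ as the minimizer of $c^\T p^t$ over $t \in \mathcal{T}$, we have $\nbspace{c}^\T \nbspace{p}^* = \min_{t} \nbspace{c}^\T \nbspace{p}^t$, so $\nbspace{c}^\T \nbspace{p}^t \ge \nbspace{c}^\T \nbspace{p}^* > 0$ for every $t$. Dividing through gives $\nbspace{\alpha}^\T \nbspace{p}^t = \nbspace{c}^\T \nbspace{p}^t / \nbspace{c}^\T \nbspace{p}^* \ge 1$, as required. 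Together with the ray check, this shows $\nbspace{\alpha}$ is feasible and hence that \eqref{PRLP0} is feasible.

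The main obstacle I anticipate is the clean justification that $c^\T r^{tj} \ge 0$: it hinges on reading the extreme rays of the basis cone as the simplex edge directions at the optimal cobasis and on translating optimality of $p^t$ over \Pt{} into nonnegativity of the associated reduced costs, all while respecting the paper's convention that rays may be ``extended edges.'' I would want to state this step explicitly. It is also worth remarking that the hypothesis $\nbspace{c}^\T \nbspace{p}^* > 0$ is not merely technical scaffolding: it is exactly the condition making the normalization $\nbspace{\beta} = 1$ compatible with a separating inequality, so no stronger assumption is needed.
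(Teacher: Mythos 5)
Your proposal is correct and follows essentially the same route as the paper: both exhibit the explicit feasible solution $\nbspace{\alpha} = \nbspace{c}/\nbspace{c}^\T \nbspace{\ref{p*}}$ (the objective cut $c^\T x \ge c^\T \ref{p*}$), verify the point constraints from the minimality of $\ref{p*}$, and verify the ray constraints from the optimality of $p^t$ over $\Pt$ (equivalently, nonnegative reduced costs at the cobasis defining $C^t$). Your reduced-cost phrasing of the ray check is just a restatement of the paper's argument that $p^t$ remains optimal over $C^t$, so $c^\T r \ge 0$ along every extreme ray.
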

\begin{proof}
  It suffices to observe that $\nbspace{\alpha} = \nbspace{c} / \nbspace{c}^\T \nbspace{\ref{p*}}$ is a feasible solution to \eqref{PRLP0}, corresponding to the objective cut $\tilde{\alpha}^\T \tilde{x} \ge 1$, equivalently $c^\T x \ge c^\T \ref{p*}$.
  For $t \in \mathcal{T}$, $\nbspace{\alpha}^\T \nbspace{p}^t = \nbspace{c}^\T \nbspace{p}^t / \nbspace{c}^\T \nbspace{\ref{p*}} \ge 1$ by definition of $\ref{p*}$.
  For $r \in \rayset^0$, we need to show that ${c}^\T {r} \ge 0$.	
  This follows from the fact that $p^t \in \argmin_{x} \{c^\T x \suchthat x \in \Pt\}$,
  which implies that $p^t$ is also optimal when minimizing over $C^t$.
  For any ray $r \in C^t$, for all $\lambda > 0$, the point $p^t + \lambda r$ belongs to $C^t$.
  Hence, $c^\T p^t + \lambda c^\T r \ge c^\T p^t$.
\end{proof}
\noindent
This condition is satisfied, for example, whenever each disjunctive term's LP relaxation has an optimal objective value worse than that of the original LP.

The two other primary reasons for failures we observed were that, for a given objective direction $\nbspace{w}$: 
  (1) \eqref{PRLP0} was feasible but unbounded,
  or (2)~\eqref{PRLP0} had a finite optimal solution but the corresponding cut was a duplicate of a previously generated cut.
We will not be able to completely eliminate failures, but in the remainder of this section, we work towards choosing objectives that help reduce the failure rate.
At the same time, we will target generating strong VPCs, while mostly ignoring the potential effect of the cuts within branch and bound;
this latter goal is poorly understood and hence difficult to target directly.

The first candidate for an objective direction is to maximize the violation by $\lpopt$, as is done in the case of L\&PCs.
Unfortunately, in the nonbasic space and with $\beta = 1$, $\lpopt$ is simply the origin and all cuts have violation equal to $1$.
As proxies, we use two other objectives.
First, we try the all-ones objective, $\nbspace{w} = e$.
The interpretation is that we seek an inequality that puts equal weight on cutting each of the rays of the basis cone at $\lpopt$.
Second, we add to \ref{P} a round of GMICs, separated from $\lpopt$, and calculate a new optimal solution ${x}'$;
we then use $\nbspace{w} = \nbspace{{x}}'$, which finds a cut maximizing the violation with respect to ${x}'$.

Finding cuts that maximize violation with respect to points not in $P_D$ is a paradigm that may place too much emphasis on cutting away irrelevant parts of the relaxation.
The alternative is to find inequalities that minimize the slack with respect to points that do belong to $P_D$.
Within this latter perspective, we are able to utilize whatever structural information we possess about the disjunctive hull.
We will now discuss precisely what kind of information can be inferred from our $\mathcal{V}$-polyhedral relaxations of the disjunctive hull.

The next result states that, despite the vastly relaxed simple point-ray collection, the optimal value over the disjunctive hull can be obtained by optimizing over the points in $\pointset^0$.

\begin{proposition}
\label{prop:disjLB}
  The point $\ref{p*} \in \argmin_p \{c^\T p \suchthat p \in \pointset^0\}$ is also an optimal solution to $\min_{x} \{ c^\T x \suchthat x \in P_D^0 \}$ and to $\min_{x} \{ c^\T x \suchthat x \in P_D \}$.
  Moreover, there are $n$ facets of $P_D^0$ that can be added to \ref{P} such that $\ref{p*}$ is also an optimal solution to the resulting relaxation.
\end{proposition}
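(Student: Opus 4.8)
The plan is to reduce all three claims to a single elementary fact already recorded in the proof of Proposition~\ref{prop:PRLP-feasible}: every ray $r \in \rayset^0$ satisfies $c^\T r \ge 0$. This holds because each $p^t$ is optimal for $\min_x \{c^\T x \suchthat x \in \Pt\}$ and hence for the relaxation over its basis cone $C^t$, so moving along any extreme ray of $C^t$ cannot decrease the objective. I would state this as the first step and then exploit it throughout.

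For the first claim I would take an arbitrary $\hat x \in P_D^0$ and write it in its $\mathcal{V}$-polyhedral form $\hat x = \sum_t \mu_t p^t + \sum_{t,j} \nu_{tj} r^{tj}$ with $\mu_t, \nu_{tj} \ge 0$ and $\sum_t \mu_t = 1$. Applying $c^\T$ and combining $c^\T r^{tj} \ge 0$ with the defining property $c^\T p^t \ge c^\T p^*$ of $p^*$ yields $c^\T \hat x \ge \big(\sum_t \mu_t\big) c^\T p^* = c^\T p^*$. Since $p^* \in \pointset^0 \subseteq P_D^0$ attains this bound, it is optimal over $P_D^0$.

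The second claim is then immediate by sandwiching: Proposition~\ref{prop:VPC1} gives $P_D \subseteq P_D^0$, and $p^* = p^{t^*} \in P^{t^*} \subseteq P_D$. A point optimal over the larger set $P_D^0$ that happens to lie in the smaller set $P_D$ is a fortiori optimal over $P_D$.

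For the last claim I would invoke LP duality at the optimum $p^*$ of $\min_x \{c^\T x \suchthat x \in P_D^0\}$. Writing $P_D^0 = \{x \suchthat \alpha_i^\T x \ge \beta_i\}$ over its facets, optimality of $p^*$ means $c$ lies in the cone generated by the normals $\alpha_i$ of the facets tight at $p^*$; applying Carath\'eodory's theorem to this conic representation extracts a linearly independent subset $B$ with $\card{B} \le n$ and $c = \sum_{i \in B} y_i \alpha_i$, $y \ge 0$. Adding exactly these facets to \ref{P} keeps $p^*$ feasible (it lies in \ref{P} and is tight on each) and forces optimality, since any feasible $x$ obeys $c^\T x = \sum_{i \in B} y_i \alpha_i^\T x \ge \sum_{i \in B} y_i \beta_i = c^\T p^*$. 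To land on exactly $n$ facets I would pad $B$ with further facets of $P_D^0$, which preserve feasibility and hence optimality of $p^*$. I expect the counting step to be the main obstacle: guaranteeing that $n$ facets are available and that $p^*$ is realized as a vertex rather than sitting on a lower-dimensional optimal face. I would address this by arguing $p^*$ is a vertex of $P_D^0$, using that its basis cone $C^{t^*}$ is pointed, so that a basic optimal solution supplies $n$ linearly independent tight facets directly, with degeneracy (more than $n$ tight facets) handled by selecting $n$ whose normals still generate $c$.
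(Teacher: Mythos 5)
Your proof is correct and follows essentially the same route as the paper's, just written out in full. For the first two claims, the paper's proof is the single sentence that they are ``a direct consequence of the fact that $p^t \in \argmin_{x} \{c^\T x \suchthat x \in \Pt\}$ for all $t \in \mathcal{T}$''; your decomposition of an arbitrary $\hat{x} \in P_D^0$ over $(\pointset^0,\rayset^0)$, combined with $c^\T r \ge 0$ for every ray (exactly the fact recorded in the proof of Proposition~\ref{prop:PRLP-feasible}) and the sandwich $P_D \subseteq P_D^0$, is precisely that consequence made explicit. For the last claim, the paper takes ``the $n$ inequalities determined by the cobasis of $p^*$ from solving $\min_{x}\{c^\T x \suchthat x \in P_D^0\}$''; dual feasibility of an optimal cobasis says exactly that $c$ is a nonnegative combination of the normals of $n$ linearly independent tight constraints, which is the same certificate you extract via the normal cone and Carath\'eodory. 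Your version is in fact slightly more robust: the padding step needs only that $p^*$ satisfies all facets of $P_D^0$, not that it is a vertex, whereas the paper's cobasis implicitly presumes a basic optimal solution at $p^*$. One caveat on your fallback argument: pointedness of $C^{t^*}$ does \emph{not} imply $p^*$ is a vertex of $P_D^0$ --- with objective ties one can have, say, $p^1 = p^2 + r$ for a ray $r$ with $c^\T r = 0$, so $p^*$ may sit on a higher-dimensional optimal face, and in such degenerate cases $P_D^0$ may not even possess $n$ facets. But that sub-argument is unnecessary given your Carath\'eodory-plus-padding route, and these pathological cases are equally ignored by the paper's own one-line proof.
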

\begin{proof}
  This is a direct consequence of the fact that $p^t \in \argmin_{x} \{c^\T x \suchthat x \in \Pt\}$ for all $t \in \mathcal{T}$.
  The $n$ inequalities are simply the ones determined by the cobasis of $\ref{p*}$ from solving $\min_{x} \{ c^\T x \suchthat x \in P_D^0 \}$.
\end{proof}

We say that $c^\T \ref{p*}$ is the \emph{disjunctive lower bound} and examine whether we can achieve it via VPCs.
Note that we can always add the inequality $c^\T x \ge c^\T \ref{p*}$, but this is generally counterproductive, as such objective cuts tend to create multiple optimal solutions to the subsequent relaxation, which cause difficulties for solvers.

By Proposition~\ref{prop:disjLB}, we know that we can attain the disjunctive lower bound via $n$ facets of the point-ray hull that are tight at $\ref{p*}$.
One way to generate a cut tight at $\ref{p*}$ is to use the objective direction $\nbspace{w} = \nbspace{\ref{p*}}$.
Absent numerical issues, the optimal solution will be some $\bar{\alpha}$ such that $\nbspace{\bar{\alpha}}^\T \nbspace{\ref{p*}} = 1$.
Though this is only one cut, it can be used to find other objective directions.
We will work with a modified \eqref{PRLP0}, which we refer to as \emph{\PRLPeq{}},
in which the constraint $\nbspace{\alpha}^\T \nbspace{\ref{p*}} \ge 1$ is changed to $\nbspace{\alpha}^\T \nbspace{\ref{p*}} = 1$.
Let $\overline{\rayset}$ be the set of rays from $\rayset^0$ that are not tight for the cut $\nbspace{\bar{\alpha}}^\T \nbspace{x} \ge 1$, i.e., 
  $\overline{\rayset} \defas \{r \in \rayset^0 \suchthat \bar{\alpha}^\T r > 0\}$.

\begin{proposition}
\label{prop:PRLPeq}
  \PRLPeq{} with objective direction $\nbspace{w} = \nbspace{{r}}$, ${r} \in \overline{\rayset}$, has a finite optimal solution.
  The optimal value is strictly less than $\nbspace{\bar{\alpha}}^\T \nbspace{{r}}$ only if the resulting cut is distinct from $\nbspace{\bar{\alpha}}^\T \nbspace{x} \ge 1$.
  The optimal value is zero if and only if there exists a facet of $P_D^0$ that cuts $\lpopt$ and is tight on ${r}$.
\end{proposition}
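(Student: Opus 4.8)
The plan is to handle the three assertions in turn, leaning on the vertex--facet correspondence for \eqref{PRLP0} from Theorem~\ref{thm:PRLP-facets} and the basic-solution argument preceding Theorem~\ref{thm:simple-facets}. For the finiteness claim, I would first note that \PRLPeq{} is nonempty: the cut $\bar\alpha$ attains the minimum value $1$ of $\nbspace{\alpha}^\T \nbspace{p}^*$ over \eqref{PRLP0}, so $\nbspace{\bar\alpha}^\T \nbspace{p}^* = 1$ and $\bar\alpha$ satisfies the equality row and every remaining constraint, i.e., it is feasible for \PRLPeq{}. Boundedness is immediate, since $r \in \overline{\rayset} \subseteq \rayset^0$ means that $\nbspace{\alpha}^\T \nbspace{r} \ge 0$ is itself one of the rows, so the objective is bounded below by $0$. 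A feasible linear program with bounded objective attains its optimum, which yields the finite optimal solution.

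The second assertion is a bookkeeping consequence of the first. Because $\bar\alpha$ is feasible, the optimal value is at most $\nbspace{\bar\alpha}^\T \nbspace{r}$. If it is strictly smaller, then any optimizer $\alpha^*$ satisfies $\nbspace{\alpha^*}^\T \nbspace{r} < \nbspace{\bar\alpha}^\T \nbspace{r}$, so $\alpha^* \ne \bar\alpha$; since our normalization fixes $\beta = 1$, distinct coefficient vectors give distinct inequalities, and hence the resulting cut differs from $\nbspace{\bar\alpha}^\T \nbspace{x} \ge 1$.

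For the third assertion I would argue both directions through the same correspondence, after recording that the feasible region of \PRLPeq{} is pointed: it sits inside the feasible region of \eqref{PRLP0}, which lies in the simplicial (hence pointed) cone carved out by the $n$ linearly independent ray constraints of any single term, and slicing by the affine equality preserves pointedness. For the forward direction, if the optimal value is $0$ it is attained at a vertex $\alpha^*$; the equality row $\nbspace{\alpha}^\T \nbspace{p}^* = 1$ is active and $\nbspace{\alpha^*}^\T \nbspace{r} = 0$, so $\alpha^*$ is tight on $p^*$ and on $r$. Choosing $n$ linearly independent active rows (one of which is the point $p^*$) and using $p^*$ as the base point for the tight rays, exactly as in the argument before Theorem~\ref{thm:simple-facets}, produces $n$ affinely independent points of $P_D^0$ on the hyperplane $\nbspace{\alpha^*}^\T \nbspace{x} = 1$; with validity this certifies that $\alpha^*$ is a facet of $P_D^0$, and it cuts $\lpopt$ because the origin gives $\nbspace{\alpha^*}^\T \cdot 0 = 0 < 1$. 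For the reverse direction, given a facet $\nbspace{\gamma}^\T \nbspace{x} \ge \delta$ of $P_D^0$ that cuts $\lpopt$ (so $\delta > 0$) and is tight on $r$, I would rescale by $\delta$ to normalize $\beta = 1$; validity makes the rescaled $\gamma$ satisfy the inequality rows of \PRLPeq{}, and tightness on $r$ drives the objective to $0$.

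The reverse direction is where I expect the real difficulty, and it is the main obstacle. Feasibility for \PRLPeq{} demands the equality $\nbspace{\alpha}^\T \nbspace{p}^* = 1$, so the rescaled facet is admissible only if it is \emph{also} tight on $p^*$: a facet that is tight on $r$ but strictly separates $p^*$ cannot simply be rescaled to touch $p^*$ without destroying validity. The forward direction delivers a facet tight on both $p^*$ and $r$ automatically, so the substance of the equivalence concerns facets of $P_D^0$ that are simultaneously tight on $p^*$ and on $r$, equivalently whether the $p^*$-face of the \eqref{PRLP0} polyhedron meets the face on which $r$ is tight. Making this role of $p^*$-tightness explicit—rather than treating ``tight on $r$'' in isolation—is the step that a fully rigorous proof must handle with care.
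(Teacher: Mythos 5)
Your treatment of the first two statements and of the ``only if'' half of the third matches the paper's (much terser) proof, just with the details written out: feasibility of \PRLPeq{} via $\bar\alpha$ (which satisfies $\nbspace{\bar\alpha}^\T\nbspace{p^*}=1$) together with the row $\nbspace{\alpha}^\T\nbspace{r}\ge 0$ gives a finite optimum; the second claim is bookkeeping; and a zero optimal value, attained at a vertex of the pointed feasible region of \PRLPeq{}, yields a facet of $P_D^0$ that cuts $\lpopt$ and is tight on $r$, exactly by the vertex-to-facet correspondence you sketch.

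The obstruction you raise against the ``if'' direction is not a defect of your write-up; it is a genuine gap, and the paper's own one-line proof has it too. The paper appeals to a ``one-to-one correspondence between basic feasible solutions to \PRLPeq{} and facets of $P_D^0$ that cut away $\lpopt$,'' but since every feasible solution of \PRLPeq{} satisfies $\nbspace{\alpha}^\T\nbspace{p^*}=1$, the correspondence is really with facets that cut $\lpopt$ \emph{and are tight at $p^*$}; dropping that qualifier makes the ``if'' direction fail for legitimate point-ray data. Concretely (in the nonbasic space, with every standing hypothesis satisfied: each $p^t$ minimizes $c$ over its basis cone, and $\nbspace{c}^\T\nbspace{p^*}>0$), take $n=3$, $c=(2,0,1)$, $p^*=p^1=(0,0,2)$, $p^2=(1,0,1)$, and let both basis cones have rays $(1,0,0)$, $(0,1,0)$, $(0,0,1)$. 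Then $P_D^0=\{x\in\R^3 \suchthat x_1\ge 0,\ x_2\ge 0,\ x_3\ge 1,\ x_1+x_3\ge 2\}$, the constraints of \eqref{PRLP0} read $2\alpha_3\ge 1$, $\alpha_1+\alpha_3\ge 1$, $\alpha\ge 0$, and minimizing $\nbspace{\alpha}^\T\nbspace{p^*}$ gives $\bar\alpha=(1/2,0,1/2)$, so $r=(1,0,0)\in\overline{\rayset}$. Here \PRLPeq{} reduces to $\{\alpha_3=1/2,\ \alpha_1\ge 1/2,\ \alpha_2\ge 0\}$, whose optimal value with objective $r$ is $1/2>0$; yet the facet $x_3\ge 1$ of $P_D^0$ cuts $\lpopt$ and is tight on $r$ (it even contains the whole ray emanating from $p^2$) --- it is simply slack at $p^*$, which is exactly why \PRLPeq{} cannot reach it. So your diagnosis is correct: the third statement, and the correspondence invoked to prove it, must carry the additional requirement that the facet be tight at $p^*$; with that amendment, your rescaling argument for ``if'' and your vertex argument for ``only if'' close the proof.
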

\begin{proof}
  The fact that \PRLPeq{} is finite and bounded is a direct result of the constraint $\nbspace{\alpha}^\T \nbspace{{r}} \ge 0$ and the feasibility of \PRLPeq{}.
  The second statement is obvious.
  The last statement comes from the one-to-one correspondence between basic feasible solutions to \PRLPeq{} and facets of $P_D^0$ that cut away $\lpopt$.
\end{proof}

Propostion~\ref{prop:PRLPeq} resolves issue~{(1)} mentioned above, of having a feasible PRLP that is unbounded.
Unfortunately, we may still get failures from issue~{(2)}, meaning the optimal solution to \PRLPeq{} corresponds to a cut we previously generated.
For example, it may be the case that there exists $r \in \overline{R}$ such that, for all $\nbspace{\alpha}$ feasible to \PRLPeq{}, $\nbspace{\alpha}^\T \nbspace{r} \ge \nbspace{\bar{\alpha}}^\T \nbspace{r}$.
Using such an $r$ as the objective for \PRLPeq{} could reproduce the solution $\bar{\alpha}$.
To prevent such phenomena from excessively slowing down cut generation, we add a failure rate parameter, which detects when there is an unacceptably small percent of the objectives successfully producing new cuts, triggering early termination of the procedure.
We discuss this further in
the extended manuscript~\cite{BalKaz22+_vpc-arxiv}.

Lastly, we address a practical question: will we always attain the disjunctive lower bound $c^\T \ref{p*}$ via simple VPCs?
On one hand, Proposition~\ref{prop:disjLB} states that this bound is attainable via $n$ facets of the point-ray hull tight at $\ref{p*}$ (which may not be facets of $P_D$).
However,
the answer to this question can be no, given our algorithmic choices.
To see why, we need to understand which inequalities can be generated from \eqref{PRLP0} in our setup.
The specific modifications we have made from the general case are that we fix $\beta > 0$ and work in the nonbasic space in which $\lpopt$ is the origin.
This implies that we will never generate any facet-defining inequalities for $P_D^0$ that are satisfied by $\lpopt$.
One might initially assume that these inequalities are not necessary in order to attain the bound $c^\T \ref{p*}$.
  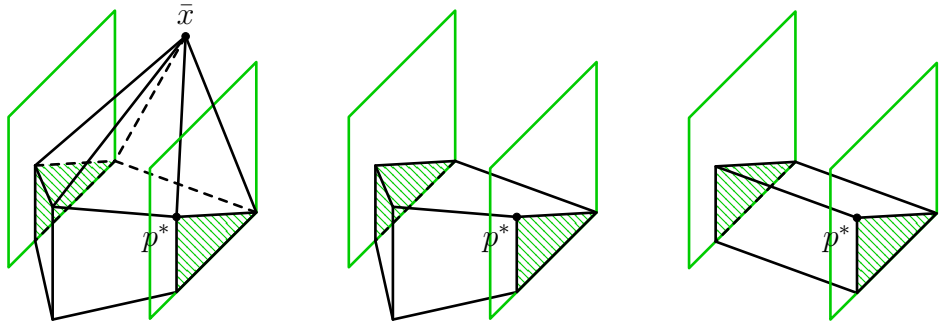
\begin{figure}
    \centering
    \begin{subfigure}{.33\linewidth}
    \centering
      \begin{tikzpicture}[line join=round,line cap=round,x={(\xX cm, \xY cm)},y={(\yX cm, \yY cm)},z={(\zX cm,\zY cm)},>=stealth,scale=2] 
        \coordinate (p11) at (0,1/4,0);  
        \coordinate (p12) at (0,1/4,1/2);
        \coordinate (p13) at (0,1,0);
        
        \coordinate (p1) at (1/2, -1/4, 0);
        \coordinate (p2) at (1/2, -1/4, 3/4);
        \coordinate (p3) at (1/2, 1, 1);

        \coordinate (p21) at (1,1/4,0);  
        \coordinate (p22) at (1,1/4,1/2);
        \coordinate (p23) at (1,1,0);
        
        \draw [fill opacity=0.5,fill=none, color=green!80!black, line width=1pt] (0,0,0) -- (0,1,0) -- (0,1,1) -- (0,0,1) -- cycle;
        
        \path [pattern=north west lines, pattern color=green!80!black] (p11) -- (p12) -- (p13) -- cycle;
        \draw [line width=1pt] (p11) -- (p12);
        \draw [line width=1pt, dashed] (p12) -- (p13) -- (p11);
        \draw [line width=1pt] (p1) -- (p2) -- (p3);
        \draw [line width=1pt] (p11) -- (p1) -- (p21);
        \draw [line width=1pt] (p12) -- (p2) -- (p22);
        \draw [line width=1pt] (p3) -- (p12);
        \draw [line width=1pt] (p3) -- (p22);
        \draw [line width=1pt, dashed] (p3) -- (p13);
        \draw [line width=1pt, dashed] (p13) -- (p23);
        
        \draw [fill opacity=0.5,fill=none, color=green!80!black, line width=1pt] (1,0,0) -- (1,1,0) -- (1,1,1) -- (1,0,1) -- cycle;
        \draw [line width=1pt] (p3) -- (p23);
        \draw [line width=1pt, pattern=north west lines, pattern color=green!80!black] (p21) -- (p22) -- (p23) -- cycle;
        
        \node [draw, point, fill=black, label={[label distance=-4pt]215: $\ref{p*}$}] at (p22) {};
        \node [draw, point, fill=black, label={[label distance=-1pt]above: $\lpopt$}] at (p3) {};
      \end{tikzpicture}
    \end{subfigure}%
    \begin{subfigure}{.33\linewidth}
    \centering
      \begin{tikzpicture}[line join=round,line cap=round,x={(\xX cm, \xY cm)},y={(\yX cm, \yY cm)},z={(\zX cm,\zY cm)},>=stealth,scale=2]
        \draw [fill opacity=0.5,fill=none, color=green!80!black, line width=1pt] (0,0,0) -- (0,1,0) -- (0,1,1) -- (0,0,1) -- cycle;
        
        \path [pattern=north west lines, pattern color=green!80!black] (p11) -- (p12) -- (p13) -- cycle;
        \draw [line width=1pt] (p11) -- (p12) -- (p13);
        \draw [line width=1pt, dashed] (p13) -- (p11);
        \draw [line width=1pt] (p1) -- (p2);
        \draw [line width=1pt] (p11) -- (p1) -- (p21);
        \draw [line width=1pt] (p12) -- (p2) -- (p22);
        \draw [line width=1pt] (p13) -- (p23);
        
        \draw [fill opacity=0.5,fill=none, color=green!80!black, line width=1pt] (1,0,0) -- (1,1,0) -- (1,1,1) -- (1,0,1) -- cycle;
        \draw [line width=1pt, pattern=north west lines, pattern color=green!80!black] (p21) -- (p22) -- (p23) -- cycle;
        
        \node [draw, point, fill=black, label={[label distance=-4pt]215: $\ref{p*}$}] at (p22) {};
      \end{tikzpicture}
    \end{subfigure}%
    \begin{subfigure}{.33\linewidth}
    \centering
      \begin{tikzpicture}[line join=round,line cap=round,x={(\xX cm, \xY cm)},y={(\yX cm, \yY cm)},z={(\zX cm,\zY cm)},>=stealth,scale=2]
        \draw [fill opacity=0.5,fill=none, color=green!80!black, line width=1pt] (0,0,0) -- (0,1,0) -- (0,1,1) -- (0,0,1) -- cycle;
        
        \path [pattern=north west lines, pattern color=green!80!black] (p11) -- (p12) -- (p13) -- cycle;
        \draw [line width=1pt] (p11) -- (p12) -- (p13);
        \draw [line width=1pt, dashed] (p13) -- (p11);
        \draw [line width=1pt] (p11) -- (p21);
        \draw [line width=1pt] (p12) -- (p22);
        \draw [line width=1pt] (p13) -- (p23);
        
        \draw [fill opacity=0.5,fill=none, color=green!80!black, line width=1pt] (1,0,0) -- (1,1,0) -- (1,1,1) -- (1,0,1) -- cycle;
        \draw [line width=1pt, pattern=north west lines, pattern color=green!80!black] (p21) -- (p22) -- (p23) -- cycle;
        \node [draw, point, fill=black, label={[label distance=-4pt]215: $\ref{p*}$}] at (p22) {};
      \end{tikzpicture}
    \end{subfigure} 
    \caption{Example that shows an inequality tight at $\protect{\ref{p*}}$ that does not cut away $\lpopt$ may be necessary for achieving the bound $c^\T \protect{\ref{p*}}$. In this example, we assume we are maximizing along the vertical axis. The first panel shows the original polytope. The second panel is the polytope after adding the only split inequality that cuts away $\lpopt$. The third panel shows the polytope after adding all the valid split cuts.}
    \label{fig:non-cut}
  \end{figure}
We dispel that notion in Figure~\ref{fig:non-cut}.
The example demonstrates that ignoring inequalities that do not cut away $\lpopt$ may lead to an optimal value (after adding cuts) that is strictly better than $c^\T \ref{p*}$.
Note that in this example, the point-ray collection uses the complete $\mathcal{V}$-polyhedral description of each $\Pt$, so, unlike the example in Figure~\ref{fig:non-facet}, this situation is not a consequence of using a relaxation of $P_D$.
This may partially explain why, in our experiments, despite our careful objective choices, we do not always obtain the bound $c^\T \ref{p*}$.

\section{Computational results}
\label{sec:computation}

Our computational experiments have two goals:
(1) assess the strength of VPCs by the percent root gap closed by one round of the cuts, which we discuss in Section~\ref{sec:vpc-gap-closed}, and 
(2) evaluate the effectiveness of VPCs when added at the root and used as part of branch and bound, covered in Section~\ref{sec:bb-time}.
Before presenting our results, we review our algorithmic choices in Section~\ref{sec:computational-setup}.

\subsection{Computational setup}
\label{sec:computational-setup}
The $\mathcal{V}$-polyhedral framework we have introduced is quite general, and there are many possibilities for implementing it.
We experiment with only a small subset of the possible parameters, 
so a more thorough tuning may improve upon our reported results.
Algorithm~\ref{alg:VPC1} summarizes our choices for generating VPCs;
more details can be found in 
the extended manuscript~\cite{BalKaz22+_vpc-arxiv}.
All experiments are performed on computers equipped with i9-13900K CPUs.
Our implementation is in \Cpp{} in the COIN-OR framework~\cite{COIN-OR} using \solver{Clp}~\cite{Clp} and \solver{Cbc}~\cite{Cbc} as the underlying linear programming and branch-and-bound solvers to generate VPCs.
\solver{Gurobi 10.0.3}~\cite{Gurobi10.0.3} is used to test the effectiveness of VPCs when embedded in branch and bound, averaged across 7 random seeds.

{
\begin{algorithm}[t]
\caption{Type 1 $\mathcal{V}$-Polyhedral Cuts}\label{alg:VPC1}
\begin{algorithmic}[1]
\algnotext{EndIf}
\algnotext{EndFor}
\algnotext{EndFunction}
\Input Polyhedron \ref{P}; objective direction $c$; disjunction $\vee_{t \in \mathcal{T}} \Pt$. %
  \For{$t \in \mathcal{T}$}
    \State $p^t \gets$ optimal solution to $\min_{x} \{c^\T x \suchthat x \in \Pt\}$.
    \State $C^t \gets$ basis cone with respect to a cobasis of $p^t$.
    \State $\pointset^t \gets \{p^t\}$ and $\rayset^t \gets$ extreme rays of $C^t$.
  \EndFor
  \State $\pointset \gets \cup_{t \in \mathcal{T}} \pointset^t$, $\rayset \gets \cup_{t \in \mathcal{T}} \rayset^t$, $\mathcal{C} \gets \emptyset$.
  \State Solve \eqref{PRLP0} with all-ones objective $\nbspace{w} = e$; add resulting cut to $\mathcal{C}$.
  \State Add GMICs to \ref{P}; let ${x}'$ be an optimal solution to the new relaxation.
  \State Solve \eqref{PRLP0} with $\nbspace{w} = \nbspace{x}'$ and add the resulting cut to $\mathcal{C}$. \label{step:VPC1:postGMIC}
  \State $\ref{p*} \gets$ point from $\argmin_{p} \{c^\T p \suchthat p \in \pointset\}$.
  \State Solve \eqref{PRLP0} with $\nbspace{w} = \nbspace{\ref{p*}}$ and add the resulting cut, $\nbspace{\bar{\alpha}}^\T \nbspace{x} \ge 1$, to $\mathcal{C}$.
  \State \PRLPeq{} $\gets$ \eqref{PRLP0} with added constraint $\nbspace{\alpha}^\T \nbspace{\ref{p*}} = 1$.
  \State $\overline{\pointset} \gets \{p \in \pointset \suchthat \nbspace{\bar{\alpha}}^\T \nbspace{p} > 1\}$, sorted in order of decreasing angle with $c$.
  \State $\overline{\rayset} \gets \{r \in \rayset \suchthat \nbspace{\bar{\alpha}}^\T \nbspace{r} > 0\}$, sorted in order of decreasing angle with $c$.
  \ForAll{${w}' \in \overline{\pointset} \cup \overline{\rayset}$} \label{step:VPC1:DB}
    \State Solve \PRLPeq{} with $\nbspace{w} = \nbspace{{w}}'$ and add the resulting cut $\nbspace{\bar{\alpha}}^\T \nbspace{x} \ge 1$ to $\mathcal{C}$. 
    \State Remove from $\overline{\pointset} \cup \overline{\rayset}$ all points and rays that are tight for $\nbspace{\bar{\alpha}}^\T \nbspace{x} \ge 1$.
    \State If number of objectives tried is two times the cut limit, then break. %
  \EndFor
  \State\Return Set $\mathcal{C}$ of generated cuts. \label{step:VPC1:return}
\end{algorithmic}
\end{algorithm}
} %

\paragraph{Instance selection.}
Our \numgapinst{} test instances come from the union of the MIPLIB~\cite{MIPLIB,MIPLIB3,MIPLIB2003,MIPLIB2010,MIPLIB2017}, CORAL~\cite{CORAL}, and NEOS
sets, restricted to those with at most 5,000 rows and columns and by other criteria detailed in 
the extended manuscript~\cite{BalKaz22+_vpc-arxiv}.
Every instance is preprocessed once by \Gurobi{}'s presolve.

\paragraph{Generation of disjunction.}
The disjunctive terms provided as input to Algorithm~\ref{alg:VPC1} are the leaf nodes of a partial branch-and-bound tree terminated
after reaching $2^\ell$, $\ell \in [6]$, leaf nodes of the partial tree, which form the disjunction that we input to Algorithm~\ref{alg:VPC1}.
The $\mathcal{V}$-polyhedral relaxation for each term is the simple point-ray collection defined in Section~\ref{sec:simple-vpcs}.
The partial branch-and-bound tree is generated by the default node, variable, and branch selection rules for \texttt{Cbc}.
Generating the partial tree can at times be expensive, and we make no claim that our enumeration technique is the best for cut generation;
other, perhaps weaker but less costly, strategies also merit consideration~\cite{SheSmi12}.

\paragraph{Evaluation within branch and bound.}
The VPCs are given to \Gurobi{} at the root as \emph{user cuts}, which allows \Gurobi{} to use its internal cut selection criteria.
All of \Gurobi{}'s default parameters are used except the following:
   \begin{itemize}
     \item Set random seed to $i \cdot 628$, for $i \in \{1,\ldots,7\}$.
     \item Set a time limit of 3600 seconds.
     \item Set maximum number of threads to 1.
     \item Disable presolve. (Each instance is already presolved during preprocessing.)
     \item Set \texttt{PreCrush} to 1. (Required when adding user cuts.) %
   \end{itemize}

\paragraph{Cut generation limits.}
We only use one round of cuts with a generation limit of one hour, all cuts are rank one with respect to \ref{P}, and VPCs are unstrengthened (in the sense of coefficient modularization).%
\footnote{Efficiently strengthening general disjunctive cuts currently poses nontrivial barriers~\cite{KazBal23_strengthening-ipco}.}
Cut generation is abandoned when \eqref{PRLP0} is infeasible or fails to solve to optimality within a minute when using no objective, i.e., just the feasibility problem.
We generate at most as many VPCs as the number of integer variables that are fractional at $\lpopt$, i.e., the same as the limit on the number of Gomory cuts.
This is in order to enhance comparability, but we have no evidence that this is a good choice.

\mbox{}\\[1ex]
In summary, we do not vary the $\mathcal{V}$-polyhedral relaxation of each term, and we do not impose limits on cut orthogonality or maximum density.
For one of our experiments, we double the cut limit and use two cut rounds, but
these parameters merit further exploration; prior work has repeatedly demonstrated that appropriately choosing such values can mean the difference between an algorithm that works in practice and one that seems to produce negative results (see, e.g., \cite{BalCerCor96,FerPatSch01} and the discussion in \citet[Chapter~3]{Karamanov06_branch-and-cut}).

\subsection{Percent root gap closed}
\label{sec:vpc-gap-closed}
Table~\ref{tab:gap-closed-summary} provides a summary of the average percent gap closed by GMICs, VPCs, and VPCs used together with GMICs,
as well as the percent gap closed by one round of cuts at the root by \Gurobi{}
and after the last round of cuts added by \Gurobi{} at the root.
The extended manuscript~\cite{BalKaz22+_vpc-arxiv}
contains the values for all the instances.

In the tables,
``G'' refers to GMICs, 
``V'' refers to VPCs,
``$\max$(G,V)'' refers to the best result (per instance) between GMICs and VPCs,
``GurF'' refers to \Gurobi{} after one round of cuts at the root,
``GurL'' refers to \Gurobi{} after the last round of cuts at the root,
and
``DB'' refers to the value of the disjunctive lower bound $c^\T \ref{p*}$ from the partial branch-and-bound tree for each instance with 64 leaf nodes (an upper bound on the gap we can close using VPCs on their own).
Unless otherwise stated, the result shown for VPCs is the best across all partial tree sizes tested for that instance.
A finer level of analysis in this regard is given in Appendix~\ref{app:results-partial-tree-sizes}.

Column~1 indicates which instances are being considered in the corresponding row:\ 
	the first pair of rows concerns the \numgapinst{} instances for which the disjunctive lower bound is strictly greater than the LP optimal value, 
	the second pair of rows pertains to the subset of \numVgoodinst{} instances for which VPCs close at least 10\% of the integrality gap with respect to GMICs,
    while the third pair of rows reports on the subset of \numBinaryInstances{} pure binary instances.
The first row for each set gives the average for the percent gap closed across the instances.
The second row for each set shows the number of ``wins'',
where wins
for columns ``DB'', ``V'', and ``V+G'' are relative to column ``G'';
wins for ``V+GurF'' are counted with respect to column ``GurF'';
and wins for ``V+GurL'' are with respect to column ``GurL''.
An instance counts as a win when at least $10^{-3}$ percent more integrality gap is closed compared to the appropriate reference column.

Column~2 gives the number of instances in each set.
Next is given the percent gap closed by
    GMICs when they are added to the LP relaxation (column~4);
    the disjunctive lower bound from the partial tree with 64 leaf nodes (column~5;
    VPCs (column~6);
    the maximum per instance between GMICs and VPCs (column~7);
    GMICs and VPCs used together (column~8).
Columns~9 and 10 show the percent gap closed by 
    \Gurobi{} cuts from one round at the root, first without and then with VPCs added as user cuts.
Columns~11 and 12 show the same, but after the last round of cuts at the root.
The Gurobi-related columns use the average percent gap closed by \Gurobi{} across the 7 random seeds tested.

{
\sisetup{
    table-alignment-mode = format,
    table-number-alignment = center,
    table-format = 2.2,
}
\begin{table}
\renewcommand{\tabcolsep}{4pt}
\centering
\caption{
            Summary statistics for percent gap closed by VPCs.
            The wins row reports how many instances close at least $\epsilon$ more gap when comparing DB, V, V+G to G on its own, V+GurF to GurF, and V+GurL to GurL.
        }
\label{tab:gap-closed-summary}
\begin{adjustbox}{max width=1\textwidth}
\begin{tabular}{@{}l@{}
        S[table-format=2.0,table-auto-round,table-number-alignment=center]
        l
        *{1}{S[table-auto-round]}
        H
        *{8}{S[table-auto-round]}
        @{}}
\toprule
{Set} & {\# inst} & {Metric} & {G} & {R} & {DB} & {V} & {max(G,V)} & {V+G} & {GurF} & {V+GurF} & {GurL} & {V+GurL} \\
\midrule
{\multirow[c]{2}{*}{All}} & {\multirow[c]{2}{*}{\tablenum[table-format=3]{332}}} & Avg (\%) & 16.508148432312172 & 0.25674423491955295 & 18.369422595516696 & 12.00826462388178 & 23.00736907616349 & 23.91643511610843 & 28.92974415029431 & 34.8014543982641 & 48.319147189366504 & 52.80602695234524 \\
 &  & Wins &  &  & {\tablenum[table-format=3.0]{178}} & {\tablenum[table-format=3.0]{132}} &  & {\tablenum[table-format=3.0]{226}} &  & {\tablenum[table-format=3.0]{250}} &  & {\tablenum[table-format=3.0]{234}} \\
\midrule
{\multirow[c]{2}{*}{$\ge$10\%}} & {\multirow[c]{2}{*}{\tablenum[table-format=3]{116}}} & Avg (\%) & 16.804983554247737 & 0.6927611181643151 & 38.10016548909812 & 29.795586323445303 & 34.21955710959231 & 35.90407310132605 & 26.410744357400425 & 38.58808083881424 & 45.26830170027557 & 55.39673498133272 \\
 &  & Wins &  &  & {\tablenum[table-format=3.0]{100}} & {\tablenum[table-format=3.0]{92}} &  & {\tablenum[table-format=3.0]{112}} &  & {\tablenum[table-format=3.0]{101}} &  & {\tablenum[table-format=3.0]{107}} \\
\midrule
{\multirow[c]{2}{*}{Binary}} & {\multirow[c]{2}{*}{\tablenum[table-format=3]{65}}} & Avg (\%) & 13.641808347397628 & 0.0 & 21.79883756420864 & 17.30614615821823 & 25.477278114075148 & 26.393399749582038 & 19.229515393723048 & 31.255047689501886 & 32.93591266160529 & 42.79545328908983 \\
 &  & Wins &  &  & {\tablenum[table-format=3.0]{50}} & {\tablenum[table-format=3.0]{37}} &  & {\tablenum[table-format=3.0]{53}} &  & {\tablenum[table-format=3.0]{48}} &  & {\tablenum[table-format=3.0]{48}} \\
\bottomrule
\end{tabular}
\end{adjustbox}
\end{table}
}

The results indicate that VPCs are strong compared to existing cuts.
Namely, using VPCs and GMICs together leads the average percent gap closed at the root to increase from 16.5\% to 23.9\%.
VPCs on their own close strictly more gap than GMICs for 132 instances.
In comparison, for 178 instances, the disjunctive lower bound is greater than the optimal value after adding GMICs, 
so there are only 46 additional instances for which VPCs on their own could have gotten stronger results.
For 24 of those 46 instances, we achieve the cut limit, implying that a higher percent gap might be achieved if we permit more cuts to be generated.
VPCs used with GMICs together outperform GMICs for 226 of the \numgapinst{} instances.
Of the 106 instances in which VPCs and GMICs combined do not improve over GMICs alone,
	for 6 instances, GMICs already close \textasciitilde{}100\% of the gap;
	and for 33 instances, VPCs and GMICs together close 0\% of the gap.
The cut limit is achieved for 24 of the 106 nonimproving instances.

Perhaps even more indicative of the strength of VPCs is when VPCs are used as user cuts within \Gurobi{}, which may employ a variety of cut classes, not only GMICs.
For the first round of cuts at the root, the percent gap closed goes from 28.9\% (without VPCs) to 34.8\% (with them),
with strictly better outcomes for 250 of the \numgapinst{} instances.
For the last round of cuts at the root, the percent gap closed increases from 48.3\% to 52.8\% when using VPCs.

On average, VPCs without GMICs close less of the integrality gap than GMICs do on their own; this occurs for 165 of the \numgapinst{} instances.
We offer two plausible explanations for this phenomenon.
First, for 60 of the 165 instances, we generate very few VPCs compared to GMICs, which makes it difficult to compare the two families directly.
Another 61 of these instances hit the cut limit, so there is further potential for VPCs to improve.
Second, in these results, no \latin{a posteriori} strengthening techniques, such as modularization, are applied to VPCs, while GMICs do take advantage of modularization.

VPCs and GMICs seem to be strong for different types of instances.
One indication is from column ``$\max$(G,V)'': using the best result of only GMICs or only VPCs, per instance, the percent gap closed is 23\%, only 1\% less than combining both families together.
Further evidence comes from the ``$\ge$10\%'' set of instances.
VPCs and GMICs together close over double the percent gap closed by GMICs alone, with improvements for 112 of the \numVgoodinst{} instances in this set,
and VPCs provide a 22\% improvement in the gap closed after the last round of cuts at the root node of Gurobi (55.4\% compared to 45.3\%).
VPCs also outperform GMICs on pure binary instances, offering 30\% improvement in average root percent gap closed for Gurobi.

For the columns including VPCs, the result reported is the maximum percent gap closed across all partial tree sizes tested.
One may initially assume that the strongest cuts would always come from the partial tree with 64 leaf nodes.
This is indeed true for the disjunctive lower bound, but it does not always hold for VPCs for particular instances, though the \emph{average} gap closed across instances steadily increases.
One reason, meaningful in conjunction with the fact that we generate a fixed number of cuts, is that there are likely to be more facet-defining (i.e., essential) inequalities for the disjunctive hull from stronger disjunctions.
As a result, achieving the disjunctive lower bound may become more difficult, in particular given our relatively conservative cut limit.
Another reason, on an intuitive level, is that more of the facet-defining inequalities for the deeper disjunctions may not cut away $\lpopt$, which are cuts we do not generate in these experiments.
Finally, the rate of numerical issues goes up as the disjunctions get larger;
we investigate this more in Appendix~\ref{app:results-objectives}.

{
\sisetup{
    table-alignment-mode = format,
    table-number-alignment = center,
    table-format = 2.2,
}
\begin{table}
\small
\centering
\caption{
            Average percent gap closed broken down by the number of leaf nodes used to construct the partial branch-and-bound tree,
            for VPCs with and without GMICs, as well as at the root by \Gurobi{} after the first and last round of cuts. 
            ``Best'' refers to the maximum gap closed across all partial tree sizes.
        }
\label{tab:depth}
\begin{tabular}{@{}l
        *{6}{S[table-auto-round]}
        @{}}
\toprule
{} & {DB} & {V} & {max(G,V)} & {V+G} & {V+GurF} & {V+GurL} \\
\midrule
VPCs disabled & 0.0 & 0.0 & 16.508148432312172 & 16.508148432312172 & 28.92974415029431 & 48.319147189366504 \\
\midrule
2 leaves & 3.0145069679615464 & 2.213289365939913 & 16.78974579780376 & 17.206578963213257 & 30.96117050218173 & 49.260529886136254 \\
4 leaves & 5.270241662205749 & 3.5924977627287134 & 17.227532496610067 & 17.785030887929768 & 31.16440150982158 & 49.25214246288348 \\
8 leaves & 8.117860021905244 & 4.8474765479675845 & 17.828119270231515 & 18.533394239712695 & 31.526033648713845 & 49.484617007679496 \\
16 leaves & 11.173444037822007 & 6.456210140416377 & 19.373154541924333 & 20.029255265864183 & 32.470786341284295 & 50.2361881728273 \\
32 leaves & 14.671543975982372 & 8.741057007396707 & 20.957474003695218 & 21.77325829522081 & 33.41355016123777 & 51.14084061024237 \\
64 leaves & 18.369422595446917 & 10.233224438179658 & 22.139000791250773 & 22.887076103430644 & 33.82505312597258 & 51.638833320581234 \\
Best & 18.369422595516863 & 12.008264623888747 & 23.00736907616366 & 23.916435116108584 & 34.80145439826474 & 52.80602695234555 \\
\midrule
Combined & 17.795569396793535 & 12.487033115007888 & 23.20799743570185 & 24.123908672499724 & 34.46147366395937 & 52.07456654886541 \\
Rounds & 18.425291259114417 & 15.282946859099736 & 25.557770049311674 & 26.306358243597213 & 35.21185947277031 & 52.67563534101169 \\
\bottomrule
\end{tabular}
\end{table}

}

Table~\ref{tab:depth} shows how the average percent gap closed increases with disjunction size.
In this table, %
row ``Best'' corresponds to the same values as in Table~\ref{tab:gap-closed-summary},
i.e., the best value per instance is used across all partial tree sizes tested.
We show the same metrics as in Table~\ref{tab:gap-closed-summary}.
In this table, VPCs close more gap as stronger disjunctions are used.
However, we also see that there is much room for improvement in our implementation of the framework, as the gap closed by VPCs grows increasingly farther from the disjunctive lower bound with the use of stronger disjunctions.

We also report results from two additional experiments in Table~\ref{tab:depth}.
The penultimate row, ``Combined'', is obtained by applying all VPCs, across all disjunction sizes, 
though still constrained to an overall one-hour cut generation limit,
which is why the value in column ``DB'' is smaller than the corresponding value from the preceding row ``Best''.
The last row, ``Rounds'', reports the outcome of using ``Combined'' for two cut rounds, with doubling the cut limit per disjunction and per round.
The ``Rounds'' setting only contains results for 322 instances, so it cannot be exactly compared to the \numgapinst{} of the preceding rows.
Modulo this caveat, it can be seen that the percent gap closed by VPCs alone increases from 12.5\% to 15.3\%, but the impact on Gurobi in column ``V+GurL'' appears to be relatively minor.
For comparison, two rounds of GMICs correspondingly increases from 16.5\% to 22.3\%.

An important conclusion from Table~\ref{tab:depth} is that our procedure may help in avoiding the ``tailing-off'' effect from recursive applications of cuts: without requiring recursion, by simply using a (sufficiently) stronger disjunction, we make relatively steady progress toward the optimal value of \eqref{IP}.
However, this is only in terms of percent gap closed; as we discuss in the next section, the story when using the cuts within branch and bound is completely different, in which seemingly weaker cuts may lead to better performance.

\subsection{Effect with branch and bound}
\label{sec:bb-time}

We now turn to the second metric: the effect of our cuts on branch and bound in terms of time and number of nodes.
We compare two solvers: ``Gur'' is the baseline of default Gurobi, and ``V'' denotes  \Gurobi{} with VPCs added as user cuts.
Table~\ref{tab:bb-summary-depth} contains a summary of the statistics for several instance sets, called
    ``Combined'', ``Rounds'', and ``$\ell$ leaves'' for $\ell \in \{2,4,8,16,32,64\}$,
where
    \begin{itemize}
        \item
            ``Combined'' refers to all VPCs generated within an hour across all disjunction sizes considered;
        \item
            ``Rounds'' refers to two rounds of ``Combined'' VPC generation with a cut limit of twice the number of fractional integer variables in $\lpopt$;
            and
        \item
            ``$\ell$ leaves'' refers to terminating partial branch-and-bound tree generation when there are $\ell$ open (leaf) nodes.
    \end{itemize}
Each instance is solved with 7 random seeds per solver.
For each seed that times out, for solving time, 7200 seconds (twice the time limit) is used, and for calculating nodes, the number of processed nodes is also multiplied by 2.
Within each set, we create four bins,
where bin [$t$,3600) contains the subset of the \numgapinst{} instances used in Section~\ref{sec:vpc-gap-closed} for which the average solution time (across 7 random seeds) is at least $t$ seconds for both solvers and under 3600 seconds for at least one solver.

The first column of Table~\ref{tab:bb-summary-depth} indicates which set and bin is being considered.
The second column is the number of instances in that subset.
The next column indicates the two summary statistics presented for each subset.
The first statistic row, ``Gmean'', for each subset is a shifted geometric mean, with a shift of 1 for time and 1000 for nodes, modified from \citet{Achterberg07}.
The second row, ``Wins'', reports the number of instances for which each of the two solver options ``Gur'' and ``V'' perform better, where
an instance counts as a win by time for a solver if the other solver has an average running time that is at least 10\% slower,
and an instance counts as a win by nodes for a solver when the number of nodes is strictly better.
The remaining columns contain the corresponding statistics for each solver and metric,
as well as column ``Gen'' that gives the geometric mean of cut generation time for instances within each set and bin.

From Table~\ref{tab:bb-summary-depth}, we conclude that adding VPCs tends to slightly degrade the running time of Gurobi,
even though the number of processed nodes frequently decreases.
For both the ``Combined'' and ``Rounds'' sets, the default ``Gur'' solver runs faster, especially when considering the running time reported in column ``Gen'', though there is a reduction in nodes for the $[1000,3600)$ bucket.
In the remaining sets, there are isolated cases of promise.
For example, under ``4 leaves'', we see that the geometric mean of the number of processed nodes decreases for the first three buckets when using VPCs,
though the corresponding geometric mean running times are about the same for both solvers.
There is a small improvement in running time for the ``harder'' instances, in the $[1000,3600)$ bucket, for several parameter settings: ``2 leaves'', ``8 leaves'', and ``16 leaves'' (even when considering cut generation time).
We caution that, despite our best efforts, we might not have completely removed confounding factors such as machine variability, which may cause inconsistent results when repeating the experiments.
Nevertheless, these results indicate that there is a significant portion of instances from the dataset that may benefit from VPCs.
At the same time, it is not clear how to identify such instances, effectively select VPC hyperparameters, or incorporate VPCs within Gurobi in general.

{%
\sisetup{
   table-alignment-mode = format,
   table-number-alignment = center,
   table-format = 4.2,
}
\setlength\LTleft{-21.038pt}%
 \begin{tabularx}{\textwidth}{@{}l    %
       c       %
       l       %
       *{3}{S[table-auto-round,table-format=4.2]}
       *{2}{S[table-auto-round,table-format=7.0]}
       @{}}
	\caption{%
	  Summary statistics when solving instances with branch and bound.
	}
    \label{tab:bb-summary-depth}\\[0pt]

\toprule
& & & \multicolumn{3}{c}{Time (s)}  & \multicolumn{2}{c}{Nodes (\#)}    \\ 
\cmidrule(lr){4-6} \cmidrule(l){7-8}
{Set} & {\# inst} & {Metric} & {Gur} & {V} & {Gen} & {Gur} & {V} \\
\midrule
\endfirsthead

\toprule
& & & \multicolumn{3}{c}{Time (s)}  & \multicolumn{2}{c}{Nodes (\#)}    \\ 
\cmidrule(lr){4-6} \cmidrule(l){7-8}
{Set} & {\# inst} & {Metric} & {Gur} & {V} & {Gen} & {Gur} & {V} \\
\midrule
\endhead

\endfoot

\endlastfoot
\multirow{2}{*}{\shortstack[l]{Combined\\\relax [0,3600)}} & {\multirow[c]{2}{*}{\tablenum[table-format=3]{288}}} & Gmean & 11.256821 & 11.839758 & 27.617049 & 7681.913224 & 7793.801266 \\
 &  & Wins & {\tablenum[table-format=4.0]{117.0}\phantom{.00}} & {\tablenum[table-format=4.0]{62.0}\phantom{.00}} &  & {\tablenum[table-format=6.0]{128.0}} & {\tablenum[table-format=6.0]{131.0}} \\
\multirow{2}{*}{\shortstack[l]{Combined\\\relax [10,3600)}} & {\multirow[c]{2}{*}{\tablenum[table-format=3]{119}}} & Gmean & 157.263482 & 168.453590 & 55.301806 & 75193.842935 & 76955.769766 \\
 &  & Wins & {\tablenum[table-format=4.0]{44.0}\phantom{.00}} & {\tablenum[table-format=4.0]{32.0}\phantom{.00}} &  & {\tablenum[table-format=6.0]{54.0}} & {\tablenum[table-format=6.0]{65.0}} \\
\multirow{2}{*}{\shortstack[l]{Combined\\\relax [100,3600)}} & {\multirow[c]{2}{*}{\tablenum[table-format=3]{69}}} & Gmean & 505.119151 & 550.557326 & 43.695307 & 244802.368364 & 255028.026777 \\
 &  & Wins & {\tablenum[table-format=4.0]{32.0}\phantom{.00}} & {\tablenum[table-format=4.0]{15.0}\phantom{.00}} &  & {\tablenum[table-format=6.0]{38.0}} & {\tablenum[table-format=6.0]{31.0}} \\
\multirow{2}{*}{\shortstack[l]{Combined\\\relax [1000,3600)}} & {\multirow[c]{2}{*}{\tablenum[table-format=3]{24}}} & Gmean & 1972.953117 & 2073.896266 & 35.858057 & 1008339.705759 & 996884.899183 \\
 &  & Wins & {\tablenum[table-format=4.0]{10.0}\phantom{.00}} & {\tablenum[table-format=4.0]{5.0}\phantom{.00}} &  & {\tablenum[table-format=6.0]{11.0}} & {\tablenum[table-format=6.0]{13.0}} \\
\midrule
\multirow{2}{*}{\shortstack[l]{Rounds\\\relax [0,3600)}} & {\multirow[c]{2}{*}{\tablenum[table-format=3]{279}}} & Gmean & 11.621243 & 12.470732 & 128.876724 & 7845.546959 & 7919.763758 \\
 &  & Wins & {\tablenum[table-format=4.0]{144.0}\phantom{.00}} & {\tablenum[table-format=4.0]{61.0}\phantom{.00}} &  & {\tablenum[table-format=6.0]{129.0}} & {\tablenum[table-format=6.0]{123.0}} \\
\multirow{2}{*}{\shortstack[l]{Rounds\\\relax [10,3600)}} & {\multirow[c]{2}{*}{\tablenum[table-format=3]{117}}} & Gmean & 155.225984 & 165.437991 & 216.395625 & 74002.424463 & 74706.008320 \\
 &  & Wins & {\tablenum[table-format=4.0]{45.0}\phantom{.00}} & {\tablenum[table-format=4.0]{34.0}\phantom{.00}} &  & {\tablenum[table-format=6.0]{58.0}} & {\tablenum[table-format=6.0]{59.0}} \\
\multirow{2}{*}{\shortstack[l]{Rounds\\\relax [100,3600)}} & {\multirow[c]{2}{*}{\tablenum[table-format=3]{64}}} & Gmean & 526.969209 & 558.173541 & 193.578409 & 235024.497883 & 241114.762824 \\
 &  & Wins & {\tablenum[table-format=4.0]{22.0}\phantom{.00}} & {\tablenum[table-format=4.0]{16.0}\phantom{.00}} &  & {\tablenum[table-format=6.0]{36.0}} & {\tablenum[table-format=6.0]{28.0}} \\
\multirow{2}{*}{\shortstack[l]{Rounds\\\relax [1000,3600)}} & {\multirow[c]{2}{*}{\tablenum[table-format=3]{23}}} & Gmean & 1868.614466 & 1982.474085 & 166.964579 & 1041127.748276 & 1032613.929929 \\
 &  & Wins & {\tablenum[table-format=4.0]{8.0}\phantom{.00}} & {\tablenum[table-format=4.0]{5.0}\phantom{.00}} &  & {\tablenum[table-format=6.0]{13.0}} & {\tablenum[table-format=6.0]{10.0}} \\
\midrule
\multirow{2}{*}{\shortstack[l]{2 leaves\\\relax [0,3600)}} & {\multirow[c]{2}{*}{\tablenum[table-format=3]{261}}} & Gmean & 9.264507 & 9.505446 & 0.561007 & 6309.540478 & 6452.580675 \\
 &  & Wins & {\tablenum[table-format=4.0]{64.0}\phantom{.00}} & {\tablenum[table-format=4.0]{52.0}\phantom{.00}} &  & {\tablenum[table-format=6.0]{102.0}} & {\tablenum[table-format=6.0]{135.0}} \\
\multirow{2}{*}{\shortstack[l]{2 leaves\\\relax [10,3600)}} & {\multirow[c]{2}{*}{\tablenum[table-format=3]{102}}} & Gmean & 129.679451 & 139.349050 & 1.188230 & 59630.814167 & 64032.410898 \\
 &  & Wins & {\tablenum[table-format=4.0]{33.0}\phantom{.00}} & {\tablenum[table-format=4.0]{22.0}\phantom{.00}} &  & {\tablenum[table-format=6.0]{44.0}} & {\tablenum[table-format=6.0]{58.0}} \\
\multirow{2}{*}{\shortstack[l]{2 leaves\\\relax [100,3600)}} & {\multirow[c]{2}{*}{\tablenum[table-format=3]{50}}} & Gmean & 503.496056 & 534.573401 & 1.003378 & 186171.780626 & 199675.888781 \\
 &  & Wins & {\tablenum[table-format=4.0]{13.0}\phantom{.00}} & {\tablenum[table-format=4.0]{10.0}\phantom{.00}} &  & {\tablenum[table-format=6.0]{18.0}} & {\tablenum[table-format=6.0]{32.0}} \\
\multirow{2}{*}{\shortstack[l]{2 leaves\\\relax [1000,3600)}} & {\multirow[c]{2}{*}{\tablenum[table-format=3]{15}}} & Gmean & 1974.557425 & 1894.638111 & 0.296151 & 799031.956077 & 729535.070710 \\
 &  & Wins & {\tablenum[table-format=4.0]{4.0}\phantom{.00}} & {\tablenum[table-format=4.0]{4.0}\phantom{.00}} &  & {\tablenum[table-format=6.0]{4.0}} & {\tablenum[table-format=6.0]{11.0}} \\
\midrule
\multirow{2}{*}{\shortstack[l]{4 leaves\\\relax [0,3600)}} & {\multirow[c]{2}{*}{\tablenum[table-format=3]{266}}} & Gmean & 8.813739 & 8.802836 & 1.188708 & 6017.400732 & 5963.029381 \\
 &  & Wins & {\tablenum[table-format=4.0]{70.0}\phantom{.00}} & {\tablenum[table-format=4.0]{55.0}\phantom{.00}} &  & {\tablenum[table-format=6.0]{112.0}} & {\tablenum[table-format=6.0]{127.0}} \\
\multirow{2}{*}{\shortstack[l]{4 leaves\\\relax [10,3600)}} & {\multirow[c]{2}{*}{\tablenum[table-format=3]{104}}} & Gmean & 117.380840 & 115.831482 & 2.375498 & 55206.090918 & 53955.814521 \\
 &  & Wins & {\tablenum[table-format=4.0]{26.0}\phantom{.00}} & {\tablenum[table-format=4.0]{29.0}\phantom{.00}} &  & {\tablenum[table-format=6.0]{49.0}} & {\tablenum[table-format=6.0]{55.0}} \\
\multirow{2}{*}{\shortstack[l]{4 leaves\\\relax [100,3600)}} & {\multirow[c]{2}{*}{\tablenum[table-format=3]{48}}} & Gmean & 466.634515 & 458.663953 & 2.557509 & 166328.206782 & 162475.081286 \\
 &  & Wins & {\tablenum[table-format=4.0]{12.0}\phantom{.00}} & {\tablenum[table-format=4.0]{12.0}\phantom{.00}} &  & {\tablenum[table-format=6.0]{19.0}} & {\tablenum[table-format=6.0]{29.0}} \\
\multirow{2}{*}{\shortstack[l]{4 leaves\\\relax [1000,3600)}} & {\multirow[c]{2}{*}{\tablenum[table-format=3]{13}}} & Gmean & 1802.023961 & 1875.014178 & 0.962807 & 790760.039514 & 792726.306079 \\
 &  & Wins & {\tablenum[table-format=4.0]{4.0}\phantom{.00}} & {\tablenum[table-format=4.0]{4.0}\phantom{.00}} &  & {\tablenum[table-format=6.0]{7.0}} & {\tablenum[table-format=6.0]{6.0}} \\
\midrule
\multirow{2}{*}{\shortstack[l]{8 leaves\\\relax [0,3600)}} & {\multirow[c]{2}{*}{\tablenum[table-format=3]{259}}} & Gmean & 9.186959 & 9.681584 & 2.211241 & 6717.703857 & 7034.712212 \\
 &  & Wins & {\tablenum[table-format=4.0]{89.0}\phantom{.00}} & {\tablenum[table-format=4.0]{46.0}\phantom{.00}} &  & {\tablenum[table-format=6.0]{127.0}} & {\tablenum[table-format=6.0]{107.0}} \\
\multirow{2}{*}{\shortstack[l]{8 leaves\\\relax [10,3600)}} & {\multirow[c]{2}{*}{\tablenum[table-format=3]{99}}} & Gmean & 139.393297 & 153.050294 & 4.716767 & 74523.482792 & 82415.763481 \\
 &  & Wins & {\tablenum[table-format=4.0]{42.0}\phantom{.00}} & {\tablenum[table-format=4.0]{24.0}\phantom{.00}} &  & {\tablenum[table-format=6.0]{58.0}} & {\tablenum[table-format=6.0]{41.0}} \\
\multirow{2}{*}{\shortstack[l]{8 leaves\\\relax [100,3600)}} & {\multirow[c]{2}{*}{\tablenum[table-format=3]{52}}} & Gmean & 510.319081 & 557.747570 & 5.163097 & 213765.198022 & 241449.818517 \\
 &  & Wins & {\tablenum[table-format=4.0]{24.0}\phantom{.00}} & {\tablenum[table-format=4.0]{13.0}\phantom{.00}} &  & {\tablenum[table-format=6.0]{29.0}} & {\tablenum[table-format=6.0]{23.0}} \\
\multirow{2}{*}{\shortstack[l]{8 leaves\\\relax [1000,3600)}} & {\multirow[c]{2}{*}{\tablenum[table-format=3]{16}}} & Gmean & 2046.171910 & 1965.601753 & 2.644587 & 750097.415912 & 709701.100125 \\
 &  & Wins & {\tablenum[table-format=4.0]{7.0}\phantom{.00}} & {\tablenum[table-format=4.0]{6.0}\phantom{.00}} &  & {\tablenum[table-format=6.0]{7.0}} & {\tablenum[table-format=6.0]{9.0}} \\
\midrule
\multirow{2}{*}{\shortstack[l]{16 leaves\\\relax [0,3600)}} & {\multirow[c]{2}{*}{\tablenum[table-format=3]{261}}} & Gmean & 9.216628 & 9.462807 & 3.909299 & 6964.502129 & 7019.523970 \\
 &  & Wins & {\tablenum[table-format=4.0]{78.0}\phantom{.00}} & {\tablenum[table-format=4.0]{63.0}\phantom{.00}} &  & {\tablenum[table-format=6.0]{104.0}} & {\tablenum[table-format=6.0]{133.0}} \\
\multirow{2}{*}{\shortstack[l]{16 leaves\\\relax [10,3600)}} & {\multirow[c]{2}{*}{\tablenum[table-format=3]{100}}} & Gmean & 139.444260 & 147.970481 & 6.658429 & 80057.848664 & 83603.489488 \\
 &  & Wins & {\tablenum[table-format=4.0]{30.0}\phantom{.00}} & {\tablenum[table-format=4.0]{28.0}\phantom{.00}} &  & {\tablenum[table-format=6.0]{45.0}} & {\tablenum[table-format=6.0]{55.0}} \\
\multirow{2}{*}{\shortstack[l]{16 leaves\\\relax [100,3600)}} & {\multirow[c]{2}{*}{\tablenum[table-format=3]{50}}} & Gmean & 535.232775 & 562.072782 & 7.832284 & 226992.478009 & 236017.937245 \\
 &  & Wins & {\tablenum[table-format=4.0]{15.0}\phantom{.00}} & {\tablenum[table-format=4.0]{14.0}\phantom{.00}} &  & {\tablenum[table-format=6.0]{21.0}} & {\tablenum[table-format=6.0]{29.0}} \\
\multirow{2}{*}{\shortstack[l]{16 leaves\\\relax [1000,3600)}} & {\multirow[c]{2}{*}{\tablenum[table-format=3]{18}}} & Gmean & 1891.260914 & 1824.106626 & 4.820112 & 995349.746343 & 889380.207280 \\
 &  & Wins & {\tablenum[table-format=4.0]{5.0}\phantom{.00}} & {\tablenum[table-format=4.0]{7.0}\phantom{.00}} &  & {\tablenum[table-format=6.0]{4.0}} & {\tablenum[table-format=6.0]{14.0}} \\
\midrule
\multirow{2}{*}{\shortstack[l]{32 leaves\\\relax [0,3600)}} & {\multirow[c]{2}{*}{\tablenum[table-format=3]{246}}} & Gmean & 8.354017 & 8.660765 & 7.333595 & 6474.155040 & 6411.111746 \\
 &  & Wins & {\tablenum[table-format=4.0]{85.0}\phantom{.00}} & {\tablenum[table-format=4.0]{53.0}\phantom{.00}} &  & {\tablenum[table-format=6.0]{104.0}} & {\tablenum[table-format=6.0]{117.0}} \\
\multirow{2}{*}{\shortstack[l]{32 leaves\\\relax [10,3600)}} & {\multirow[c]{2}{*}{\tablenum[table-format=3]{91}}} & Gmean & 128.969218 & 136.264026 & 13.085646 & 78583.258429 & 77145.758037 \\
 &  & Wins & {\tablenum[table-format=4.0]{30.0}\phantom{.00}} & {\tablenum[table-format=4.0]{26.0}\phantom{.00}} &  & {\tablenum[table-format=6.0]{35.0}} & {\tablenum[table-format=6.0]{56.0}} \\
\multirow{2}{*}{\shortstack[l]{32 leaves\\\relax [100,3600)}} & {\multirow[c]{2}{*}{\tablenum[table-format=3]{43}}} & Gmean & 519.646221 & 567.703026 & 12.132020 & 259501.616701 & 261929.256792 \\
 &  & Wins & {\tablenum[table-format=4.0]{14.0}\phantom{.00}} & {\tablenum[table-format=4.0]{10.0}\phantom{.00}} &  & {\tablenum[table-format=6.0]{15.0}} & {\tablenum[table-format=6.0]{28.0}} \\
\multirow{2}{*}{\shortstack[l]{32 leaves\\\relax [1000,3600)}} & {\multirow[c]{2}{*}{\tablenum[table-format=3]{14}}} & Gmean & 1876.718030 & 1940.795200 & 10.713437 & 613961.860202 & 547304.160732 \\
 &  & Wins & {\tablenum[table-format=4.0]{6.0}\phantom{.00}} & {\tablenum[table-format=4.0]{5.0}\phantom{.00}} &  & {\tablenum[table-format=6.0]{3.0}} & {\tablenum[table-format=6.0]{11.0}} \\
\midrule
\multirow{2}{*}{\shortstack[l]{64 leaves\\\relax [0,3600)}} & {\multirow[c]{2}{*}{\tablenum[table-format=3]{229}}} & Gmean & 7.878246 & 8.349617 & 10.393096 & 6673.861463 & 6817.203112 \\
 &  & Wins & {\tablenum[table-format=4.0]{90.0}\phantom{.00}} & {\tablenum[table-format=4.0]{47.0}\phantom{.00}} &  & {\tablenum[table-format=6.0]{107.0}} & {\tablenum[table-format=6.0]{99.0}} \\
\multirow{2}{*}{\shortstack[l]{64 leaves\\\relax [10,3600)}} & {\multirow[c]{2}{*}{\tablenum[table-format=3]{82}}} & Gmean & 135.620735 & 150.597977 & 18.134402 & 92629.949412 & 96976.105799 \\
 &  & Wins & {\tablenum[table-format=4.0]{35.0}\phantom{.00}} & {\tablenum[table-format=4.0]{18.0}\phantom{.00}} &  & {\tablenum[table-format=6.0]{42.0}} & {\tablenum[table-format=6.0]{40.0}} \\
\multirow{2}{*}{\shortstack[l]{64 leaves\\\relax [100,3600)}} & {\multirow[c]{2}{*}{\tablenum[table-format=3]{41}}} & Gmean & 483.705222 & 558.892444 & 22.095454 & 240939.349494 & 257322.059059 \\
 &  & Wins & {\tablenum[table-format=4.0]{14.0}\phantom{.00}} & {\tablenum[table-format=4.0]{10.0}\phantom{.00}} &  & {\tablenum[table-format=6.0]{20.0}} & {\tablenum[table-format=6.0]{21.0}} \\
\multirow{2}{*}{\shortstack[l]{64 leaves\\\relax [1000,3600)}} & {\multirow[c]{2}{*}{\tablenum[table-format=3]{12}}} & Gmean & 1804.236845 & 2254.478262 & 20.507652 & 546738.715700 & 549838.851544 \\
 &  & Wins & {\tablenum[table-format=4.0]{6.0}\phantom{.00}} & {\tablenum[table-format=4.0]{2.0}\phantom{.00}} &  & {\tablenum[table-format=6.0]{6.0}} & {\tablenum[table-format=6.0]{6.0}} \\
\bottomrule
\end{tabularx}
}

\section{Conclusion \& open problems}
\label{sec:conclusion}

This paper presents a step toward merging cut-generation and branching in integer programming solvers by providing a computationally tractable method for generating cuts from partial branch-and-bound trees.
The framework we introduce is to (1) select a disjunction, (2) choose a (compact) $\mathcal{V}$-polyhedral relaxation for each disjunctive term, 
and (3) selectively generate cuts by judiciously choosing objective directions to optimize over \eqref{PRLP} formed from the point-ray collection.

Our investigation touches on each of these aspects.
The first is the disjunction choice, which in our experiments is the set of leaf nodes of a partial branch-and-bound tree.
We only experiment with the size of the disjunction, but we do not extensively test alternative disjunctions or claim to prescribe the best (or even good) disjunctions for the purposes of cut generation, which is left as an open problem for future work.
As discussed in 
Appendix~G of the extended manuscript~\cite{BalKaz22+_vpc-arxiv},
just one sufficiently strong disjunction chosen this way can lead to cuts that are stronger than those from all possible split and cross disjunctions.
Although using stronger disjunctions does lead to better gap closed, our results indicate that this monotonicity does not hold when embedding the cuts within branch and bound in a solver.
A better understanding of the interaction between the branch-and-bound process and cutting planes remains an open problem meriting future research~\cite{ConLodiTra22_cuts-from-branch-and-bound-tree, KazLeBSan22_abstractbc-mpb}.

The relaxation for each disjunctive term that we use is quite simple, but therein lies its advantage.
Nevertheless, we do show examples highlighting the weakness of our simple relaxations---that only a subset of all the valid disjunctive cuts can be produced---and the computational results do, at times, reflect this weakness.
Thus, there is an opportunity to improve the quality of the generated VPCs by considering tighter relaxations generated from structural information about each instance.

For the objective directions, we provide theoretical support for objective directions to \eqref{PRLP} that yield new and strong VPCs more frequently than previous approaches (reducing the percent of objectives failing to produce a cut from 80\% in early experiments, to around 30\% in the current implementation).

Overall, the cuts we generate are strong, as evidenced by the percent integrality gap they close in our experiments (compared to both GMICs and the default cut setting of \Gurobi{}).
Moreover, %
the integrality gap closed by VPCs increases steadily with the use of stronger disjunctions, 
which may help to avoid the common tailing off of strength experienced by other cut families that require recursive applications to reach strong cuts.
In addition, for some instances, our results show that the extra computational effort pays off in reduced branch-and-bound time,
but on average across the instances tested, employing VPCs causes slower performance.
We conclude that, though VPCs may not yet improve most solvers, the VPC framework has theoretical advantages with the potential for practical impact on cut generation.

{
\paragraph{Acknowledgments.}
\small
This work was supported in part by NSF grant CMMI1560828 and ONR contract N00014-15-12082.
We also greatly appreciate the reviewers' valuable feedback.
}

{
\paragraph{Conflicts of interest.}
\small
The authors have no conflicts of interest to declare.
}

\biblio

\newpage
\appendix

\section{Analysis of effect of disjunction size}
\label{app:results-partial-tree-sizes}

One aspect that is hidden in the results of Section~\ref{sec:computation} is the number of leaf nodes used for the partial branch-and-bound tree to obtain the reported gap closed and branch-and-bound times.
Table~\ref{tab:gap-closed-summary} reports the best result across all the tested sizes of the partial tree, i.e., with number of leaf nodes $\ell \in \{2,4,8,16,32,64\}$.
We next disaggregate the analysis to see how the different size partial trees perform alone.
For data including VPCs, a partial tree size may be specified (either as ``V ($\ell$)'' or ``$\ell$ leaves''), indicating that these results concern only the runs for partial trees with $\ell$ leaf nodes.

{
\sisetup{
    table-alignment-mode = format,
    table-number-alignment = center,
    table-format = 3.0,
}
\begin{table}[b]
  \captionsetup{font=small}
\small
\centering
\caption{Number of leaf nodes yielding the best result for each experiment per instance.}
\label{tab:size}
\begin{tabular}{@{}l*4{S}*2{S}@{}}
\toprule
 & \multicolumn{4}{c}{Gap} 
 & {Time} & {Nodes}
 \\
   \cmidrule(lr){2-5} 
   \cmidrule(l){6-6}
   \cmidrule(l){7-7}
 & {V} & {V+G} & {V+GurF} & {V+GurL}
& {All} & {All} %
 \\
\midrule
No improvement & 200 & 106 & 82 & 98 & 178 & 79 \\
2 leaves & 0 & 10 & 53 & 39 & 27 & 38 \\
4 leaves & 2 & 8 & 65 & 42 & 37 & 37 \\
8 leaves & 5 & 20 & 60 & 37 & 23 & 35 \\
16 leaves & 4 & 18 & 79 & 44 & 25 & 41 \\
32 leaves & 16 & 37 & 72 & 44 & 32 & 40 \\
64 leaves & 105 & 143 & 122 & 96 & 27 & 38 \\
\bottomrule
\end{tabular}
\end{table}
}

Table~\ref{tab:size} gives %
the number of instances for which the best result over the appropriate baseline is achieved when restricted to a particular number of leaves.
We use the same tolerances to count wins as in Section~\ref{sec:computation}.
The first column is the tree size, including the option of no VPCs, %
the second through fifth columns refer to gap closed for the \numgapinst{} instances used in the strength experiments,
while the last columns give solving time and nodes for the \numtimeinst{} instances solved within 3600 seconds on average for either the default Gurobi solver or for Gurobi with VPCs for at least one setting for $\ell$.
We see, as in Table~\ref{tab:depth}, that the tree with the most leaf nodes produces the best percent gap closed quite often, but not always; there are even instances for which $\ell=2$, i.e., a single split disjunction, suffices to achieve the best result.
In contrast, there is no clear winner in terms of branch-and-bound metrics.

This phenomenon of stronger cuts not being directly correlated to better branch and bound performance is not easy to remedy, due to the hard-to-predict effect of cuts on the branch-and-bound process.
Nevertheless, there is one aspect of Table~\ref{tab:bb-summary-depth} that suggests a possible explanation.
Observe that the number of wins with VPCs in terms of number of nodes is typically higher than in terms of time.
For each instance in which the number of nodes decreases but the time increases, 
solving the relaxation at each node of the branch-and-bound tree might be slower with additional cuts added.
It is commonly known that making the coefficient matrix denser will slow down the solution of a linear program.
We next look at how the density of VPCs changes as deeper disjunctions are used.

Table~\ref{tab:density} gives the average cut density statistics across the different partial tree sizes for the set ``All''.
The density of a cut is defined as the number of nonzero cut coefficients divided by the total number of coefficients.
The first row is the number of instances having VPCs for each tree size.
The second row is the number of instances where ``V'' wins on time compared to ``Gur''.
The next three rows give the average of the minimum, maximum, and average densities of VPCs for each instance.
The last two rows give the average of the average cut densities for (1) the instances counted in the second row, i.e., those where VPCs improve time with respect to ``Gur'', and (2) the instances for which ``Gur'' wins over ``V''.

{
\sisetup{
    table-alignment-mode = format,
    table-number-alignment = center,
    table-format = 0.3,
}
\begin{table}
  \captionsetup{font=small}
\small
\centering
\caption{Statistics about the density of generated cuts broken down by partial tree size.}
\label{tab:density}
\begin{adjustbox}{max width=1\textwidth}
\begin{tabular}{@{}l*{6}{S[table-format=0.3,table-auto-round,table-number-alignment=center]}@{}}
\toprule
{} & {V (2)} & {V (4)} & {V (8)} & {V (16)} & {V (32)} & {V (64)} \\
\midrule
\# inst w/VPCs and time < 3600s & {\tablenum[table-format=3.0]{261.0}} & {\tablenum[table-format=3.0]{266.0}} & {\tablenum[table-format=3.0]{259.0}} & {\tablenum[table-format=3.0]{261.0}} & {\tablenum[table-format=3.0]{246.0}} & {\tablenum[table-format=3.0]{229.0}} \\
\# wins by time & {\tablenum[table-format=3.0]{52.0}} & {\tablenum[table-format=3.0]{55.0}} & {\tablenum[table-format=3.0]{46.0}} & {\tablenum[table-format=3.0]{63.0}} & {\tablenum[table-format=3.0]{53.0}} & {\tablenum[table-format=3.0]{47.0}} \\
Avg min cut density & 0.216980 & 0.236145 & 0.274479 & 0.301382 & 0.349120 & 0.391576 \\
Avg max cut density & 0.343876 & 0.382824 & 0.404883 & 0.426546 & 0.484096 & 0.487735 \\
Avg avg cut density & 0.279550 & 0.310963 & 0.343381 & 0.373185 & 0.426838 & 0.453378 \\
Avg avg cut density (win by time) & 0.289946 & 0.298028 & 0.212830 & 0.276679 & 0.360823 & 0.398727 \\
Avg avg cut density (lose by time) & 0.278624 & 0.311245 & 0.346565 & 0.398178 & 0.486850 & 0.501303 \\
\bottomrule
\end{tabular}
\end{adjustbox}
\end{table}
}

The first observation is that the number of instances with VPCs decreases from 261 for the setting with 2 leaf nodes to 229 for the 64 leaf node setting.
The average cut density goes from 0.280 to 0.453 on average, which could help explain the fact that the stronger cuts from stronger disjunctions can yield worse branch and bound times.
In addition, the last two rows suggest that density is correlated to whether VPCs help for an instance.
Namely, for each column, the average cut density is always lower for those instances that win by time.
Future experiments may benefit from doing cut filtering by density, or possibly reducing the density of generated cuts \latin{a posteriori} while sacrificing strength. %

\section{Analysis of objective function choices}
\label{app:results-objectives}
Next, via Table~\ref{tab:objectives}, we discuss our objective function choices,
including statistics on the frequency of failures as a function of disjunction size.
A caveat is that our analysis of objectives functions is somewhat limited by our relatively conservative strategy in selecting objectives to use, in order to limit time spent on cut generation and the types of failures discussed in Section~\ref{sec:obj-theory}.
The objectives we choose utilize the results of \citet{KazNadBalMar20}, suggesting that a successful class of objectives is the set of points and rays of the point-ray collection, but, motivated by Proposition~\ref{prop:disjLB}, we only do this for cuts tight at $\ref{p*}$ (the loop starting at step~\ref{step:VPC1:DB} of Algorithm~\ref{alg:VPC1}).
It is natural to consider cuts that only lie on deeper points, but this, as well as cuts that are satisfied by $\lpopt$, remains a topic for future research.

{
\sisetup{
    table-alignment-mode = format,
    table-number-alignment = center,
    table-format = 2.2,
}
\begin{table}[b]
  \captionsetup{font=small}
\small
\centering
\caption{Statistics about objectives leading to failures, by partial tree size.}
\label{tab:objectives}
\begin{tabular}{@{}l*{6}{S[table-format=2.2,table-auto-round,table-number-alignment=center]}@{}}\toprule
{} & {V (2)} & {V (4)} & {V (8)} & {V (16)} & {V (32)} & {V (64)} \\
\midrule
\# inst w/obj & {\tablenum[table-format=3.0]{311.0}} & {\tablenum[table-format=3.0]{318.0}} & {\tablenum[table-format=3.0]{304.0}} & {\tablenum[table-format=3.0]{306.0}} & {\tablenum[table-format=3.0]{289.0}} & {\tablenum[table-format=3.0]{271.0}} \\
\# inst w/succ obj & {\tablenum[table-format=3.0]{308.0}} & {\tablenum[table-format=3.0]{315.0}} & {\tablenum[table-format=3.0]{300.0}} & {\tablenum[table-format=3.0]{302.0}} & {\tablenum[table-format=3.0]{285.0}} & {\tablenum[table-format=3.0]{263.0}} \\
\# inst no obj & {\tablenum[table-format=3.0]{21.0}} & {\tablenum[table-format=3.0]{14.0}} & {\tablenum[table-format=3.0]{28.0}} & {\tablenum[table-format=3.0]{26.0}} & {\tablenum[table-format=3.0]{43.0}} & {\tablenum[table-format=3.0]{61.0}} \\
\# inst all obj fail & {\tablenum[table-format=3.0]{3.0}} & {\tablenum[table-format=3.0]{3.0}} & {\tablenum[table-format=3.0]{4.0}} & {\tablenum[table-format=3.0]{4.0}} & {\tablenum[table-format=3.0]{4.0}} & {\tablenum[table-format=3.0]{8.0}} \\
\# inst all obj succ & {\tablenum[table-format=3.0]{30.0}} & {\tablenum[table-format=3.0]{41.0}} & {\tablenum[table-format=3.0]{31.0}} & {\tablenum[table-format=3.0]{34.0}} & {\tablenum[table-format=3.0]{27.0}} & {\tablenum[table-format=3.0]{24.0}} \\
\midrule
\% obj fails & 28.572839 & 25.279546 & 26.495366 & 30.774286 & 31.833289 & 34.084307 \\
\midrule
\% fails dup & 50.672298 & 39.337152 & 41.105995 & 48.784992 & 51.021596 & 56.847203 \\
\% fails unbdd & 38.737356 & 47.589754 & 45.409934 & 35.735784 & 32.210717 & 27.495963 \\
\% fails tilim & 1.379326 & 3.673926 & 3.607452 & 5.130045 & 6.267595 & 5.910538 \\
\% fails dyn & 9.075878 & 8.736189 & 9.609179 & 9.418319 & 9.312240 & 9.377769 \\
\midrule
\% fails all ones & 26.746944 & 30.729987 & 25.846494 & 20.687139 & 17.435861 & 14.174297 \\
\% fails post-GMIC obj & 14.594832 & 20.446940 & 22.883502 & 21.245874 & 20.508121 & 17.376528 \\
\% fails DB & 58.658225 & 48.823073 & 51.270005 & 58.066987 & 62.056018 & 68.449175 \\
\midrule
\# obj / cut & 2.572361 & 2.307938 & 2.380017 & 2.761221 & 2.969413 & 3.444549 \\
(s) / obj & 0.290980 & 4.418293 & 0.771466 & 0.920539 & 1.931006 & 3.016811 \\
(s) / cut & 0.360224 & 1.678482 & 6.324945 & 4.976613 & 11.674281 & 22.383200 \\
\bottomrule
\end{tabular}
\end{table}
}

Table~\ref{tab:objectives} summarizes objective failures using all \numgapinst{} instances from the gap closed experiments.
The columns of the table are the same as Table~\ref{tab:density}.
The rows are divided into several blocks.
The first block gives statistics on the number of instances for each column for which: (1) objectives were tried, (2) VPCs were generated, (3) objectives were not tried, (4) none of the objectives yielded VPCs, and (5) all of the objectives led to distinct VPCs.
The next block gives the average percent of the objectives that were failures.
The subsequent block of rows looks at the cause of these failures, which fall into one of four categories:
\begin{itemize}[itemsep=0pt]
  \item ``Dup'': the optimal solution to the PRLP is an exact duplicate of an existing cut
  \item ``Unbdd'': \eqref{PRLP} does not have a finite solution for that objective
  \item ``Tilim'': the time limit for \eqref{PRLP} is attained for that objective
  \item ``Dyn'': the dynamism of the new cut is too high
\end{itemize}
The following block of rows looks at the percent of failures for each class of objectives:\ ``all ones'' ($w = e$), ``post-GMIC'' (step~\ref{step:VPC1:postGMIC} of Algorithm~\ref{alg:VPC1}), and ``DB'' (step~\ref{step:VPC1:DB} of Algorithm~\ref{alg:VPC1}).
The last block of rows looks at the average number of objectives required to generate each distinct VPC, as well as the average number of seconds taken per objective and per cut.

This table shows that failures become more frequent when using disjunctions with more terms, with average failure rate increasing from 29\% to 34\%.
The primary reason for this is that more objectives lead to cuts that were previously generated, leading to, on average, up to 57\% of the failures.
The cause is that there are more unsuccessful objectives being tried for the ``DB'' class of objectives.
The last set of rows of the table also show that cut generation can be extremely costly for stronger disjunctions,
which could be mitigated by reducing the failure rate.
Table~\ref{app:tab:obj-and-time-best}
provides objective statistics just for the best run per instance (across all partial tree sizes).

Lastly, we look at which classes of objective functions are more likely to lead to active cuts (after the addition of all cuts),
as a different measure of the effect of our cuts.
Table~\ref{tab:activity} gives averages for which cuts are active at the optimal solution after adding both GMICs and VPCs to \ref{P}, for GMICs and VPCs, as well as individually based on the objective producing each VPC.
The first row is the percent of GMICs that are active.
The second row is the percent of VPCs that are active, averaged across those instances per each column for which VPCs were generated.
The next rows come in pairs and give the average percent of cuts that come from the four subclasses of VPCs within our procedure, 
as well as the average percent of these cuts that are active (across instances for which there exist cuts from that class).
The first class concerns a set of (rarely encountered) cuts that is added in our procedure, which we call ``one-sided''.
In the process of generating VPCs, while selecting variables for strong branching, 
we occasionally detect that one of the two possible branches is infeasible.
In this case, for a variable $x_k$, $k \in \intvars$,
we generate the ``one-sided cut''
	$x_k \le \floor{\lpopt_k}$ or $x_k \ge \ceil{\lpopt_k}$.
These cuts are generated for 5 of the \numgapinst{} instances, with a total of only 6 cuts.

\begin{table}[h]
  \captionsetup{font=small}
\small
\centering
\caption{Statistics about when generated cuts are active, broken down by partial tree size.}
\label{tab:activity}
\begin{tabular}{@{}l*{6}{S[table-format=3.2,table-auto-round,table-number-alignment=center]}@{}}
\toprule
{} & {\makecell[c]{{V+G}\\ {(2)}}} & {\makecell[c]{{V+G}\\ {(4)}}} & {\makecell[c]{{V+G}\\ {(8)}}} & {\makecell[c]{{V+G}\\ {(16)}}} & {\makecell[c]{{V+G}\\ {(32)}}} & {\makecell[c]{{V+G}\\ {(64)}}} \\
\midrule
\% active GMIC & 44.088577 & 43.515863 & 42.170715 & 41.598561 & 40.879039 & 40.675613 \\
\% active VPC & 30.196454 & 30.591121 & 31.897898 & 35.499314 & 34.478032 & 33.401584 \\
\% cuts one-sided & 0.767677 & 0.722595 & 0.741979 & 1.107456 & 0.815851 & 0.792411 \\
\% active one-sided & 100.000000 & 100.000000 & 100.000000 & 100.000000 & 100.000000 & 100.000000 \\
\% cuts all ones & 11.822462 & 6.723898 & 7.806594 & 9.095018 & 8.404258 & 8.995381 \\
\% active all ones & 91.447368 & 84.782609 & 81.818182 & 77.018634 & 79.629630 & 79.268293 \\
\% cuts post-GMIC opt & 2.615979 & 3.397466 & 2.420045 & 2.186830 & 1.703320 & 2.739358 \\
\% active post-GMIC opt & 85.897436 & 71.428571 & 67.924528 & 62.745098 & 63.829787 & 61.904762 \\
\% cuts DB & 84.793883 & 89.156040 & 89.031382 & 87.610696 & 89.076571 & 87.472850 \\
\% active DB & 63.745545 & 58.323106 & 56.359239 & 51.850133 & 49.306592 & 43.000601 \\
\bottomrule
\end{tabular}
\end{table}

Table~\ref{tab:activity} shows that the proportion of active VPCs somewhat increases while the percent of active GMICs decreases as VPCs from stronger disjunctions are used.
Aside from the one-sided cuts, which are always active in our results, the objectives ``all ones'' and ``post-GMIC opt'' lead to cuts that are frequently active, though these objectives yield at most two cuts in total per instance.
Though the ``DB'' class of objectives leads to a smaller percentage of active cuts, it is the source for the majority of the cuts that we generate.

\clearpage
{
\setlength{\tabcolsep}{6pt} %
\renewcommand{\arraystretch}{0.98} %
\setlength\LTleft{-20.27893pt}%
\centering
% [inline block 0: 1 envs, 32422 chars -> data_tex | \begin{tabularx}{\textwidth}{@{}         l...]

} %

\section{Example of invalid cuts from a point-ray collection}
\label{app:counterex-vpc-neighbors}

This example shows that the using as the point-ray collection the optimal points $p^t$ on each term $t \in \mathcal{T}$ along with the neighbors of $p^t$ may lead to the generation of invalid cuts from the associated \eqref{PRLP}.
\begin{align*}
    \max_{x_1,x_2,x_3}\quad &\phantom{-}x_3\\
    &\begin{aligned}
      -&x_3 \le -{1}/{2} \\
      -&({7}/{4})x_1 + 5 x_2 - 2 x_3 \le 1 \\
      -&x_1 - 5 x_2 + 2 x_3 \le -1 \\
      -&x_1 - ({20}/{3}) x_2 + ({7}/{3}) x_3 \le - {3}/{2} \\
      &x_1 - x_2 + ({3}/{2}) x_3 \le {3}/{2} \\
      &2x_1 - x_2 + 3 x_3 \le {7}/{2} \\
      -&x_1 + 4 x_2 + 2x_3 \le {7}/{2} \\
      -&x_1 + 4 x_2 \le 2 \\
      &x_1, x_2, x_3 \in [0,1] \\
      &x_1 \phantom{{}\le{}} \mbox{integer}
    \end{aligned}
  \end{align*}

Let $P$ denote the feasible region of the linear relaxation of the above integer program.
Figure~\ref{app:fig:counterex-vpc-neighbors} shows the feasible region of $P$.
Point~$q^1$ denotes the optimal solution to the linear programming relaxation.
The vertices of $P$ are:
  \begin{align*}
    q^1 &= \{1/2,1/2,1\} \\
    q^2 &= \{1, 3/4, 3/4\} \\
    q^3 &= \{1, 3/4, 1/2\} \\
    q^4 &= \{1, 1/2, 2/3\} \\
    q^5 &= \{1, 1/4, 1/2\} \\
    q^6 &= \{0, 1/2, 3/4\} \\
    q^7 &= \{0, 2/5, 1/2\}.
  \end{align*}

We use as the valid disjunction the elementary split on $x_1$.
If we solve $\max\{x_3 \suchthat x \in P,\ x_1 = 0\}$, the maximum is achieved by $q^6$.
Solving $\max\{x_3 \suchthat x \in P,\ x_1 = 1\}$, the maximum is achieved by $q^2$.
Consider using $q^2$ and $q^6$ and their neighbors as the collection of points given to \eqref{PRLP}.
Thus $\pointset = \{q^2, q^3, q^4,  q^6, q^7\}$ and $\rayset = \emptyset$.

One cut that can be obtained from this set of points is $\bar{\alpha}^\T x \ge 1$ where
$\bar{\alpha} = (-1/6, 5, -2)$,
which goes through $q^6$, $q^7$, and $q^4$, leaving $q^2$ and $q^3$ on the feasible side, 
and cutting off not only $q^1$, but also $q^5$.
Indeed, $\bar{\alpha}^\T q^1 = -1/12 + 5/2 - 2 = 5/12 < 1$, while $\bar{\alpha}^\T q^4 =
\bar{\alpha}^\T q^7 = \bar{\alpha}^\T q^6 = 1$, and $\bar{\alpha}^{\T} q^2 = 25/12 > 1$ and $\bar{\alpha}^\T
q^3 = 31/12 > 1$.
However, as seen in Figure~\ref{app:fig:counterex-vpc-neighbors}, 
$\bar{\alpha}^\T q^5 = 1/12 < 1$, so that point of $P \cap \{x \suchthat x_1 = 1\}$ is cut off,
making the cut invalid.

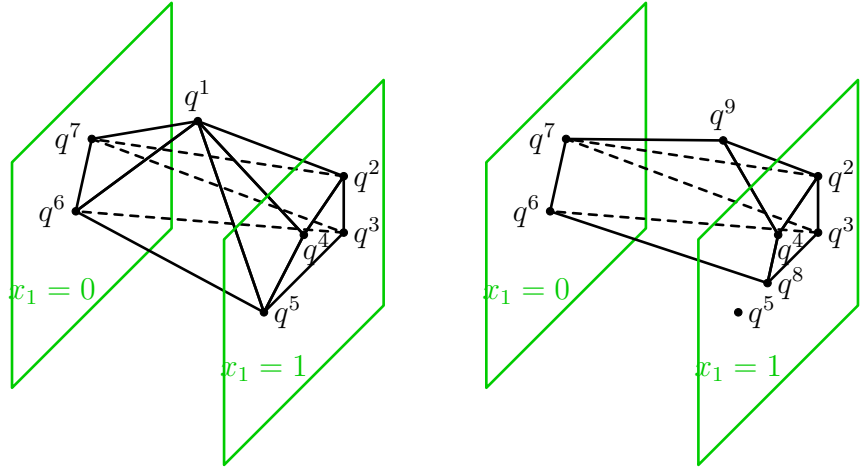
\begin{figure} %
\centering
  \begin{subfigure}{.45\textwidth}
   \centering
    \begin{tikzpicture}[line join=round,line cap=round,x={(\xX cm, \xY cm)},y={(\yX cm, \yY cm)},z={(\zX cm,\zY cm)},>=stealth,scale=3] 
      \pgfmathsetmacro{\scaleRays}{1}
      \tikzstyle{ray_style} = [->,>= stealth,shorten >=2pt,line width=1.5pt]
      \tikzstyle{point_style} = [circle, inner sep=1pt, fill=black]
     
      \coordinate (p1) at (1/2,1/2,1);
      \coordinate (p2) at (1,3/4,3/4);
      \coordinate (p3) at (1,3/4,1/2);
      \coordinate (p4) at (1,1/2,2/3);
      \coordinate (p5) at (1,1/4,1/2);
      \coordinate (p6) at (0,2/5,1/2);
      \coordinate (p7) at (0,1/2,3/4);
      
      \coordinate (p8) at (1,13/30,1/2);
      \coordinate (p9) at (27/40,47/80,73/80);
      
      \draw [fill opacity=0.5,fill=none, color=green!80!black, line width=1pt] (0,0,0) -- (0,1,0) -- (0,1,1) -- (0,0,1) -- cycle;
      
      \draw [line width=1pt] (p1) -- (p2) -- (p4) -- cycle;
      \draw [line width=1pt] (p1) -- (p4) -- (p5) -- cycle;
      \draw [line width=1pt] (p1) -- (p5) -- (p6) -- cycle;
      \draw [line width=1pt] (p1) -- (p6) -- (p7) -- cycle;
      \draw [line width=1pt] (p2) -- (p3) -- (p5) -- (p4) -- cycle;

      \draw [line width=1pt,dashed] (p2) -- (p7) -- (p3) -- (p6);
      
      \node [draw, point_style, fill=black, label={[label distance=-2pt]90: $q^1$}] at (p1) {};
      \node [draw, point_style, fill=black, label={[label distance=-2pt]0: $q^2$}] at (p2) {};
      \node [draw, point_style, fill=black, label={[label distance=-2pt]0: $q^3$}] at (p3) {};
      \node [draw, point_style, fill=black, label={[label distance=-8pt]-45: $q^4$}] at (p4) {};
      \node [draw, point_style, fill=black, label={[label distance=-2pt]0: $q^5$}] at (p5) {};
      \node [draw, point_style, fill=black, label={[label distance=-2pt]180: $q^6$}] at (p6) {};
      \node [draw, point_style, fill=black, label={[label distance=-2pt]180: $q^7$}] at (p7) {};
      \node [circle, inner sep=1.5pt, fill=none] at (0,.25,.25) {\color{green!80!black} $x_1 = 0$};
      \node [circle, inner sep=1.5pt, fill=none] at (1,.25,0.25) {\color{green!80!black} $x_1 = 1$};
      \draw [fill opacity=0.5,fill=none, color=green!80!black, line width=1pt] (1,0,0) -- (1,1,0) -- (1,1,1) -- (1,0,1) -- cycle;
     \end{tikzpicture}
    \end{subfigure}
    \begin{subfigure}{.45\textwidth}
      \centering
    \begin{tikzpicture}[line join=round,line cap=round,x={(\xX cm, \xY cm)},y={(\yX cm, \yY cm)},z={(\zX cm,\zY cm)},>=stealth,scale=3] 
      \pgfmathsetmacro{\scaleRays}{1}
      \tikzstyle{ray_style} = [->,>= stealth,shorten >=2pt,line width=1.5pt]
      \tikzstyle{point_style} = [circle, inner sep=1pt, fill=black]
     
      \coordinate (p1) at (1/2,1/2,1);
      \coordinate (p2) at (1,3/4,3/4);
      \coordinate (p3) at (1,3/4,1/2);
      \coordinate (p4) at (1,1/2,2/3);
      \coordinate (p5) at (1,1/4,1/2);
      \coordinate (p6) at (0,2/5,1/2);
      \coordinate (p7) at (0,1/2,3/4);
      
      \coordinate (p8) at (1,13/30,1/2);
      \coordinate (p9) at (27/40,47/80,73/80);
      
      \draw [fill opacity=0.5,fill=none, color=green!80!black, line width=1pt] (0,0,0) -- (0,1,0) -- (0,1,1) -- (0,0,1) -- cycle;
      
      \draw [line width=1pt] (p9) -- (p2) -- (p4) -- cycle;
      \draw [line width=1pt] (p9) -- (p4) -- (p8) -- (p6) -- (p7) -- cycle;
      \draw [line width=1pt] (p2) -- (p3) -- (p8) -- (p4) -- cycle;

      \draw [line width=1pt,dashed] (p2) -- (p7) -- (p3) -- (p6);
      
      \node [draw, point_style, fill=black, label={[label distance=-2pt]0: $q^2$}] at (p2) {};
      \node [draw, point_style, fill=black, label={[label distance=-2pt]0: $q^3$}] at (p3) {};
      \node [draw, point_style, fill=black, label={[label distance=-8pt]-45: $q^4$}] at (p4) {};
      \node [draw, point_style, fill=black, label={[label distance=-2pt]0: $q^5$}] at (p5) {};
      \node [draw, point_style, fill=black, label={[label distance=-2pt]180: $q^6$}] at (p6) {};
      \node [draw, point_style, fill=black, label={[label distance=-2pt]180: $q^7$}] at (p7) {};
      \node [draw, point_style, fill=black, label={[label distance=-2pt]0: $q^8$}] at (p8) {};
      \node [draw, point_style, fill=black, label={[label distance=-2pt]90: $q^9$}] at (p9) {};
      \node [circle, inner sep=1.5pt, fill=none] at (0,.25,.25) {\color{green!80!black} $x_1 = 0$};
      \node [circle, inner sep=1.5pt, fill=none] at (1,.25,0.25) {\color{green!80!black} $x_1 = 1$};
      \draw [fill opacity=0.5,fill=none, color=green!80!black, line width=1pt] (1,0,0) -- (1,1,0) -- (1,1,1) -- (1,0,1) -- cycle;
    \end{tikzpicture}
    \end{subfigure}
\caption{The LP polytope \ref{P} for the counter-example showing an optimal point on each disjunctive term and its neighbors as the point-ray collection may lead to invalid cuts.}\label{app:fig:counterex-vpc-neighbors}
\end{figure}

There is a relatively simple resolution for the above counterexample.
If we require the generated cuts to be tight
at the optimal solutions on each facet, $q^6$ and $q^2$, then all generated cuts
will be valid for $\conv(P_D)$.
We state this in Theorem~\ref{app:thm:proper-neighbors}.

\begin{theorem}
\label{app:thm:proper-neighbors}
  For each $t \in \mathcal{T}$, set $\pointset^t$ to be an optimal solution $p^t$ to $\argmin_x \{c^\T x \suchthat x \in \Pt\}$ as well as one point on each of the edges emanating from $p^t$ within $\Pt$.
  Let $\pointset \defeq \cup_{t \in \mathcal{T}} \pointset^t$ and $\rayset \defeq \emptyset$.
  Any feasible solution to \eqref{PRLP} formulated from these points and amended with the condition $\alpha^\T p^t = \beta$ for all $t \in \mathcal{T}$ yields a valid cut for $\PI$.
\end{theorem}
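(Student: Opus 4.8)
The plan is to show directly that any feasible $(\alpha,\beta)$ to the amended \eqref{PRLP} satisfies $\alpha^\T x \ge \beta$ for every $x \in P^t$ and every $t \in \mathcal{T}$; validity for $P_I$ then follows immediately, since $P_I \subseteq P_D = \cl\conv(\cup_{t \in \mathcal{T}} P^t)$ and the half-space $\{x : \alpha^\T x \ge \beta\}$ is closed and convex, so it contains the closed convex hull of $\cup_{t} P^t$.

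First I would translate the point-on-edge constraints into sign conditions on edge directions. Fixing $t$, let $r^{t1},\dots,r^{tk_t}$ denote the directions of the edges of $P^t$ emanating from $p^t$, so the chosen point on the $j$-th edge is $p^t + \lambda_{tj} r^{tj}$ for some $\lambda_{tj} > 0$. The \eqref{PRLP} constraint for this point reads $\alpha^\T(p^t + \lambda_{tj} r^{tj}) \ge \beta$, and combining it with the amended equality $\alpha^\T p^t = \beta$ gives $\lambda_{tj}\,\alpha^\T r^{tj} \ge 0$, hence $\alpha^\T r^{tj} \ge 0$ for every edge direction. Note this reasoning is insensitive to whether the edge is bounded or is an unbounded ray of $P^t$, since a point $p^t + \lambda_{tj} r^{tj}$ with $\lambda_{tj} > 0$ exists in either case.

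The key geometric step is that $P^t$ is contained in the tangent cone at its vertex $p^t$. Dropping from the description of $P^t$ every inequality not active at $p^t$ yields the relaxation $p^t + K^t$, where $K^t$ is the cone of feasible directions at $p^t$, and clearly $P^t \subseteq p^t + K^t$. Because $P$ (and hence $P^t$) is pointed, $K^t$ is a pointed polyhedral cone whose extreme rays are exactly the edge directions $r^{tj}$, so $K^t = \cone(\{r^{tj}\}_{j})$. Thus any $x \in P^t$ can be written as $x = p^t + \sum_j \mu_j r^{tj}$ with all $\mu_j \ge 0$, whence $\alpha^\T x = \beta + \sum_j \mu_j\,\alpha^\T r^{tj} \ge \beta$. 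Since this holds for every $t$, the cut is valid on $\cup_{t} P^t$ and therefore on $P_D \supseteq P_I$, completing the argument.

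The main obstacle is the second geometric step: justifying $P^t \subseteq p^t + \cone(\{r^{tj}\}_{j})$ cleanly, especially when $p^t$ is primal-degenerate, where one must confirm that the extreme rays of the tangent cone still coincide with the edge directions of $P^t$ at $p^t$. I would dispatch this by appealing to the standard polyhedral fact that, for a pointed polyhedron, the one-dimensional faces incident to a vertex are in direction-correspondence with the extreme rays of the tangent cone, independent of degeneracy. I would also remark that the equality constraints $\alpha^\T p^t = \beta$ are precisely what rescue validity relative to the earlier counterexample: without them, the point-on-edge inequalities only bound $\alpha^\T r^{tj} \ge (\beta - \alpha^\T p^t)/\lambda_{tj}$ from below, a quantity that can be negative and thereby permit the cut to slice through $P^t$.
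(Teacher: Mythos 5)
Your proof is correct and follows essentially the same route as the paper's: both combine the equalities $\alpha^\T p^t = \beta$ with the edge-point inequalities to show the cut is valid for the cone with apex $p^t$ spanned by the edge directions, observe that this cone contains $P^t$, and conclude validity for $P_I$ through the disjunctive hull. The only difference is that you spell out the polyhedral fact the paper compresses into ``by convexity''---namely that the tangent cone at a vertex is pointed and generated by the edge directions, even under primal degeneracy---which makes your treatment of that step more rigorous but does not change the argument.
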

\begin{proof}
  Let $(\alpha,\beta)$ be a feasible solution to \eqref{PRLP} such that $\alpha^\T p^t = \beta$ for all $t \in \mathcal{T}$.
  Since $\alpha^\T p \ge \beta$ for all $p \in \pointset^t$, it holds that $\alpha^\T (p - p^t) \ge 0$.
  This means that, for each $t \in \mathcal{T}$, the generated cut is valid for the cone with apex at $p^t$ and rays going through each of the points $p \in \pointset^t$.
  By convexity, this cone is a relaxation of $\Pt$.
  It follows, by Corollary~\ref{cor:relaxedPRcollection}, that the cut is valid for $\PI$.
\end{proof}

\section{Sample partial branch-and-bound tree}
\label{app:sec:sample-tree}

The computational experiments in the paper use a partial branch-and-bound tree as the source of the disjunction for cut generation.
The partial trees are produced from the branching strategy described in Section~\ref{sec:computational-setup}.
In particular, there may exist nodes of the tree that are pruned, and the tree may be very asymmetric.
We illustrate this with one sample tree, shown in Figure~\ref{app:fig:sample-tree}, constructed from the instance \texttt{bm23} and terminated after finding 64 leaf nodes.
This tree includes two pruned nodes (the leftmost leaf node, from branching on $x_{15}$, and an adjacent leaf on the same level, from branching on $x_{8}$).

\begin{figure}[ht]
  \centering
  \includegraphics[width=\textwidth]{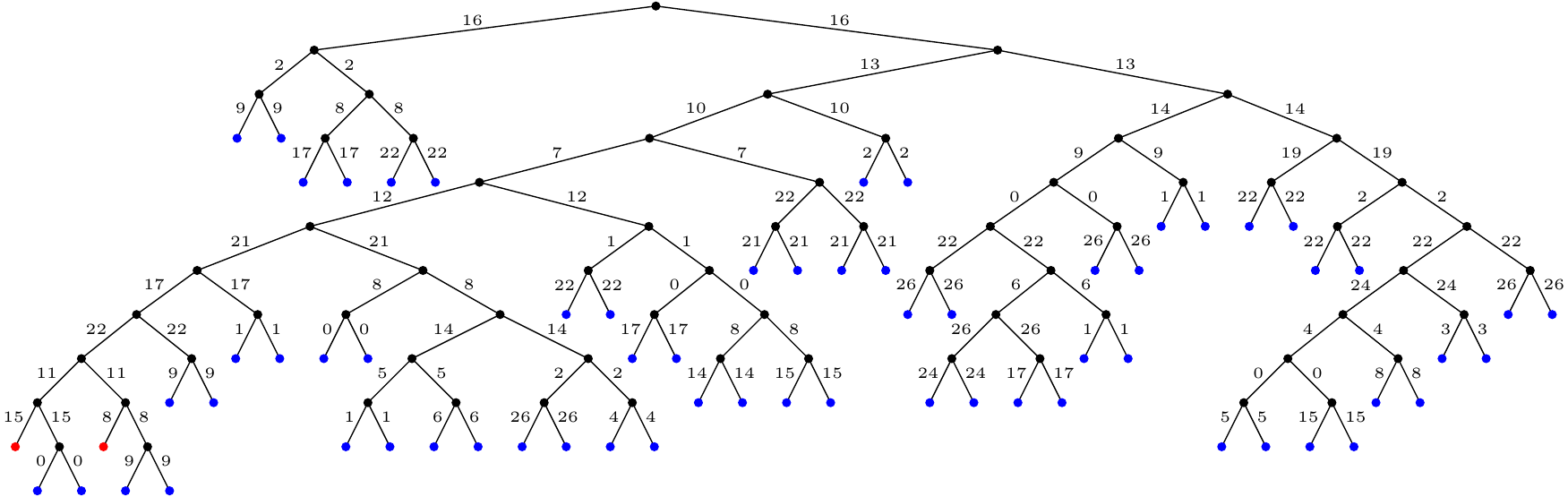}
  \caption{Partial branch-and-bound tree with 64 leaf nodes for instance \texttt{bm23}.}
  \label{app:fig:sample-tree}
\end{figure}

\section{Computational setup: additional details}
\label{app:additional-setup}

This section expands on Section~\ref{sec:computational-setup} with additional details on the tolerances and settings used in our VPC implementation.

\subsection{Evaluation}

We evaluate cuts from two different perspectives: strength and effect on branch and bound.
The strength of the cuts is assessed by the \emph{percent integrality (root) gap closed} by one round of VPCs.
Let $x_I$ denote an optimal solution to \eqref{IP}, and let ${x}'$ be an optimal solution to \eqref{LP} after a set of cuts have been added.
We measure the quantity
  \[
    \text{\% integrality gap closed} \defeq 100 \times \frac{c^\T {x}' - c^\T \lpopt}{c^\T x_I - c^\T \lpopt}.
  \]
As a baseline, we also report the percent gap closed by adding one round of GMICs, as well as the percent gap closed by using both VPCs and GMICs together.
In addition, we report, both with and without the use of VPCs, the root gap closed by \Gurobi{} after one cut pass and after the last round of cuts is added at the root.
The effect on branch and bound is measured by the time \Gurobi{} takes to solve the problem with VPCs added as user cuts;
this is compared to the time taken without VPCs.

\looseness=-1
Note that there are two sources of cut strength, one from the disjunction from which the cut is produced, and one from \emph{modularization}
applied to variables other than those involved in the disjunction~\cite{BalJer80}.
In contrast to GMICs, VPCs do not use this second approach, as applying the technique to VPCs is more complex and requires the use of additional information about the cut when it is derived from a disjunction that is not simple.

\subsection{Generating a partial branch-and-bound tree}
The partial branch-and-bound tree is generated by the node, variable, and branch selection rules that follow, which are the defaults for \texttt{Cbc}.
Node selection is roughly by the \emph{best-first} or \emph{best-bound} rule, in which the next node to explore will be the one with the minimum objective value (though the number of fractional variables at each node is also considered, it is to a much lesser extent).
Variable selection utilizes the outcome of strong branching on up to five fractional variables at each node.
Between the two possible children of the node when branching on the variable that is selected, the direction is chosen by a similar rule to the node selection criteria, i.e., typically in the direction of the child with a lower optimal value.

\subsection{Instance preprocessing}
Every instance is first preprocessed by \Gurobi{}'s presolve.
This procedure is used in order to improve the fairness of the testing environment.
It allows VPCs to be generated from the same version of the instance that would be used internally by the branch-and-bound solver.
This is also a reason for turning presolve off during the subsequent branch and bound tests, as one round of presolve has already been applied.
At the same time, for some instances, preprocessing closes a significant portion of the integrality gap, which could make the process of finding strong cuts more difficult.
For reproducibility, one must be aware that not only might \Gurobi{}'s presolve algorithms change with a new version (release of the software), but also they depend on the random seed given to the solver.
We do not experiment with this latter variability (we presolve with the random seed 628 only).

\subsection{ Setting up \texorpdfstring{\eqref{PRLP}}{PRLP} }
When constructing the associated \eqref{PRLP} (in the nonbasic space defined by the cobasis at $\lpopt$), we remove all duplicate rows.
In addition, any rows that are actually bounds on the $\alpha$ variables are removed as explicit constraints and kept as bounds instead.
Henceforth, we assume that $(\pointset,\rayset)$ has no duplicates.
We proceed with generating cuts from a given \eqref{PRLP} if it is feasible and solves to optimality within a minute when using no objective, i.e., just the feasibility problem.

\subsection{Normalization}
\label{sec:setup-normalization}
As mentioned, we formulate the PRLP in the nonbasic space with respect to $\lpopt$ (in which $\lpopt$ is represented as the origin), and we normalize \eqref{PRLP} to have $\beta = 1$, which restricts the set of obtainable inequalities to those that cut $\lpopt$.
In practice, we will actually use some scaled positive constant determined by the input, as $\beta = 1$ could lead to numerical issues, due to how it causes the cut coefficients scale.
To illustrate this, suppose $c^\T p^t \ge 10^{8}$ for all $t \in \mathcal{T}$.
Then, if $\beta = 1$, $\alpha = c / 10^{8}$ is a feasible solution to \eqref{PRLP}.
However, coefficients less than $10^{-7}$ are often regarded as zero by solvers, so we may end up generating cuts incorrectly with improper scaling.

\subsection{Cut processing and objective failures}
\label{sec:cut-processing}
Not every objective function we try for \eqref{PRLP} leads to a new cut.
As we discussed in Section~\ref{sec:obj-theory}, \eqref{PRLP} can be unbounded or lead to a duplicate cut.
Other failures are imposed by conditions that we set.
If the time to solve \eqref{PRLP} for an objective is greater than 5 seconds, we abandon the objective.
If we do obtain a solution, since \eqref{PRLP} is formulated in the nonbasic space, we first convert the cut to the structural space, yielding a cut $\gamma^\T x \ge \gamma_0$.
Next, we remove small coefficients: for $j \in [n]$, if $\abs{\gamma_j} < \eps$, we ignore the coefficient, and if $\eps \le \abs{\gamma_j} < \eps_{\text{coeff}}$, we replace $x_j$ by either its lower or upper bound and adjust $\gamma_0$.
In our experiments, $\eps = 10^{-7}$, and $\eps_{\text{coeff}} = 10^{-5}$.
Assume that $\gamma^\T x \ge \gamma_0$ has been processed in this way.
The cut is rejected if it is a duplicate of or dominated by a previously generated VPC.
It will also be rejected if its dynamism ($\max_{j \in [n]} \abs{\gamma_j} / \min_{j \in [n]} \{\abs{\gamma_j} \suchthat \gamma_j \ne 0\}$) is higher than $10^8$.
In addition, if there exists some previously generated VPC $\alpha^\T x \ge \beta$ that is nearly parallel to $\gamma^\T x \ge \gamma_0$, i.e., if $\alpha \cdot \gamma / (\norm{\alpha} \cdot \norm{\gamma}) < \epsortho$ ($\epsortho = 10^{-3}$ in our setup), then we keep only one of these two cuts (the one that separates $\lpopt$ by a greater Euclidean distance, or if these are equal, the sparser cut).

We solve \eqref{PRLP} until we exhaust all objectives or reach one of these stopping criteria:
  (1) Numerical difficulties are encountered while solving \eqref{PRLP}. %
  (2) The time limit for cut generation is reached.
  (3) The cut limit is reached.
  (4) The failure limit is reached.

An example of numerical difficulties we have encountered is when \eqref{PRLP} solves to optimality for one objective but is deemed primal infeasible for another.
The time limit for cut generation is 900 seconds (the time to set up the partial tree and build \eqref{PRLP} is not counted against this).
The cut limit is equal to the number $k$ of fractional variables at the LP optimal solution.

The failure limit we use comes from some nontrivial experimentation.
It varies based on several parameters: there are different maximum failure rates depending on whether ``few'' or ``many'' cuts have been generated, and whether ``many'' objective functions have been attempted.
Let $\phi_{\text{few\_cuts}} \defas 0.95$, $\phi_{\text{many\_cuts}} \defas 0.90$, $\phi_{\text{many\_obj}} \defas 0.80$.
We define ``few'' cuts as $n_{\text{few\_cuts}} \defas 1$, ``many'' cuts as $n_{\text{many\_cuts}} \defas \ceil{k/4}$, and ``many'' objectives as 
  \[
    n_{\text{many\_obj}} 
      \defas \max\{ \ceil{n_{\text{few\_cuts}} / (1 - \phi_{\text{few\_cuts}})}, \ceil{n_{\text{many\_cuts}} / (1-\phi_{\text{many\_cuts}})} \}.
  \]
Hence, the default for $n_{\text{many\_obj}}$ is $\max \{ 20, 10\ceil{k/4}\}$.
After each cut, we test whether the current failure ratio (number of unsuccessful objectives as a proportion of the total number of objectives attempted) is greater than the appropriate threshold ($\phi_{\text{few\_cuts}}$, $\phi_{\text{many\_cuts}}$, or $\phi_{\text{many\_obj}}$);
if it is, then we return that we have reached the failure limit.
We also say we reached the failure limit if the first $\ceil{n_{\text{few\_cuts}} / (1 - \phi_{\text{few\_cuts}})}$ objectives all lead to failures;
this is often an indicator of numerical issues with the instance.
As more cuts are generated and more objectives are tried, the acceptable failure rate decreases in this setup, as there is likely to be diminishing marginal benefit for additional cuts and we wish to avoid spending excessive amounts of time attempting to generate cuts unsuccessfully.

\section{Instance selection}
\label{sec:instance-selection}
Instances were selected by the following criteria:
\begin{enumerate}[label={Criterion~\arabic*.},ref={{\arabic*}},leftmargin=*,]
	\item \label{selection-criterion:ip-opt-known}
		The IP optimal value is known and is not equal to the LP optimal value.
	\item \label{selection-criterion:max-instance-size}
		There are at most 5,000 rows and 5,000 columns in the presolved instance.
	\item \label{selection-criterion:partial-tree-does-not-find-opt}
		No partial branch-and-bound tree used finds an IP optimal solution.
	\item \label{selection-criterion:cuts-are-generated}
		For at least one partial branch-and-bound tree, 
  			\begin{enumerate*}[label={(\alph*)},ref={{4(\alph*)}}]
		  		\item \label{selection-criterion:cuts-are-generated:not_lp=dlb=dub}
  					either the disjunctive lower bound, $c^\T \ref{p*}$, is strictly less than the maximum objective value of any leaf node, 
		  			or it is strictly greater than $c^\T \lpopt$,
		  		\item \label{selection-criterion:cuts-are-generated:PRLP-primal-feasible}
		  			the corresponding \eqref{PRLP} is primal feasible,
		  		\item \label{selection-criterion:cuts-are-generated:PRLP-time-limit}
		  			it is proved feasible within a one minute time limit,
		  			and
		  		\item \label{selection-criterion:cuts-are-generated:cuts-are-generated}
		  			at least one VPC is added to cut pool after processing according to Section~\ref{sec:cut-processing}.
	  		\end{enumerate*}
\end{enumerate}
Criterion~\ref{selection-criterion:cuts-are-generated}
is put in place because infeasible instantiations of \eqref{PRLP} typically occur when 
  $\max_{p \in \pointset} c^\T p = c^\T \ref{p*} = c^\T \lpopt$.
When we report gap closed, we will also filter by another condition, that $c^\T \ref{p*} > c^\T \lpopt$ from the 64-leaf partial tree,
as instances in which the disjunctive lower bound and optimal value of the LP relaxation coincide are not good candidates for evaluation by gap closed.
The branch and bound results only include those instances that are solved in under an hour with \Gurobi{} (either with or without VPCs).

We modify the instances \texttt{stein27} and \texttt{stein45} from the versions in MIPLIB; in particular, we remove a cardinality constraint enforcing a lower bound for the objective value, which is not present in the original formulation of the problem~\cite{FulNemTro74_set-covering}.

In total, there are 1,458 instances across the MIPLIB, CORAL, and NEOS sets.
Many of these are ultimately not considered.
Initially, we eliminate
	42 instances that have indicator constraints from MIPLIB 2017, and
	81 nonindicator instances that are infeasible or unbounded.
Then, from the 1,335 instances that are left, we eliminate
	349 instances with unknown IP optimal value.
We then remove
	45 instances that have no objective (feasibility instance).
From the 941 instances remaining,
	295 instances have either more than 20,000 rows or more than 20,000 columns,
so these are never preprocessed and removed from consideration, as we deemed it unlikely that the resulting presolved instance would satisfy Criterion~\ref{selection-criterion:max-instance-size}.
From the remaining 646 instances,
	53 are eliminated because of less than 0.001 integrality gap after presolve,
and
	another 153 are eliminated due to having too many rows or columns after presolve,
Further, 4 more instances (bley\_xs2, control20-5-10-5, ej, gen-ip016) encountered numerical issues.
For example, for bley\_xs2, Gurobi presolve declares the instance unbounded,
and for control20-5-10-5, Gurobi will declare the instance unbounded or infeasible depending on seed.
The above exclusions count neos-3661949-lesse, which appears to have no integrality gap after presolve, though the issue may be numerical as the optimal value found after presolve is 689,000,000, compared to the listed value 688,995,225.
This leaves 436 instances.

Of these 436 instances, Table~\ref{app:tab:discarded-instances} lists the 104 that were removed from consideration and the reason for removal.
As the table shows, one of the most common reasons for discarding an instance was that no cuts were generated for that instance due either to $\max_{p \in \pointset} c^\T p = c^\T \lpopt$ (the optimal value over the best leaf node equals the optimal value of the LP) or \eqref{PRLP} being primal infeasible.
The situations are treated together because the former anyway typically results in \eqref{PRLP} being infeasible,
though it also implies that our metric of gap closed is not reasonable for that instance.
There is potential for these instances to either generate inequalities that do not cut away $\lpopt$ or to try to find a different partial branch-and-bound tree,
but that has been beyond the scope of our investigation.
There are an additional 50 instances for which branch and bound results are not shown, because these instances were unable to be solved within an hour by \Gurobi{} (neither with or without cuts, averaged across all random seeds).
Finally, we comment that results were not obtained with instance neos-3734794-moppy because of a slight mismatch between the objective values computed for the disjunctive terms within Cbc versus outside of Cbc'
this issue could be resolved with loosening the associated tolerance, but we refrained from this manual adjustment for these experiments.

{\small
\centering
\renewcommand{\arraystretch}{0.99} %
% [inline block 1: 1 envs, 35886 chars -> data_tex | \begin{tabularx}{\linewidth}{@{}*{2}{l}*{6}{c}@{}} \caption{Instances not considered along with violated selection crite...]

}

\section{Additional results for partial branch-and-bound tree experiments}
\label{app:sec:extra-tables}

This section contains additional details for the experimental results, left out of the main text due to length.

Table~\ref{app:tab:gap-closed} shows the number of rows and columns for each of the \numgapinst{} instances used in the gap closed experiments, the number of cuts produced to yield the ``best'' VPC objective value (across partial tree sizes), and the percent gap closed for each instance.
Columns~2 and 3 give, for each instance, the number of constraints and variables after preprocessing.
The next two columns show the number of cuts generated.
Column~6 is the percent gap closed by GMICs when they are added to the LP relaxation.
Column~7 is the percent gap closed as implied by the disjunctive lower bound from the partial tree with 64 leaf nodes.
Column~8 is the percent gap closed by VPCs.
Column~9 is the best percent gap closed for each instance between the value with GMICs alone in column ``G'' and the value with VPCs alone in column ``V''.
Column~10 is the percent gap closed when GMICs and VPCs are used together.
Columns~11 and 12 show the percent gap closed by \Gurobi{} cuts from one round at the root, first without and then with VPCs added as user cuts.
Columns~13 and 14 show the same, but after the last round of cuts at the root.
The values in columns~11 and 13 are the maximum percent gap closed across seven random seeds.
The last two rows give the average and number of wins, reproducing the summary data in Table~\ref{tab:gap-closed-summary}.

Table~\ref{app:tab:bb} provides the time and number of nodes taken by each instance for the partial tree size per instance that led to the best outcome for Gurobi with VPCs across the six partial tree sizes tested.
Columns~2 and 3 show the number of terms and number of cuts that led to the time for ``V''.
The table is in increasing order by column~5, ``V''.

{\clearpage\footnotesize
\sisetup{
    table-alignment-mode = format,
    table-number-alignment = center,
    table-format = 3.2,
}
\setlength\LTleft{-81.97917pt}%
\setlength{\tabcolsep}{2.5pt} %
% [inline block 2: 2 envs, 78475 chars -> data_tex | \begin{tabularx}{1.2\textwidth}{ 	@{}l*{2}{S[table-format=4.0,table-auto-round,table-number-alignment=center]}...]

} %

\section{Alternative cut-generating sets and point-ray collections}
\label{sec:splits-and-crosses}

In this section, we briefly mention experiments with other choices for cut-generating sets (instead of those derived from partial branch-and-bound trees) and with potential refinements of the simple point-ray collection used for all of the previous results.
\textbf{Importantly, we report results from \emph{preliminary} experiments, with an old version of the code. Thus, the numbers in these tables will not correspond to the other experiments in the paper, even though many of the instances are the same.}

\subsection{Gap closed using ``multiple'' split and cross disjunctions}
Instead of using one large, multiterm disjunction, as we do above, the typical approach in the literature is to generate cuts from the union of several shallower disjunctions.
We report results on preliminary computational experiments to assess the strength of VPCs obtained from multiple split disjunctions or multiple $2$-branch (cross) disjunctions.
An alternative that we do not test but merits exploration in the future is that of several partial branch-and-bound trees produced from different branching strategies.

Let $\sigma \defas \{j \in \intvars \suchthat \lpopt_j \notin I\}$ be the set of indices of integer variables that take fractional values in $\lpopt$.
For each $k \in \sigma$, there is a corresponding elementary split disjunction $(x_k \le \floor{\lpopt_k}) \vee (x_k \ge \ceil{\lpopt_k})$.
We generate VPCs from each of the elementary split disjunctions applied to \ref{P}.
We call these ``multiple'' split cuts.
We also report on the strength of VPCs from the nonconvex $P_I$-free set corresponding to a union of two split disjunctions from pairs of indices in $\sigma$.

We experiment on a smaller set of instances (37 in total) in order to conserve computational resources, and we report only the percent gap closed (without testing the cuts' effect on branch and bound time).
The reason for both of these restrictions is that these early experiments strongly support the use of partial trees as the cut-generating set over multiple split or cross disjunctions.
The size restriction for the experiments in this section is at most 500 rows and 500 columns, and instances from MIPLIB 2017 were not considered.
The other instance selection criteria remain unchanged.
The limit on the number of cuts per split or cross disjunction is set as $\card{\sigma}$ (which is the same as the limit from each partial branch-and-bound tree).

{
\begin{table}
\centering
\caption{Summary on small instance set of average percent gap closed, average percent of GMICs and VPCs active at the optimum after adding both sets of cuts to \ref{P}, average ``cut ratio'' between number of VPCs and number of GMICs, and the shifted geometric mean (by 60) for generating VPCs for each partial tree size as well as on runs with multiple split and cross disjunctions.}
\label{tab:summary-splits-and-crosses}
\begin{adjustbox}{max width=1\textwidth}
\begin{tabular}{@{}l*{8}{S[table-format=2.2,table-auto-round,table-number-alignment=center]}S[table-format=3.2,table-auto-round,table-number-alignment=center]@{}}
\toprule
 	& 
	 {\makecell[c]{{G}\\ \mbox{}}} & 
	 {\makecell[c]{{V+G}\\ {(2)}}} &
	 {\makecell[c]{{V+G}\\ {(4)}}} &
	 {\makecell[c]{{V+G}\\ {(8)}}} &
	 {\makecell[c]{{V+G}\\ {(16)}}} &
	 {\makecell[c]{{V+G}\\ {(32)}}} &
	 {\makecell[c]{{V+G}\\ {(46)}}} &
	 {\makecell[c]{{V+G}\\ {(splits)}}} &
	 {\makecell[c]{{V+G}\\ {(crosses)}}} \\
\midrule
\% gap closed & 16.33 & 18.48 & 19.55 & 21.73 & 25.52 & 30.74 & 35.26 & 21.79 & 26.92 \\
\% active GMIC & &  50.40 & 49.68 & 45.01 & 40.11 & 35.37 & 36.90 & 45.65 & 35.82 \\
\% active VPC & & 22.09 & 22.59 & 30.57 & 29.81 & 29.26 & 27.90 & 14.19 & 10.45 \\
Cut ratio &  & 0.63 & 0.64 & 0.66 & 0.69 & 0.71 & 0.72 & 14.27 & 108.23 \\
Time (gmean) &  & 0.16 & 0.54 & 2.34 & 7.24 & 13.16 & 14.43 & 7.38 & 53.40 \\
\bottomrule
\end{tabular}
\end{adjustbox}
\end{table} %
}

Table~\ref{tab:summary-splits-and-crosses} has columns for ``G'' (GMICs), ``V+G ($\ell$)'' for $\ell \in \{2,4,8,16,32,64\}$ (VPCs used together with GMICs from a partial branch-and-bound tree with $\ell$ leaf nodes), and ``V+G (splits)'' and ``V+G (crosses)'' (values corresponding to using splits and crosses, respectively).
The rows give the average percent gap closed, the average percent of active GMICs and VPCs at the post-cut optimum, the ratio between the number of VPCs and number of GMICs, and the geometric mean (with a shift of 60) of the time needed to generate cuts (including the time to generate the partial trees and set up the point-ray collections).

As Table~\ref{tab:summary-splits-and-crosses} shows, 
the gap closed by VPCs from multiple split disjunctions is comparable to that from using a partial tree with 8 leaves,
while multiple cross disjunctions yield a gap closed similar to that from partial trees with 16 leaves.
However, when using splits and crosses, the number of VPCs is considerably larger than the number of GMICs,
and cut generation time is also on average much greater.
This data supports our conclusion that using partial branch-and-bound trees to generate disjunctions for our procedure is preferable to using multiple split or cross disjunctions.

\subsection{Tightening the \texorpdfstring{$\mathcal{V}$}{V}-polyhedral relaxation}
\label{sec:tightening}

We have seen in Figures~\ref{fig:non-facet} and \ref{fig:non-facet2} that using the relaxations $C^t$ for each term $t \in \mathcal{T}$ can limit the set of cuts that can be generated.
A natural question to consider is whether a different relaxation would lead to stronger cuts.

One approach, which we have not tested computationally, involves refining the relaxations of each disjunctive term to some $\tilde{C}^t \subseteq C^t$ for each $t \in \mathcal{T}$ such that the following condition is satisfied for all $t, t' \in \mathcal{T}$, $t \ne t'$:
  \[
    \tilde{C}^t \cap \{x \in \R^n \suchthat D^{t'} x \ge D^{t'}_0\} = \emptyset. %
  \]
This would avoid the type of problem shown in Figure~\ref{fig:non-facet2}.
The essential idea would involve activating hyperplanes, but the process is generally made simpler by the fact that the disjunctive inequalities in practice all take on a simple form (each is a bound on a variable).

A different idea is to keep a simple cone as the relaxation for each term of the disjunction, but to use a different cobasis as the origin.
Specifically, we can apply the VPC procedure based on Proposition~\ref{prop:VPC1}, not on $p^t$, but on a neighbor of $p^t$ obtained by pivoting along any edge of $P^t$.
We tested this procedure on VPCs generated from the set of elementary split disjunctions.
The results were negative, in the sense that only a small additional percent gap was closed, whereas the extra computational expense involved was significant.
Our interpretation of this outcome is that the VPCs from simple cones contain the vast majority of the cuts 
that affect the objective function value and are obtainable from each elementary split.
This was in fact a primary motivation for pursuing more complicated disjunctions for cut generation.

\section{Complete tables for experiments with other cut-generating sets}

The tables in this section show that, overall, using partial branch-and-bound trees leads to comparable or better percent gap closed than generating cuts from multiple split or cross disjunctions (Table~\ref{app:tab:gap-closed-splits-and-crosses}), in less time (Table~\ref{app:tab:time-splits-and-crosses}) and with fewer cuts (Table~\ref{app:tab:num-cuts-splits-and-crosses}).
A summary of these tables appeared in Table~\ref{tab:summary-splits-and-crosses}.

The columns of Table~\ref{app:tab:gap-closed-splits-and-crosses} give the following information for each instance.
Columns~2 and 3 give the dimensions of the instance. 
Column~4 gives the number of GMICs generated 
(one for each elementary split on an integer variable fractional at $\lpopt$).
Column~5 contains the number of VPCs generated, 
while the next column (6) specifies the number of cuts that are active, 
i.e., tight, at the optimum of the LP after adding the cuts. 
Finally, columns~7 and 8 give the percentage of the integrality gap closed by the GMICs 
and the VPCs. 
The last column (9) gives the time used for generating the cuts.

\begin{table}
\centering
\caption{Comparison of percent gap closed by VPCs from partial branch-and-bound trees to using multiple split or cross disjunctions.}
\label{app:tab:gap-closed-splits-and-crosses}
\begin{adjustbox}{max width=1\textwidth}
\centering
\begin{tabular}{@{}l*{10}{S[table-format=2.2,table-auto-round,table-number-alignment=center]}@{}}
\toprule
	 {\makecell[l]{{Instance}\\ \mbox{}}} & 
	 {\makecell[c]{{G}\\ \mbox{}}} & 
	 {\makecell[c]{{V+G}\\ {(2)}}} &
	 {\makecell[c]{{V+G}\\ {(4)}}} &
	 {\makecell[c]{{V+G}\\ {(8)}}} &
	 {\makecell[c]{{V+G}\\ {(16)}}} &
	 {\makecell[c]{{V+G}\\ {(32)}}} &
	 {\makecell[c]{{V+G}\\ {(46)}}} &
	 {\makecell[c]{{V+G}\\ {(best)}}} &
	 {\makecell[c]{{V+G}\\ {(splits)}}} &
	 {\makecell[c]{{V+G}\\ {(crosses)}}} \\
\midrule
23588 & 5.8 & 17.3 & 28.7 & 35.7 & 47.8 & 60.9 & 71.8 & 71.8 & 28.7 & 39.8 \\
bell3a & 37.0 & 37.0 & 37.0 & 37.0 & 43.6 & 43.6 & 43.6 & 43.6 & 37.0 & 43.9 \\
bell3b & 31.2 & 84.1 & 84.1 & 84.1 & 84.5 & 84.5 & 84.7 & 84.7 & 51.4 & 84.3 \\
bell4 & 25.6 & 25.6 & 25.8 & 26.1 & 26.0 & 26.1 & 26.4 & 26.4 & 25.6 & 29.9 \\
bell5 & 13.8 & 13.8 & 13.8 & 25.7 & 25.6 & 77.8 & 62.9 & 77.8 & 14.8 & 22.7 \\
blend2 & 5.5 & 5.5 & 7.5 & 12.1 & 13.1 & 15.5 & 23.0 & 23.0 & 7.1 & 8.9 \\
bm23 & 16.8 & 18.0 & 19.8 & 19.8 & 39.5 & 56.2 & 70.9 & 70.9 & 18.0 & 20.2 \\
glass4 & 0.0 & 0.0 & 0.0 & 0.0 & 0.0 & 0.0 & 0.0 & 0.0 & 0.0 & 0.0 \\
go19 & 2.0 & 2.5 & 3.7 & 5.6 & 9.8 & 13.1 & 2.0 & 13.1 & 8.5 & 5.8 \\
gt2 & 87.1 & 87.1 & 87.1 & 87.1 & 87.1 & 87.1 & 87.1 & 87.1 & 87.1 & 87.1 \\
k16x240 & 11.4 & 11.4 & 12.9 & 12.8 & 16.9 & 17.7 & 22.5 & 22.5 & 13.5 & 16.4 \\
lseu & 5.8 & 6.1 & 6.1 & 6.1 & 10.1 & 11.4 & 19.2 & 19.2 & 6.2 & 9.1 \\
mas074 & 6.7 & 6.7 & 7.2 & 7.9 & 9.0 & 11.1 & 13.4 & 13.4 & 6.9 & 7.2 \\
mas076 & 6.4 & 6.4 & 6.5 & 6.4 & 6.8 & 8.9 & 13.0 & 13.0 & 6.4 & 6.5 \\
mas284 & 0.9 & 1.8 & 8.7 & 12.4 & 15.6 & 25.3 & 33.8 & 33.8 & 3.3 & 10.6 \\
mik-250-1-100-1 & 53.5 & 53.5 & 53.5 & 53.5 & 53.5 & 53.5 & 53.5 & 53.5 & 53.5 & 53.5 \\
misc03 & 0.0 & 0.0 & 0.0 & 5.4 & 10.5 & 17.4 & 44.3 & 44.3 & 0.0 & 4.1 \\
misc07 & 0.7 & 0.7 & 0.7 & 0.7 & 0.9 & 1.9 & 5.5 & 5.5 & 0.7 & 0.7 \\
mod008 & 24.4 & 24.4 & 24.4 & 25.8 & 25.5 & 26.7 & 27.8 & 27.8 & 24.4 & 24.4 \\
mod013 & 5.9 & 9.0 & 9.9 & 20.6 & 21.1 & 36.9 & 47.4 & 47.4 & 9.0 & 11.6 \\
modglob & 18.1 & 18.5 & 18.8 & 19.6 & 18.9 & 18.4 & 18.6 & 19.6 & 30.3 & 44.8 \\
neos-1420205 & 0.0 & 0.0 & 0.0 & 0.0 & 0.0 & 11.4 & 14.7 & 14.7 & 0.0 & 0.4 \\
neos5 & 11.1 & 11.1 & 11.1 & 18.8 & 21.9 & 29.2 & 37.5 & 37.5 & 11.2 & 15.7 \\
neos-880324 & 0.0 & 0.0 & 0.0 & 0.0 & 0.0 & 0.0 & 36.3 & 36.3 & 8.5 & 16.9 \\
p0282 & 3.2 & 3.2 & 3.2 & 3.4 & 6.4 & 7.3 & 10.6 & 10.6 & 65.2 & 77.9 \\
pipex & 35.6 & 35.6 & 35.7 & 35.7 & 35.6 & 35.6 & 36.7 & 36.7 & 35.6 & 35.6 \\
pp08aCUTS & 33.8 & 33.8 & 34.9 & 34.9 & 35.7 & 34.6 & 33.8 & 35.7 & 39.9 & 51.9 \\
pp08a & 54.5 & 54.5 & 54.5 & 54.5 & 54.5 & 54.5 & 54.5 & 54.5 & 56.1 & 70.3 \\
probportfolio & 4.6 & 4.8 & 5.4 & 5.7 & 7.0 & 8.4 & 11.3 & 11.3 & 14.7 & 13.5 \\
prod1 & 4.7 & 5.1 & 8.3 & 12.2 & 18.6 & 29.6 & 36.3 & 36.3 & 9.7 & 22.4 \\
rgn & 8.0 & 8.0 & 8.0 & 8.0 & 21.6 & 33.8 & 45.4 & 45.4 & 8.0 & 8.1 \\
roy & 4.5 & 7.1 & 10.4 & 11.3 & 15.8 & 24.3 & 30.3 & 30.3 & 7.4 & 9.3 \\
sentoy & 19.3 & 19.3 & 20.0 & 27.5 & 45.9 & 55.0 & 59.8 & 59.8 & 21.3 & 22.9 \\
stein27\_nocard & 8.3 & 13.1 & 17.9 & 29.5 & 44.4 & 50.0 & 59.3 & 59.3 & 28.5 & 36.4 \\
timtab1 & 24.1 & 24.1 & 24.1 & 24.1 & 24.1 & 24.1 & 24.1 & 24.1 & 27.2 & 34.1 \\
vpm1 & 15.7 & 15.7 & 15.7 & 15.7 & 27.6 & 26.2 & 22.8 & 27.6 & 16.5 & 17.3 \\
vpm2 & 18.2 & 18.6 & 18.2 & 18.6 & 19.0 & 19.3 & 20.1 & 20.1 & 23.8 & 31.8 \\
\midrule
Average & 16.33 & 18.48 & 19.55 & 21.73 & 25.52 & 30.74 & 35.26 & 36.17 & 21.79 & 26.92 \\ \bottomrule
\end{tabular}
\end{adjustbox}
\end{table} %

{
\small
\begin{table}
\small
\centering
\caption{Time to generate VPCs for each of the different partial branch-and-bound tree sizes and for multiple split and cross disjunctions.}
\label{app:tab:time-splits-and-crosses}
\begin{adjustbox}{max width=1\textwidth}
\centering
\begin{tabular}{@{}l
	*{2}{S[table-format=1.2,table-auto-round,table-number-alignment=center]}
	*{1}{S[table-format=2.2,table-auto-round,table-number-alignment=center]}
	*{5}{S[table-format=3.2,table-auto-round,table-number-alignment=center]}
	@{}
}
\toprule
	 {\makecell[l]{{Instance}\\ \mbox{}}} & 
	 {\makecell[c]{{V}\\ {(2)}}} &
	 {\makecell[c]{{V}\\ {(4)}}} &
	 {\makecell[c]{{V}\\ {(8)}}} &
	 {\makecell[c]{{V}\\ {(16)}}} &
	 {\makecell[c]{{V}\\ {(32)}}} &
	 {\makecell[c]{{V}\\ {(46)}}} &
	 {\makecell[c]{{V}\\ {(splits)}}} &
	 {\makecell[c]{{V}\\ {(crosses)}}} \\
\midrule
23588 & 0.5 & 2.3 & 8.8 & 23.9 & 77.5 & 224.8 & 36.2 & 912.9 \\
bell3a & 0.0 & 0.0 & 0.0 & 0.0 & 0.1 & 0.2 & 0.0 & 0.1 \\
bell3b & 0.0 & 0.0 & 0.0 & 0.1 & 0.1 & 0.2 & 0.1 & 2.4 \\
bell4 & 0.0 & 0.0 & 0.0 & 0.1 & 0.1 & 0.2 & 0.1 & 2.6 \\
bell5 & 0.0 & 0.0 & 0.0 & 0.0 & 0.1 & 0.1 & 0.1 & 0.3 \\
blend2 & 0.1 & 0.2 & 0.2 & 0.5 & 1.6 & 4.3 & 0.6 & 6.9 \\
bm23 & 0.0 & 0.0 & 0.0 & 0.1 & 0.1 & 0.3 & 0.1 & 0.2 \\
glass4 & 0.0 & 0.1 & 0.1 & 0.3 & 0.5 & 1.6 & 1.5 & 100.8 \\
go19 & 1.6 & 7.5 & 52.1 & 314.4 & 906.3 & 71.7 & 619.5 & 926.5 \\
gt2 & 0.0 & 0.0 & 0.0 & 0.2 & 0.4 & 0.9 & 0.2 & 2.9 \\
k16x240 & 0.1 & 0.1 & 0.3 & 0.7 & 1.6 & 3.5 & 0.7 & 8.8 \\
lseu & 0.0 & 0.0 & 0.0 & 0.1 & 0.2 & 0.4 & 0.1 & 1.0 \\
mas074 & 0.0 & 0.1 & 0.2 & 0.6 & 1.4 & 4.4 & 0.4 & 4.8 \\
mas076 & 0.0 & 0.1 & 0.3 & 0.8 & 1.9 & 4.2 & 0.4 & 4.8 \\
mas284 & 0.1 & 0.3 & 0.8 & 4.1 & 13.5 & 52.2 & 1.7 & 48.8 \\
mik-250-1-100-1 & 0.1 & 0.1 & 0.2 & 0.4 & 2.1 & 9.3 & 3.8 & 327.5 \\
misc03 & 0.1 & 0.1 & 0.5 & 1.1 & 3.2 & 8.7 & 0.9 & 20.1 \\
misc07 & 0.1 & 0.3 & 0.9 & 3.0 & 13.0 & 33.3 & 1.3 & 29.5 \\
mod008 & 0.1 & 0.1 & 0.2 & 0.4 & 1.2 & 2.9 & 0.2 & 1.2 \\
mod013 & 0.0 & 0.0 & 0.0 & 0.1 & 0.2 & 0.4 & 0.1 & 0.2 \\
modglob & 0.1 & 0.2 & 0.4 & 1.2 & 1.9 & 3.4 & 0.8 & 18.6 \\
neos-1420205 & 0.2 & 0.3 & 0.3 & 0.9 & 4.1 & 7.1 & 2.4 & 170.8 \\
neos5 & 0.1 & 0.2 & 0.4 & 0.7 & 1.8 & 4.9 & 1.4 & 114.6 \\
neos-880324 & 0.1 & 0.6 & 0.4 & 1.3 & 15.3 & 20.1 & 2.2 & 267.1 \\
p0282 & 0.0 & 0.1 & 0.1 & 0.4 & 0.7 & 1.6 & 0.5 & 10.9 \\
pipex & 0.0 & 0.0 & 0.0 & 0.1 & 0.1 & 0.2 & 0.0 & 0.2 \\
pp08aCUTS & 0.1 & 0.1 & 0.2 & 0.8 & 2.9 & 17.6 & 3.7 & 184.6 \\
pp08a & 0.0 & 0.0 & 0.1 & 0.2 & 0.4 & 1.1 & 0.9 & 39.1 \\
probportfolio & 2.3 & 7.4 & 41.2 & 264.7 & 682.7 & 903.3 & 88.5 & 907.2 \\
prod1 & 0.1 & 0.3 & 1.2 & 2.7 & 6.1 & 11.8 & 2.0 & 218.3 \\
rgn & 0.0 & 0.1 & 0.2 & 0.3 & 1.4 & 2.5 & 0.4 & 7.4 \\
roy & 0.0 & 0.0 & 0.1 & 0.2 & 0.3 & 0.6 & 0.2 & 1.4 \\
sentoy & 0.0 & 0.0 & 0.1 & 0.2 & 0.4 & 0.9 & 0.1 & 0.7 \\
stein27\_nocard & 0.0 & 0.1 & 0.1 & 0.2 & 0.4 & 0.5 & 0.4 & 19.2 \\
timtab1 & 0.0 & 0.0 & 0.1 & 0.2 & 0.3 & 0.7 & 2.3 & 339.6 \\
vpm1 & 0.0 & 0.0 & 0.1 & 0.2 & 0.3 & 0.7 & 0.1 & 1.1 \\
vpm2 & 0.0 & 0.0 & 0.1 & 0.2 & 0.4 & 0.8 & 0.5 & 8.9 \\
\midrule
Gmean & 0.16 & 0.54 & 2.34 & 7.24 & 13.16 & 14.43 & 7.38 & 53.40 \\ \bottomrule
\end{tabular}
\end{adjustbox}
\end{table} %
}

\begin{table}
\centering
\caption{Number of rows, columns, GMICs, and VPCs for small instances used to test multiple split and cross disjunctions. The last row gives the average ratio of number of VPCs as a fraction of the number of GMICs.}
\label{app:tab:num-cuts-splits-and-crosses}
\begin{adjustbox}{max width=1\textwidth}
\setlength\LTleft{-6.86802 pt}%
\centering
\begin{tabular}{@{}%
	l*{9}{S[table-format=3.0,table-auto-round,table-number-alignment=center]}
	S[table-format=5.0,table-auto-round,table-number-alignment=center]
	S[table-format=5.0,table-auto-round,table-number-alignment=center]
	@{}
}
\toprule
& & & \multicolumn{9}{c}{\# cuts} \\
\cmidrule{4-12}
Instance & {Rows} & {Cols} & {G} &
	 {\makecell[c]{{V}\\ {(2)}}} &
	 {\makecell[c]{{V}\\ {(4)}}} &
	 {\makecell[c]{{V}\\ {(8)}}} &
	 {\makecell[c]{{V}\\ {(16)}}} &
	 {\makecell[c]{{V}\\ {(32)}}} &
	 {\makecell[c]{{V}\\ {(46)}}} &
	 {\makecell[c]{{V}\\ {(splits)}}} &
	 {\makecell[c]{{V}\\ {(crosses)}}} \\
\midrule
23588 & 137 & 237 & 75 & 36 & 75 & 75 & 75 & 75 & 75 & 4890 & 32920 \\
bell3a & 63 & 82 & 7 & 1 & 5 & 7 & 4 & 5 & 7 & 4 & 19 \\
bell3b & 73 & 91 & 24 & 3 & 3 & 7 & 21 & 25 & 25 & 63 & 328 \\
bell4 & 73 & 88 & 27 & 3 & 4 & 8 & 5 & 7 & 9 & 22 & 178 \\
bell5 & 34 & 56 & 10 & 3 & 2 & 5 & 4 & 10 & 10 & 19 & 82 \\
blend2 & 154 & 302 & 13 & 13 & 13 & 13 & 13 & 13 & 13 & 117 & 840 \\
bm23 & 20 & 27 & 6 & 6 & 6 & 6 & 6 & 6 & 6 & 36 & 90 \\
glass4 & 392 & 317 & 72 & 2 & 4 & 10 & 14 & 0 & 72 & 142 & 906 \\
go19 & 361 & 361 & 357 & 97 & 228 & 357 & 357 & 175 & 0 & 35760 & 22557 \\
gt2 & 28 & 173 & 14 & 3 & 4 & 3 & 7 & 2 & 1 & 21 & 242 \\
k16x240 & 256 & 480 & 14 & 14 & 9 & 14 & 14 & 14 & 14 & 166 & 977 \\
lseu & 28 & 79 & 9 & 9 & 9 & 9 & 9 & 9 & 9 & 81 & 280 \\
mas074 & 13 & 148 & 12 & 12 & 12 & 12 & 12 & 12 & 12 & 144 & 779 \\
mas076 & 12 & 148 & 11 & 11 & 8 & 4 & 10 & 11 & 11 & 112 & 497 \\
mas284 & 68 & 148 & 20 & 20 & 20 & 20 & 6 & 9 & 4 & 387 & 3607 \\
mik-250-1-100-1 & 100 & 251 & 100 & 1 & 2 & 8 & 13 & 74 & 100 & 1 & 100 \\
misc03 & 95 & 138 & 18 & 18 & 18 & 18 & 18 & 18 & 18 & 310 & 2616 \\
misc07 & 211 & 232 & 16 & 16 & 16 & 16 & 16 & 16 & 16 & 256 & 1901 \\
mod008 & 6 & 319 & 6 & 6 & 6 & 3 & 6 & 6 & 6 & 30 & 88 \\
mod013 & 62 & 96 & 5 & 5 & 5 & 5 & 5 & 5 & 5 & 23 & 50 \\
modglob & 286 & 354 & 29 & 21 & 25 & 29 & 29 & 29 & 29 & 209 & 1049 \\
neos-1420205 & 341 & 231 & 44 & 40 & 44 & 0 & 0 & 2 & 1 & 613 & 9756 \\
neos5 & 63 & 63 & 35 & 25 & 35 & 2 & 1 & 1 & 1 & 719 & 8989 \\
neos-880324 & 182 & 135 & 45 & 10 & 7 & 0 & 0 & 0 & 11 & 543 & 17738 \\
p0282 & 160 & 200 & 24 & 13 & 10 & 24 & 24 & 24 & 24 & 181 & 1697 \\
pipex & 25 & 48 & 6 & 6 & 6 & 6 & 6 & 6 & 6 & 19 & 42 \\
pp08aCUTS & 239 & 235 & 46 & 6 & 7 & 3 & 5 & 11 & 46 & 924 & 12766 \\
pp08a & 133 & 234 & 53 & 4 & 5 & 4 & 4 & 2 & 2 & 246 & 1145 \\
probportfolio & 302 & 320 & 105 & 105 & 105 & 105 & 105 & 105 & 48 & 6157 & 9432 \\
prod1 & 75 & 117 & 40 & 18 & 40 & 40 & 40 & 40 & 40 & 1184 & 30508 \\
rgn & 24 & 180 & 18 & 16 & 18 & 18 & 18 & 18 & 3 & 222 & 1606 \\
roy & 147 & 139 & 11 & 7 & 6 & 11 & 11 & 11 & 11 & 104 & 368 \\
sentoy & 30 & 60 & 8 & 8 & 8 & 8 & 8 & 8 & 8 & 64 & 224 \\
stein27\_nocard & 117 & 27 & 27 & 7 & 11 & 27 & 1 & 2 & 1 & 163 & 6102 \\
timtab1 & 165 & 365 & 128 & 5 & 2 & 3 & 4 & 2 & 2 & 557 & 13566 \\
vpm1 & 128 & 188 & 11 & 11 & 0 & 5 & 11 & 9 & 11 & 56 & 104 \\
vpm2 & 127 & 187 & 24 & 24 & 7 & 13 & 24 & 24 & 24 & 247 & 1209 \\ 
\midrule
Avg (cut ratio) & & & & 0.63 & 0.64 & 0.66 & 0.69 & 0.71 & 0.72 & 14.27 & 108.23 \\
\bottomrule
\end{tabular}
\end{adjustbox}
\end{table} %

\end{document}